\pdfoutput=1 

\documentclass[11pt]{amsart}

\usepackage{amssymb,amsthm,enumitem,colonequals,tikz-cd,microtype,comment}
\usepackage[normalem]{ulem} 

\usepackage[osf]{Baskervaldx}
\usepackage[baskervaldx]{newtxmath}
\usepackage[cal=boondoxo]{mathalfa} 

\usepackage[top=3.75cm, bottom=3cm, left=3.5cm, right=3.5cm]{geometry}

\interfootnotelinepenalty=10000  

\usepackage{xcolor}
\colorlet{darkblue}{blue!55!black}
\colorlet{darkcyan}{cyan!50!black}
\colorlet{darkgreen}{green!60!black} 

\PassOptionsToPackage{hyphens}{url}
\usepackage{hyperref}
\hypersetup{
    colorlinks=true,
    linkcolor=darkblue,
    urlcolor=darkcyan,
    citecolor=darkgreen,
}

\def\eqref#1{\textcolor{darkblue}{(\ref{#1})}}

\usepackage[nameinlink]{cleveref} 
\Crefformat{section}{#2\S#1#3}
\Crefmultiformat{section}{#2\S\S#1#3}{ and~#2#1#3}{, #2#1#3}{, and~#2#1#3}

\usepackage[pagewise]{lineno}
\overfullrule=100pt 

\let\oldequation\equation
\let\oldendequation\endequation

\expandafter\let\expandafter\oldequationstar\csname equation*\endcsname
\expandafter\let\expandafter\oldendequationstar\csname endequation*\endcsname
\renewenvironment{equation*}{\linenomathNonumbers\oldequationstar}{\oldendequationstar\endlinenomath}
\let\oldalign\align
\let\oldendalign\endalign

\expandafter\let\expandafter\oldalignstar\csname align*\endcsname
\expandafter\let\expandafter\oldendalignstar\csname endalign*\endcsname
\renewenvironment{align*}{\linenomathNonumbers\oldalignstar}{\oldendalignstar\endlinenomath}

\theoremstyle{plain}
\newtheorem{theorem}{Theorem}[section]
\newtheorem{lemma}[theorem]{Lemma}
\newtheorem{corollary}[theorem]{Corollary}
\newtheorem{proposition}[theorem]{Proposition}

\theoremstyle{definition}
\newtheorem{definition}[theorem]{Definition}
\newtheorem{example}[theorem]{Example}
\newtheorem{remark}[theorem]{Remark}
\newtheorem{hypothesis}[theorem]{Hypothesis}
\newtheorem{notation}[theorem]{Notation}
\newtheorem{reminder}[theorem]{Reminder}

\newtheorem*{ack}{Acknowledgments}

\AddToHook{env/conjecture/begin}{\crefalias{theorem}{conjecture}}
\AddToHook{env/lemma/begin}{\crefalias{theorem}{lemma}}
\AddToHook{env/corollary/begin}{\crefalias{theorem}{corollary}}
\AddToHook{env/proposition/begin}{\crefalias{theorem}{proposition}}
\AddToHook{env/definition/begin}{\crefalias{theorem}{definition}}
\AddToHook{env/remark/begin}{\crefalias{theorem}{remark}}
\AddToHook{env/example/begin}{\crefalias{theorem}{example}}
\AddToHook{env/hypothesis/begin}{\crefalias{theorem}{hypothesis}}
\AddToHook{env/notation/begin}{\crefalias{theorem}{notation}}
\AddToHook{env/reminder/begin}{\crefalias{theorem}{reminder}}

\setcounter{tocdepth}{1}
\setcounter{secnumdepth}{3}
\numberwithin{equation}{section}
\numberwithin{theorem}{section}


\title[Frobenius generation for algebraic stacks]{Frobenius generation for algebraic stacks}

\author[P.~Lank]{Pat Lank}
\address{P.~Lank,
Dipartimento di Matematica “F. Enriques”, Universit\`{a} degli Studi di Milano, Via Cesare
Saldini 50, 20133 Milano, Italy}
\email{plankmathematics@gmail.com}

\author[F. ~Peng]{Fei Peng}
\address{F.~Peng,
School of Mathematics \& Statistics\\
The University of Melbourne\\
Parkville, VIC, 3010\\
Australia}
\email{pengf2@student.unimelb.edu.au}

\date{\today}

\keywords{Frobenius pushforward, generators, derived categories, algebraic stacks}

\subjclass[2020]{14A30 (primary), 14A20, 13A35, 14F08} 





\begin{document}
    
\begin{abstract}
    We introduce a notion of $F$-finiteness for algebraic stacks in positive characteristic. 
    Our main result shows that sufficiently many Frobenius pushforwards generate the bounded derived categories of coherent sheaves on Noetherian concentrated $F$-finite algebraic stacks with quasi-finite and separated diagonal. 
    This generalizes, and independently recovers, a result of Ballard--Iyengar--Lank--Mukhopadhyay--Pollitz.
\end{abstract}

\maketitle

\tableofcontents

\section{Introduction}
\label{sec:intro}

This work is concerned with explicit generators of the bounded derived category $D^b_{\operatorname{coh}}(\mathcal{X})$ of a Noetherian algebraic $\mathbb{F}_p$-stack $\mathcal{X}$. 
While the existence of classical and strong generators is now well understood for algebraic stacks, comparatively few constructions explicitly identify generators (see e.g.\ \cite{Hanlon/Hicks/Lazarev:2024}).
Our work addresses this gap.

\subsection{What is known}
\label{sec:intro_known}

The notion of generation in a triangulated category $\mathcal{T}$ was introduced in \cite{Bondal/VandenBergh:2003}. 
Roughly speaking, an object $G \in \mathcal{T}$ is called a \textit{classical generator} if every object of $\mathcal{T}$ can be obtained from $G$ using only finite direct sums, direct summands, and cones. 
Furthermore, if there exists an integer $n \geq 0$ such that this process can be completed using at most $n+1$ cones, then $G$ is called a \textit{strong generator}.

Generation techniques allow one to study objects in a triangulated category by means of a single object. 
The importance of these techniques has been demonstrated in various contexts such as commutative algebra, representation theory, and more recently geometry. This includes: representability theorems for cohomological functors when strong generators exist \cite{Rouquier:2008}, applications to singularity theory \cite{Lank/Venkatesh:2025,Lank/McDonald/Venkatesh:2025,DeDeyn/Lank/Lank/ManaliRahul/Venkatesh:2026}, and the resolution of open conjectures in algebraic geometry \cite{Neeman:2021a,Neeman:2022}. 

In the setting of schemes, the existence of generators for $D^b_{\operatorname{coh}}$ has become more well-understood, see e.g.\ \cite{Bondal/VandenBergh:2003,Rouquier:2008,Keller/Murfet/VanDenBergh:2009,Lunts:2010,Elagin/Lunts/Schnurer:2020,Iyengar/Takahashi:2016,Aoki:2021}. 
Recently, the existence of generators for $D^b_{\operatorname{coh}}$ has been established generally for algebraic stacks \cite{Hall/Priver:2024,DeDeyn/Lank/ManaliRahul:2025,DeDeyn/Lank/ManaliRahul/Peng:2025}. 
However, it remains elusive to explicitly identify generators. 

In positive characteristic, \cite{Ballard/Iyengar/Lank/Mukhopadhyay/Pollitz:2023} recently provided a construction of explicit generators. Recall that a Noetherian scheme of positive characteristic is \textit{$F$-finite} if its Frobenius morphism is finite. For example, every Noetherian scheme locally of finite type over a perfect field of positive characteristic is $F$-finite. 

It was shown in \cite{Ballard/Iyengar/Lank/Mukhopadhyay/Pollitz:2023} that on an $F$-finite scheme, the Frobenius pushforward of a compact generator becomes a classical generator for $D^b_{\operatorname{coh}}$ after sufficiently many iterations. 
This provides a categorical explanation of the fact that the Frobenius morphism on schemes detects singularities. 
It also recovers a regularity criterion of Kunz \cite{Kunz:1969}.

\subsection{What we do}
\label{sec:intro_what_we_do}

In this article, we extend the main result of \cite{Ballard/Iyengar/Lank/Mukhopadhyay/Pollitz:2023} to the case of algebraic stacks. 
In fact, our methods for the main result (see \Cref{introthm:Frobenius_generation}) are independent of loc.\ cit.
This is explained more carefully later.

Currently, $F$-finiteness for algebraic stacks has not been defined, nor studied. 
To this end, we propose the following analog: We say an algebraic stack $\mathcal{X}$ is \textit{$F$-finite} if the Frobenius morphism on $\mathcal{X}$ is concentrated and locally of finite type.
By \Cref{ex:DM_F-finite}, $F$-finiteness occurs for all Deligne--Mumford stacks or concentrated algebraic stacks locally of finite type over a perfect field of positive characteristic.

Along the way towards our main results, we provide a characterization of $F$-finiteness for classifying stacks of group schemes. 
We believe this would be of independent interest. 
See \Cref{sec:finiteness_F-finiteness} for more details. 

\subsubsection{Frobenius generation}
\label{sec:intro_what_we_do_frob_gen}

Now we state our main result:

\begin{theorem}
    \label{introthm:Frobenius_generation}
    Let $\mathcal{X}$ be a Noetherian $F$-finite algebraic stack with separated quasi-finite diagonal. 
    If $\mathcal{X}$ is concentrated, then for any $Z\subseteq |\mathcal{X}|$ closed and $e \gg 0$, there is a $G\in \operatorname{Perf}_Z (\mathcal{X})$ such that $\mathbf{R} F_\ast^e G$ is classical generator for $D^b_{\operatorname{coh},Z}(\mathcal{X})$.
\end{theorem}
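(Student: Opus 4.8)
The plan is to bootstrap from the scheme‐level result \Cref{prop:relative_Frobenius_generation_schemes} by a dévissage over $\mathcal{X}$, after first rephrasing the theorem in terms of a thick subcategory. Since $\mathcal{X}$ is concentrated, $\operatorname{Perf}_Z(\mathcal{X})$ admits a classical generator $G$ (Hall--Rydh, and the generator existence cited in \Cref{sec:intro}), and it is the subcategory of compact objects of $D_{\mathrm{qc},Z}(\mathcal{X})$. As $F$ is proper and $\mathcal{X}$ concentrated, $\mathbf{R}F^e_\ast$ is an exact endofunctor of $D^b_{\operatorname{coh}}(\mathcal{X})$, and because the absolute Frobenius is a universal homeomorphism it preserves cohomological supports, hence restricts to $D^b_{\operatorname{coh},Z}(\mathcal{X})$. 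Exactness gives $\mathbf{R}F^e_\ast\bigl(\operatorname{Perf}_Z(\mathcal{X})\bigr)\subseteq\langle\mathbf{R}F^e_\ast G\rangle$, so it suffices to prove that $D^b_{\operatorname{coh},Z}(\mathcal{X})=\langle\mathbf{R}F^e_\ast(\operatorname{Perf}_Z(\mathcal{X}))\rangle$ for all $e\gg0$ (the reverse inclusion being automatic since $\mathbf{R}F^e_\ast G\in D^b_{\operatorname{coh},Z}(\mathcal{X})$), and then take the required object to be this $G$.

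\textbf{Step 2 (dévissage).} I would argue by Noetherian induction on the closed set $Z$, the case $Z=\varnothing$ being trivial. Using the local structure theory of Deligne--Mumford stacks (Rydh; Keel--Mori) — étale‐locally on its coarse space such a stack is a quotient $[\operatorname{Spec}(A)/\Gamma]$ by a finite group, with the atlas $\operatorname{Spec}(A)\to[\operatorname{Spec}(A)/\Gamma]$ finite locally free and surjective — one produces a dense open substack $j\colon\mathcal{U}\hookrightarrow\mathcal{X}$ meeting every generic point of $Z$, together with a scheme $V$ and a finite, faithfully flat, surjective morphism $f\colon V\to\mathcal{U}$. Then naturality of the absolute Frobenius gives $F_{\mathcal{U}}\circ f=f\circ F_V$, hence $\mathbf{R}f_\ast\circ\mathbf{R}F^e_{V\ast}=\mathbf{R}F^e_{\mathcal{U}\ast}\circ\mathbf{R}f_\ast$; flatness makes $\mathbf{R}f_\ast$ carry $\operatorname{Perf}_{f^{-1}Z}(V)$ into $\operatorname{Perf}_{Z\cap|\mathcal{U}|}(\mathcal{U})$; and $f$ being finite and surjective gives $\langle\mathbf{R}f_\ast D^b_{\operatorname{coh},f^{-1}Z}(V)\rangle=D^b_{\operatorname{coh},Z\cap|\mathcal{U}|}(\mathcal{U})$. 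Feeding \Cref{prop:relative_Frobenius_generation_schemes}, applied to $V$ with the closed set $f^{-1}(Z)$, into these compatibilities yields $D^b_{\operatorname{coh},Z\cap|\mathcal{U}|}(\mathcal{U})=\langle\mathbf{R}F^e_{\mathcal{U}\ast}(\operatorname{Perf}_{Z\cap|\mathcal{U}|}(\mathcal{U}))\rangle$ for $e\gg0$.

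\textbf{Step 3 (gluing).} The restriction $j^\ast$ realizes $D^b_{\operatorname{coh},Z\cap|\mathcal{U}|}(\mathcal{U})$ as the Verdier localization of $D^b_{\operatorname{coh},Z}(\mathcal{X})$ at $D^b_{\operatorname{coh},Z\setminus|\mathcal{U}|}(\mathcal{X})$ (up to direct summands), perfect complexes with prescribed supports extend from $\mathcal{U}$ to $\mathcal{X}$ (Thomason--Trobaugh for stacks, à la Hall--Rydh), and flat base change identifies $j^\ast\circ\mathbf{R}F^e_\ast$ with $\mathbf{R}F^e_\ast\circ j^\ast$. The standard lifting lemma for Verdier quotients then gives $D^b_{\operatorname{coh},Z}(\mathcal{X})=\langle\,\mathbf{R}F^e_\ast(\operatorname{Perf}_Z(\mathcal{X}))\ \cup\ D^b_{\operatorname{coh},Z\setminus|\mathcal{U}|}(\mathcal{X})\,\rangle$. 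Since $\mathcal{U}$ meets every generic point of $Z$, the closed set $Z\setminus|\mathcal{U}|$ is strictly smaller, so the inductive hypothesis identifies $D^b_{\operatorname{coh},Z\setminus|\mathcal{U}|}(\mathcal{X})$ with $\langle\mathbf{R}F^e_\ast(\operatorname{Perf}_{Z\setminus|\mathcal{U}|}(\mathcal{X}))\rangle\subseteq\langle\mathbf{R}F^e_\ast(\operatorname{Perf}_Z(\mathcal{X}))\rangle$, closing the induction; a single threshold for $e$ works since the scheme‐level statement holds for all large $e$ and the compatibilities above are monotone.

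\textbf{The main obstacle.} The delicate point is the geometric input of Step 2: exhibiting, on a dense open that still meets $Z$ generically, a finite faithfully flat cover by a scheme, compatibly with the Frobenius. Global quotient stacks $[\operatorname{Spec}(A)/\Gamma]$ pose no difficulty, but a general concentrated Deligne--Mumford stack need not be a global quotient — nontrivial gerbes being the basic example — so I expect one must descend such a cover along an étale presentation of the coarse space (using, in characteristic $p$, that $\mathbb{Z}/p$‐gerbes trivialize over affine opens of the coarse space), or treat the residual gerbe‐type strata directly, where $D^b_{\operatorname{coh}}$ is governed by the representation theory of the finite band and the question degenerates to a small model. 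Arranging this stratification of $\mathcal{X}$ and verifying that every functoriality survives it — Frobenius versus the covers and the open immersions, pushforward of perfect complexes along $f$, and the support conditions throughout — is where essentially all of the work lies; the remainder is formal or a direct appeal to \Cref{prop:relative_Frobenius_generation_schemes}.
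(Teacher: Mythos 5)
Your reduction in Step 1 and your use of \Cref{prop:relative_Frobenius_generation_schemes} as the base case are sound and agree with the paper's strategy (the paper isolates exactly this thick-generation statement as \Cref{hyp:frob_gen}). But the geometric core of Step 2 has a genuine gap that your own closing paragraph correctly identifies without resolving: a concentrated Deligne--Mumford stack need not contain \emph{any} dense Zariski-open substack admitting a finite flat surjective cover by a scheme. The quotient presentations supplied by Keel--Mori/Rydh exist only \'{e}tale-locally on the coarse space, and \'{e}tale-local on the coarse space is not Zariski-local on $\mathcal{X}$; for a nontrivial gerbe (say a $\mu_n$-gerbe attached to a nonzero Brauer class, or a nonsplit $\mathbb{Z}/p$-gerbe) no Zariski localization trivializes the gerbe, so your proposed dense open $\mathcal{U}$ with its cover $f\colon V\to\mathcal{U}$ need not exist. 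Your suggested fixes do not close this: the assertion that $\mathbb{Z}/p$-gerbes trivialize over affine opens of the coarse space is false in general, and "treating the residual gerbe strata directly" is not an argument. The structural consequence is that your induction scheme --- Noetherian induction on closed subsets of the \emph{fixed} stack $\mathcal{X}$, with only open immersions and finite flat covers as moves --- is too rigid. The paper's proof instead runs the Hall--Rydh \'{e}tale d\'{e}vissage (\cite[Theorem E]{Hall/Rydh:2018}) on the 2-category of representable, separated, quasi-finite flat stacks over $\mathcal{S}$, which requires a \emph{third} axiom your proposal omits: descent along an \'{e}tale neighborhood $(\mathcal{U}\hookrightarrow\mathcal{X},\ \mathcal{Y}\to\mathcal{X})$ where $\mathcal{Y}\to\mathcal{X}$ is \'{e}tale (not an open immersion) and an isomorphism over the closed complement. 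This is \Cref{prop:etale_nbhd}, and it is precisely the mechanism that transports the finite-flat-cover case from \'{e}tale charts back to $\mathcal{X}$; without it the gerbe-type strata cannot be reached.

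Two smaller points. First, your claim that "$f$ being finite and surjective gives $\langle\mathbf{R}f_\ast D^b_{\operatorname{coh},f^{-1}Z}(V)\rangle=D^b_{\operatorname{coh},Z\cap|\mathcal{U}|}(\mathcal{U})$" is not formal: it rests on the splitting of the unit $E\to\mathbf{R}f_\ast\mathbf{L}f^\ast E$ for finite flat surjective $f$ (the paper invokes \cite[Lemma 5.7]{DeDeyn/Lank/ManaliRahul/Peng:2025} together with approximation by compacts in \Cref{lem:finite_cover_quotients_for_support}), and this splitting is a substantive input, not a consequence of finiteness and surjectivity alone. Second, your Step 3 gluing along $j^\ast$ and the uniformity of the threshold $e$ are fine as far as they go (this is the content of \Cref{lem:relative_denseness_open_immersion} and \Cref{prop:open_immersion}), but as written they only implement the open/closed part of the d\'{e}vissage; the missing \'{e}tale-neighborhood comparison is where the remaining, and essential, work lives.
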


See \Cref{thm:Frobenius_generation}. Our argument for \Cref{introthm:Frobenius_generation} differs from that of \cite[Theorem A]{Ballard/Iyengar/Lank/Mukhopadhyay/Pollitz:2023}.
In particular, the results of \cite{Ballard/Iyengar/Lank/Mukhopadhyay/Pollitz:2023} rely on Stevenson's local-to-global principle for tensor actions of triangulated categories \cite{Stevenson:2014a}. 

By contrast, we leverage Hall--Rydh's \'{e}tale d\'{e}vissage to construct generators geometrically \cite{Hall/Rydh:2018}.
This naturally leads to the relative statement. 

Our argument proceeds in three stages:
\begin{itemize}
    \item (Local rings) The key ingredient is \cite[Theorem 4.1.13]{Bhatt/Blickle/Schwede/Tucker:2026}. It allows us to check that the residue field lies in the thick subcategory generated by the sufficiently many Frobenius pushforwards of Koszul complexes. 
    \item (Affine schemes) The result for $F$-finite local rings is extended to $F$-finite affine schemes using a commutative algebra result of Letz \cite{Letz:2021}.
    \item (Stacks) Generators are glued in the Zariski topology in \Cref{lem:generation_is_affine_local}, and in the quasi-finite and flat topology using \'{e}tale d\'{e}vissage in \Cref{prop:open_immersion,prop:finite_cover,prop:etale_nbhd}. 
\end{itemize}
These techniques should be useful to establish local-global generation type results for algebraic stacks beyond the scope of our results. 

\begin{remark}
    \hfill
    \begin{enumerate}
        \item The $F$-finiteness assumption in \Cref{introthm:Frobenius_generation} is necessary even in the affine case (e.g.\ for $\mathbb{F}_p(x_1,x_2,\ldots)$). 
        We also provide a counterexample of an algebraic stack locally of finite type over $\mathbb{F}_p$ but not $F$-finite. See \Cref{ex:non_F-finite_false} for more details. 
        \item The concentrated condition ensures that the compact objects coincide with the perfect complexes \cite[Remark 4.6]{Hall/Rydh:2017}. 
        This assumption is necessary as a limitation of our method. 
        See \Cref{rm:finite_cover_quotients_for_support} for more details. 
        \item We do not know if \Cref{introthm:Frobenius_generation} would hold for $F$-finite but non-concentrated algebraic stacks. See \Cref{ex:F-finite_non_tame_maybe}. 
        One technical obstruction is whether we could descend the condition described in \Cref{hyp:frob_gen} along finite \'{e}tale coverings. 
        It would be interesting to investigate these questions further. 
    \end{enumerate}
\end{remark}

As a consequence of \Cref{thm:Frobenius_generation}:

\begin{corollary}
    \label{cor:kunz}
    Let $\mathcal{X}$ be a Noetherian concentrated $F$-finite algebraic stack with separated quasi-finite diagonal. 
    Then the following are equivalent for any $Z\subseteq |\mathcal{X}|$ closed:
    \begin{enumerate}
        \item $Z$ is contained in the regular locus of $\mathcal{X}$ (i.e.\ $x\in \mathcal{X}$ for which $\mathcal{X}$ is regular at $x$)
        \item $\mathbf{R} F_\ast^e G\in \operatorname{Perf}_Z (\mathcal{X})$ for all $e \gg 0$ and $G$ a classical generator for $\operatorname{Perf}_Z (\mathcal{X})$.
    \end{enumerate}
\end{corollary}

This yields a categorical criterion for regularity for algebraic stacks in positive characteristics. 
It is inspired by the aforementioned classical result of Kunz on regularity \cite{Kunz:1976}.

\subsubsection{Bounding iterates}
\label{sec:intro_what_we_do_iterate}

A natural question is how many Frobenius pushforwards are required before generation occurs. 
In \cite{Ballard/Iyengar/Lank/Mukhopadhyay/Pollitz:2023}, a numeric called the \textit{codepth} of a Noetherian scheme was introduced to bound this number of iterates. 
It is not clear whether codepth behaves well for algebraic stacks.

Instead, we work with a slightly less sharp but more tractable invariant $\gamma(F_\ast \mathcal{O}_X)$.
See \Cref{not:gamme_function}. 
Roughly speaking, it defined using differences of minimal number of generators of Frobenius pushforwards of a module and the depth of a local ring. 
The upshot is that \'{e}tale d\'{e}vissage gains us control of this invariant in the \'{e}tale topology. 

This brings us to the next result:

\begin{proposition}
    \label{introprop:Frobenius_generation_bound}
    Let $\mathcal{X}$ be a concentrated $F$-finite Deligne--Mumford stack with separated diagonal. 
    Consider an \'{e}tale surjective morphism $U\to \mathcal{X}$ from an affine scheme where $F_\ast \mathcal{O}_U$ is minimally generated by $N$ local sections. 
    Then, in \Cref{introthm:Frobenius_generation}, one can take $e> \log_p (N) $. 
    In particular, $N$ is always finite and independent of $Z$.
\end{proposition}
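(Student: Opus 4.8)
The plan is to reduce \Cref{introprop:Frobenius_generation_bound} to the analogous statement for an affine scheme and then track how many iterates of the Frobenius pushforward are needed there. First I would invoke the étale dévissage machinery underlying \Cref{introthm:Frobenius_generation}: given the étale surjective $f\colon U \to \mathcal{X}$ from an affine scheme $U = \operatorname{Spec}(R)$, one uses the fact that generation in $D^b_{\operatorname{coh},Z}(\mathcal{X})$ can be checked after pulling back along $f$ (since $f$ is étale surjective, hence faithfully flat of finite presentation, and $\mathbf{L}f^\ast$ detects whether a thick subcategory is everything when combined with the dévissage structure). The key compatibility is that $f$ commutes with the (absolute) Frobenius in the sense that $F_\mathcal{X} \circ f = f \circ F_U$ up to the canonical identification, so $\mathbf{L}f^\ast \mathbf{R}F_{\mathcal{X},\ast}^e G$ is built from $\mathbf{R}F_{U,\ast}^e(\mathbf{L}f^\ast G)$; here one uses flat base change along the étale square, which is exactly where the Deligne--Mumford hypothesis and separated diagonal are used to guarantee the relevant squares are cartesian and the base-change map is an isomorphism.

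Next I would recall the affine input from \cite{Ballard/Iyengar/Lank/Mukhopadhyay/Pollitz:2023}, in the relative form of \Cref{prop:relative_Frobenius_generation_schemes}: on an $F$-finite scheme $X$ with closed subset $Z$, $F_\ast^e G$ classically generates $D^b_{\operatorname{coh},Z}(X)$ once $e$ is large enough that $\mathcal{O}_X$ lies in the thick subcategory generated by $F_\ast^e \mathcal{O}_X$ inside $D^b_{\operatorname{coh}}(X)$, and the quantitative point is that this holds as soon as $p^e \geq N$ where $N$ is the minimal number of generators of $F_\ast\mathcal{O}_X$ as an $\mathcal{O}_X$-module. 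The mechanism is the standard one: writing $F_\ast\mathcal{O}_X$ as a quotient of $\mathcal{O}_X^{\oplus N}$, iterating the Frobenius exhibits $\mathcal{O}_X$ as a retract-up-to-thick-closure of a complex built from $F_\ast^e\mathcal{O}_X$ in $\lceil \log_p N\rceil$ steps — more precisely, a Koszul-type or "colon ideal" argument shows that $N$ generators of $F_\ast\mathcal{O}_X$ produce at most $N^e$ generators of $F_\ast^e\mathcal{O}_X$, but the number of \emph{cones} needed is logarithmic, giving the bound $e \geq \lceil\log_p(N)\rceil$. I would cite \Cref{ex:beta_depth_to_codepth} for the precise packaging of this count in terms of the invariant used in \Cref{sec:Frobenius_generation_Variation_of_codepth}.

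Then I would combine the two: since $\mathbf{L}f^\ast$ is exact, monoidal, and carries $\mathbf{R}F_{\mathcal{X},\ast}^e G$ to something in the thick subcategory generated by $\mathbf{R}F_{U,\ast}^e(\mathbf{L}f^\ast G) = F_{U,\ast}^e(\mathbf{L}f^\ast G)$ (Frobenius pushforward on an affine scheme is just the underived pushforward along a finite morphism), the bound $p^e \geq N$ valid on $U$ descends: if $\mathbf{L}f^\ast$ of the thick closure of $\mathbf{R}F_{\mathcal{X},\ast}^e G$ contains $\mathbf{L}f^\ast D^b_{\operatorname{coh},Z}(\mathcal{X})$, then by the dévissage criterion $\mathbf{R}F_{\mathcal{X},\ast}^e G$ already generates $D^b_{\operatorname{coh},Z}(\mathcal{X})$. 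Finiteness of $N$ is automatic because $U$ is affine, $F$-finite (inherited from $\mathcal{X}$ being $F$-finite, by \Cref{ex:DM_F-finite} or directly since étale maps preserve $F$-finiteness), so $F_\ast\mathcal{O}_U$ is a finite $\mathcal{O}_U$-module; and $N$ is manifestly independent of $Z$ since it only refers to the structure sheaf of $U$.

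The main obstacle I anticipate is verifying that generation genuinely descends along the étale cover in the quantitative form — i.e., that the \emph{number} of iterates needed upstairs on $\mathcal{X}$ is controlled by the number needed on a single chart $U$, rather than requiring a possibly larger number to glue the local generation statements. One must argue that the étale dévissage filtration of $D^b_{\operatorname{coh},Z}(\mathcal{X})$ does not accumulate extra cone-length beyond what is already present in $D^b_{\operatorname{coh}}$ of the affine charts; concretely, that the "relative" categories $D^b_{\operatorname{coh},Z'}$ appearing in the dévissage are each generated by $e$-th Frobenius pushforwards with the \emph{same} $e$, which is why passing to a single surjective chart $U$ (so that $Z$ pulls back to cover all the relevant strata) suffices. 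This is where I would lean most heavily on the structure of the proof of \Cref{thm:Frobenius_generation} rather than reprove it, and on the fact that $U \to \mathcal{X}$ being surjective means the minimal number of generators of $F_\ast\mathcal{O}_{\mathcal{X}}$ "étale-locally" is bounded by $N$ uniformly.
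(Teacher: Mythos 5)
Your high-level plan---\'{e}tale d\'{e}vissage, the quantitative affine input packaged in \Cref{prop:relative_Frobenius_generation_schemes} and \Cref{ex:beta_depth_to_codepth}, and compatibility of Frobenius pushforward with \'{e}tale pullback (\Cref{lem:etale_base_change_Frobenii})---is the same as the paper's. However, the step you yourself flag as ``the main obstacle'' is the entire content of the proposition beyond \Cref{introthm:Frobenius_generation}, and your proposal does not resolve it. There is no ``d\'{e}vissage criterion'' in the form you invoke, namely that $\mathbf{R}F_\ast^e G$ generates $D^b_{\operatorname{coh},Z}(\mathcal{X})$ as soon as its pullback to the single chart $U$ does. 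What the paper actually does (\Cref{prop:Frobenius_generation_bound}) is rerun the whole d\'{e}vissage of \Cref{thm:Frobenius_generation} with the fixed value $e\geq c=\lceil\log_p(\gamma(F_\ast\mathcal{O}_U))\rceil$: the open-immersion and \'{e}tale-neighborhood steps are observed not to change $e$, and the finite flat cover step $V\to\mathcal{X}$ is handled by forming $V\times_{\mathcal{S}}U$, proving generation there stalkwise via \Cref{prop:relative_Frobenius_generation_schemes}, and then descending thick-subcategory membership along the faithfully flat morphism $\operatorname{Spec}(\mathcal{O}_{V\times_{\mathcal{S}}U,t})\to\operatorname{Spec}(\mathcal{O}_{V,s'(t)})$ using the descent results of \cite{Letz:2021}. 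This descent ingredient is absent from your sketch, and without it the bound established on a chart does not propagate to $\mathcal{X}$.

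The second gap is your assertion, stated as a ``fact,'' that surjectivity of $U\to\mathcal{X}$ uniformly bounds by $N$ the number of local generators of $F_\ast\mathcal{O}$ on all the other schemes appearing in the induction. This requires proof and is exactly \Cref{lem:bound_along_etale}: for an \'{e}tale morphism of $F$-finite schemes the invariant $\gamma(F_\ast\mathcal{O})=\sup_s\{\beta^{\mathcal{O}_{X,s}}(F_\ast\mathcal{O}_{X,s})-\operatorname{depth}(\mathcal{O}_{X,s})\}$ does not increase, by right-exactness of flat pullback (bounding $\beta$), the identification $f_t^\ast F_\ast^e\mathcal{O}\cong F_\ast^e f_t^\ast\mathcal{O}$, and invariance of depth under \'{e}tale maps. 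This is what lets the single chart $U$ control $\gamma(F_\ast\mathcal{O}_{V\times_{\mathcal{S}}U})$ for every finite flat cover $V$ arising in the d\'{e}vissage. With \Cref{lem:bound_along_etale} and the Letz descent supplied, your outline closes and coincides with the paper's proof; as written, it identifies the right difficulty but leaves it open.
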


See \Cref{prop:Frobenius_generation_bound}. 
The case in which $F_\ast^e \mathcal{O}_X$ is a strong generator of $D^b_{\operatorname{coh}}(X)$ for $e\gg 0$ on an $F$-finite scheme was studied in \cite{Ballard/Iyengar/Lank/Mukhopadhyay/Pollitz:2023}. 
This holds when $X$ is quasi-affine, but not necessarily in the global setting. In particular, a smooth projective curve over an algebraically closed field satisfies this condition if, and only if, the curve has genus zero. 

It is natural to ask when analogous conditions hold for algebraic stacks in prime characteristic. 
That is, $F_\ast^e \mathcal{O}_{\mathcal{X}}$ is a classical or strong generator for $D^b_{\operatorname{coh}}(\mathcal{X})$ with $\mathcal{X}$ an $F$-finite algebraic stack. 
We partially address this condition for tame stacky curves over an algebraically closed field of prime characteristic in \Cref{prop:F-thickness_stacky_curve}. 

In a related direction, \cite{Hanlon/Hicks/Lazarev:2024} and \cite{Favero/Huang:2023} show that this property holds for many toric stacks, even beyond the Deligne--Mumford setting. 
It would be interesting to determine the precise class of toric stacks for which this property is satisfied.

\begin{ack}
    The authors thank Andres Fernandez Herrero, Timothy De Deyn, Jack Hall, Andrew Hanlon, Urs Hartl, Kabeer Manali Rahul, Alapan Mukhopadhyay, Amnon Neeman, Frank Neumann, Martin Olsson, David Rydh. 
    Lank was supported under the ERC Advanced Grant 101095900-TriCatApp.
    Peng was supported by the Australian Research Council DP210103397 and FT210100405, a Melbourne Research Scholarship, and the Science Abroad Travelling Scholarship offered by the University of Melbourne.
\end{ack}

\section{Preliminaries}
\label{sec:prelim}

Throughout our work, $p$ is a prime number unless otherwise specified.

\subsection{Generation}
\label{sec:prelim_generation}

We briefly discuss a notion of generation for triangulated categories. For details, the reader is referred to \cite{Bondal/VandenBergh:2003}. Consider a triangulated category $\mathcal{T}$. 
Denote its shift functor by $[1]\colon \mathcal{T} \to \mathcal{T}$. Let $\mathcal{S}\subseteq \mathcal{T}$. 
We say $\mathcal{S}$ is \textbf{thick} if it is a triangulated subcategory of $\mathcal{T}$ which is closed under direct summands. 
Denote by $\langle \mathcal{S} \rangle$ the smallest thick subcategory containing $\mathcal{S}$ in $\mathcal{T}$; if $\mathcal{S}$ consists of a single object $G$, then $\langle \mathcal{S}\rangle$ will be written as $\langle G\rangle$. 
Set $\operatorname{add}(\mathcal{S})$ to be the smallest strictly full subcategory of $\mathcal{T}$ containing $\mathcal{S}$ which is closed under shifts, finite coproducts, and direct summands. 
Define $\langle \mathcal{S} \rangle_0$ to consist of all objects in $\mathcal{T}$ isomorphic to the zero object. 
Set $\operatorname{smd}(\mathcal{S})$ to be the strictly full subcategory of direct summands of finite coproducts of objects from $\mathcal{S}$. 

Moreover, $\langle \mathcal{S} \rangle_1 := \operatorname{add}(\mathcal{S})$, and inductively, let 
\begin{displaymath}
    \langle \mathcal{S} \rangle_n := \operatorname{add} \{ \operatorname{cone}(\phi) \colon \phi \in \operatorname{Hom}_{\mathcal{T}} (\langle \mathcal{S} \rangle_{n-1}, \langle \mathcal{S} \rangle_1) \}.
\end{displaymath}
There is a filtration for the smallest thick subcategory; namely, $\langle \mathcal{S} \rangle = \cup^\infty_{n=0} \langle \mathcal{S} \rangle_n$. 
An object $G\in \mathcal{T}$ is called a \textbf{classical generator} if $\langle G \rangle = \mathcal{T}$. 
If there is an $n\geq0$ such that $\langle G \rangle_{n+1} = \mathcal{T}$, then $G$ is called a \textbf{strong generator}.

If $\mathcal{T}$ admits small coproducts, then the collection of compact objects in $\mathcal{T}$ is denoted by $\mathcal{T}^c$. 
These form a triangulated subcategory of $\mathcal{T}$. 
We say that $\mathcal{T}$ is \textbf{compactly generated} if it coincides with the smallest triangulated subcategory of $\mathcal{T}$ containing $\mathcal{T}^c$ and closed under small coproducts. 
Equivalently, $\mathcal{T}$ is compactly generated if, for any $E \in \mathcal{T}$ satisfying $\operatorname{Hom}(P, E) = 0$ for all $P \in \mathcal{T}^c$, one has $E \cong 0$ \cite[Lemma 2.2.1]{Schwede/Shipley:2003}. 
If $\mathcal{T}$ is compactly generated, then classical generators for $\mathcal{T}^c$ coincide with compact generators for $\mathcal{T}$ \cite[\href{https://stacks.math.columbia.edu/tag/09SR}{Tag 09SR}]{stacks-project}. 
Let $\operatorname{Add}(\mathcal{S})$ be the smallest strictly full subcategory of $\mathcal{T}$ containing $\mathcal{S}$ that is closed under shifts, small coproducts, and direct summands. 
Inductively, let $\overline{\langle \mathcal{S} \rangle}_0$ consist of all objects in $\mathcal{T}$ isomorphic to the zero object, $\overline{\langle \mathcal{S} \rangle}_1 := \operatorname{Add}(\mathcal{S})$, and
\begin{displaymath}
    \overline{\langle \mathcal{S} \rangle}_n := \operatorname{Add} \{ \operatorname{cone}(\phi) \mid \phi \in \operatorname{Hom}_{\mathcal{T}} (\overline{\langle \mathcal{S} \rangle}_{n-1}, \overline{\langle \mathcal{S} \rangle}_1) \}.
\end{displaymath}

\begin{example}
    Let $X$ be a quasi-compact quasi-separated scheme. 
    Then $D_{\operatorname{qc}}(X)^c=\operatorname{Perf}(X)$, and moreover, $\operatorname{Perf}(X)$ admits a classical generator. 
    See \cite[Theorem 3.1.1]{Bondal/VandenBergh:2003}. 
    If $X$ is quasi-affine, then $\mathcal{O}_X$ compactly generates $D_{\operatorname{qc}}(X)$ \cite[\href{https://stacks.math.columbia.edu/tag/0BQT}{Tag 0BQT}]{stacks-project}.
    More generally, let $Z\subseteq X$ be closed such that its complement $U=X\setminus Z$ is quasi-compact.
    By \cite[Theorem 6.8]{Rouquier:2008}, $D_{\operatorname{qc},Z}(X)$ is compactly generated by a single object. 
    Hence, $\operatorname{Perf}(X)\cap D_{\operatorname{qc},Z}(X)$ admits a classical generator.
\end{example}

\begin{remark}
    \label{rmk:relative_perf_dense_localization}
    Let $X$ be a Noetherian scheme. 
    Consider an open immersion $j\colon U \to X$. Define $Z:= |X|\setminus |U|$. 
    For any $W\subseteq |X|$ closed, there exists a Verdier localization sequence of compactly generated triangulated categories,
    \begin{displaymath}
        D_{\operatorname{qc}, Z\cap W}(X) \to D_{\operatorname{qc},W} (X) \xrightarrow{\mathbf{L}j^\ast} D_{\operatorname{qc},W\cap U}(X).
    \end{displaymath}
    See e.g.\ \cite[Proposition 3.1]{Lank:2026}. 
    By \cite[Theorem 2.1]{Neeman:1996}, there exists a Verdier localization sequence up to direct summands,
    \begin{displaymath}
        \operatorname{Perf}_{Z\cap W}(X) \to \operatorname{Perf}_W (X) \xrightarrow{\mathbf{L}j^\ast} \operatorname{Perf}_{W\cap U}(X).
    \end{displaymath}
    In fact, for any $G\in \operatorname{Perf}_W (X)$ a compact generator for $D_{\operatorname{qc},W}(X)$ and $p\in X$, $\mathbf{L}t^\ast G$ is a compact generator for $D_{\operatorname{qc},W\cap \operatorname{Spec}(\mathcal{O}_{X,p})}(\mathcal{O}_{X,p})$. 
    Indeed, this follows from \cite[Lemma 4.8(2)]{Hall/Rydh:2017} and \cite[Lemma 1.2]{Neeman:1992}.
\end{remark}

\subsection{Algebraic stacks}
\label{sec:prelim_stacks}

We follow \cite{stacks-project} for conventions regarding algebraic stacks and \cite[\S1]{Hall/Rydh:2017} for the derived pullback/pushforward adjunction. 
Typically, $X$, $Y$, etc.\ refer to schemes/algebraic spaces, whereas $\mathcal{X}$, $\mathcal{Y}$, etc.\ refer to algebraic stacks. 
Let $\mathcal{X}$ be a Noetherian algebraic stack. 

\subsubsection{Finiteness reminder}
\label{sec:prelim_stacks_finiteness}

A smooth morphism of algebraic stacks is locally of finite presentation, and hence, locally of finite type \cite[\href{https://stacks.math.columbia.edu/tag/0DNP}{Tag 0DNP} \& \href{https://stacks.math.columbia.edu/tag/06Q5}{Tag 06Q5}]{stacks-project}. 
Additionally, if $f$ is a morphism from a quasi-compact quasi-separated source to a quasi-separated target, then $f$ is quasi-compact and quasi-separated \cite[\href{https://stacks.math.columbia.edu/tag/075S}{Tag 075S}]{stacks-project}. Lastly, a morphism locally of finite type to a locally Noetherian algebraic stack is locally of finite presentation. 
Additionally, if such a morphism is quasi-compact and quasi-separated, then it is of finite presentation. 
See e.g.\ \cite[\href{https://stacks.math.columbia.edu/tag/0DQJ}{Tag 0DQJ}]{stacks-project}.

\subsubsection{Associated triangulated categories}
\label{sec:prelim_stacks_categories}

Denote by $\operatorname{Mod}(\mathcal{X})$ the Grothendieck abelian category of sheaves of $\mathcal{O}_\mathcal{X}$-modules on the lisse-\'{e}tale site of $\mathcal{X}$. 
Set $\operatorname{Qcoh}(\mathcal{X})$ (resp.\ $\operatorname{coh}(\mathcal{X})$) to be the strictly full subcategory of $\operatorname{Mod}(\mathcal{X})$ consisting of quasi-coherent (resp.\ coherent) sheaves. 
Define $D(\mathcal{X}):=D(\operatorname{Mod}(\mathcal{X}))$ as the derived category of $\operatorname{Mod}(\mathcal{X})$. 
Also, $D_{\operatorname{qc}}(\mathcal{X})$ (resp.\ $D^b_{\operatorname{coh}}(\mathcal{X})$) is the full subcategory of $D(\mathcal{X})$ consisting of complexes with quasi-coherent cohomology sheaves (resp.\ which are bounded and with coherent cohomology). 
Lastly, $\operatorname{Perf}(\mathcal{X})$ is the full subcategory of perfect complexes in $D_{\operatorname{qc}}(\mathcal{X})$.

\subsubsection{Affine pointed}
\label{sec:prelim_stacks_affine_pointed}

We say $\mathcal{X}$ is \textbf{affine-pointed} if every morphism $\operatorname{Spec}(k) \to \mathcal{X}$ from a field $k$ is affine. 
For example, any algebraic stack with quasi-affine or quasi-finite diagonal is affine-pointed. See \cite{Hall/Rydh:2019} for details.

\subsubsection{Concentratedness}
\label{sec:prelim_stacks_concentrated}

The following was introduced by \cite[Definition 2.4]{Hall/Rydh:2017}. 
A morphism of algebraic stacks is called \textbf{concentrated} if it is quasi-compact, quasi-separated, and if the derived pushforward of any base change along a quasi-compact quasi-separated morphism has finite cohomological dimension. 
We refer the reader to \cite[\S 2]{Hall/Rydh:2017} for details. One case of morphisms that are concentrated includes those which are representable by algebraic spaces \cite[Lemma 2.5]{Hall/Rydh:2017}. 
An algebraic stack $\mathcal{X}$ is \textbf{concentrated} if it is quasi-compact, quasi-separated, and its structure morphism $\mathcal{X} \to \operatorname{Spec}(\mathbb{Z})$ is concentrated.

\subsubsection{Perfect complexes}
\label{sec:prelim_stacks_perfects}

On any ringed site, e.g.\ lisse-\'{e}tale site of $\mathcal{X}$, the notion of perfect complexes are definable \cite[\href{https://stacks.math.columbia.edu/tag/08G4}{Tag 08G4}]{stacks-project}. 
Particularly, a complex is \textbf{strictly perfect} if it is a bounded complex with each term a direct summand of a finite free, whereas it is \textbf{perfect} if it is locally strictly perfect. 
Denote by $\operatorname{Perf}(\mathcal{X})$ for the triangulated subcategory of $D_{\operatorname{qc}}(\mathcal{X})$ consisting of perfect complexes. 
In general, the compact objects of $D_{\operatorname{qc}}(\mathcal{X})$ are perfect complexes \cite[Lemma 4.4]{Hall/Rydh:2017}, but the converse need not be true. 
This is the case if, and only if, the algebraic stack is concentrated \cite[Remark 4.6]{Hall/Rydh:2017}.

\subsubsection{Supports}
\label{sec:prelim_stacks_supports}

Let $p \colon U \to \mathcal{X}$ be a smooth surjective morphism from a scheme. The notion of support for objects in $D_{\operatorname{qc}}(U)$ extends to that of $\mathcal{X}$ as follows. 
For any $M\in\operatorname{Qcoh}(\mathcal{X})$, set $\operatorname{supp}(M):= p(\operatorname{supp}(p^\ast M))$. 
More generally, given $E\in D_{\operatorname{qc}}(X)$, define the \textbf{support of $E$} as $\operatorname{supp}(E) := \cup_{j\in\mathbb{Z}} \operatorname{supp}(\mathcal{H}^j (E))$. 
It is possible to check that this is independent of the choice of $p$. See \cite[Section 4.3]{Hall/Rydh:2017} for more details. Consider a closed subset $Z\subseteq|\mathcal{X}|$. We say $E\in D_{\operatorname{qc}}(\mathcal{X})$ is \textbf{supported on $Z$} if $\operatorname{supp}(E)\subseteq Z$. Set $D_{\operatorname{qc},Z}(\mathcal{X})$ as the full subcategory of $D_{\operatorname{qc}}(\mathcal{X})$ consisting of objects supported on $Z$. We define similar categories using the adornments $+$, $-$, $b$, etc. Also, we define similar categories on `smaller' objects, e.g.\ $\operatorname{Perf}_Z (\mathcal{X})$ or $D^b_{\operatorname{coh},Z}(\mathcal{X})$.

\subsubsection{Approximation by compacts}
\label{sec:prelim_algebraic_stacks_approx}

The following is a concept motivated by Lipman--Neeman for schemes \cite{Lipman/Neeman:2007}. 
It has been extended to algebraic stacks by \cite{Hall/Lamarche/Lank/Peng:2025}. 
Recall that perfect complexes need not coincide with the compact objects of $D_{\operatorname{qc}}(\mathcal{X})$. 
Consider the datum $(T, E, m)$ where $T\subseteq|\mathcal{X}|$ is closed, $E\in D_{\operatorname{qc}}(\mathcal{X})$, and $m\in \mathbb{Z}$. 
We say \textbf{approximation by compacts} holds for $(T, E, m)$ if there exists a $C\in D_{\operatorname{qc},T}(\mathcal{X})$ compact in $D_{\operatorname{qc}}(\mathcal{X})$ and $C\to E$ such that the induced morphism $\mathcal{H}^i(C) \to \mathcal{H}^i(E)$ is an isomorphism if $i>m$ and is surjective if $i=m$. More generally, one says that $\mathcal{X}$ satisfies \textbf{approximation by compacts} if for every $T\subseteq |\mathcal{X}|$ closed (here, $T$ has quasi-compact complement as $\mathcal{X}$ is Noetherian), there exists an integer $r$ such that for any $(T, E, m)$, where $E$ is $(m-r)$-pseudocoherent (see e.g.\ \cite[\href{https://stacks.math.columbia.edu/tag/08FT}{Tag 08FT}]{stacks-project}) and $\mathcal{H}^{i}(E)$ is supported on $T$ for $i\geq m-r$, 
approximation by compacts holds. 
The reader is referred to \cite[$\S 3$]{Hall/Lamarche/Lank/Peng:2025} for further details. 
This property holds for any algebraic stack with quasi-finite separated diagonal \cite[Corollary 5.4]{Hall/Lamarche/Lank/Peng:2025}.

\subsubsection{Thomason condition}
\label{sec:prelim_algebraic_stacks_thomason}

Let $\beta$ be a cardinal. 
We say $\mathcal{X}$ satisfies the \textbf{$\beta$-Thomason condition} if $D_{\operatorname{qc}}(\mathcal{X})$ is compactly generated by a collection of size $\beta$, and for each quasi-compact open immersion $\mathcal{U}\to\mathcal{X}$, there exists $P\in D_{\operatorname{qc}}(\mathcal{X})^{c}$ such that $\operatorname{supp}(P)=|\mathcal{X}|\setminus|\mathcal{U}|$. 
Any quasi-compact algebraic stack with quasi-finite separated diagonal satisfies this condition \cite[Theorem A]{Hall/Rydh:2017}. 
If a quasi-compact quasi-separated algebraic stack is $\beta$-Thomason, $D_{\operatorname{qc},Z}(\mathcal{X})$ is compactly generated by a collection of size $\beta$ whenever $Z$ is a closed subset with quasi-compact complement \cite[Lemma 4.10]{Hall/Rydh:2017}.

\section{Frobenius morphism}
\label{sec:finiteness_Frobenii}

We discuss the Frobenius morphism on algebraic stacks over $\mathbb{F}_p$. 
See \cite[Definition 3.1.1]{Olsson:2007}, \cite{Zhang:2024}, \cite{Castorena/Neumann:2024} for other variations.

\begin{reminder}
    Let $\mathcal{X}$ be an algebraic stack over $\mathbb{F}_p$. 
    Define a morphism $F_{\mathcal{X}} \colon \mathcal{X} \to \mathcal{X}$ of fibered categories over $(\operatorname{Sch}_{\mathbb{F}_p})_{fppf}$ as follows. 
    We first describe $F_{\mathcal{X}}$ on objects. 
    Let $T$ be a $\mathbb{F}_p$-scheme over $\mathcal{X}$. Let $x$ be an object of $\mathcal{X}(T)$, i.e. $x\colon T\to \mathcal{X}$ is a morphism of stacks. Note that $T$ admits a Frobenius morphism $F_T$. We define $F_\mathcal{X}(x)$ to be the composition$$T\overset{F_T}{\longrightarrow} T\overset{x}{\longrightarrow}\mathcal{X}.$$

    Next, we describe $F_{\mathcal{X}}$ on morphisms. Let $\phi\colon T^\prime \to T$ be a morphism of $\mathbb{F}_p$ schemes. We have the following commutative diagram
    \begin{displaymath}
        \begin{tikzcd}
            {T^\prime} & {T^\prime} \\
            && {\operatorname{Spec}(\mathbb{F}_p)} \\
            {T} & {T}
            \arrow["{F_{T^\prime}}", from=1-1, to=1-2]
            \arrow["{\phi}"', from=1-1, to=3-1]
            \arrow["{\pi_{T^\prime}}", from=1-2, to=2-3]
            \arrow["{\phi}", from=1-2, to=3-2]
            \arrow["{F_{T}}"', from=3-1, to=3-2]
            \arrow["{\pi_T}"', from=3-2, to=2-3]
        \end{tikzcd}
    \end{displaymath}
    where $\pi_{\#}$ are the structure morphisms. In particular, there is a canonical isomorphism $\phi^*F_{T^\prime}^*\cong F_T^*\phi^*$. We can then define $F_\mathcal{X}(\phi)$ to be $\phi$ itself. By the 2-Yoneda lemma, we see that $F_\mathcal{X}$ is a morphism of stacks as desired.
\end{reminder}

\begin{definition}
    \label{def:absolute_Frobenius}
    The morphism $F_{\mathcal{X}} \colon \mathcal{X} \to \mathcal{X}$ is called the \textbf{absolute Frobenius morphism} on $\mathcal{X}$. 
    Its $e$-th iterate is denoted by $F^e_{\mathcal{X}}$, i.e.
    \begin{displaymath}
        F^e_{\mathcal{X}} = \underbrace{F_{\mathcal{X}} \circ \cdots \circ F_{\mathcal{X}}}_{e \textrm{ times}}.
    \end{displaymath}
    We often omit `absolute' and the subscript if it is clear from context. 
    If $\mathcal{Y} \to \mathcal{X}$ is a morphism of algebraic stacks, then we write $F_{\mathcal{Y}/\mathcal{X}}$ for the \textbf{relative Frobenius morphism of $\mathcal{Y}/\mathcal{X}$}; that is, the unique morphism which fits into the following diagram,
    \begin{displaymath}
        \begin{tikzcd}
            {\mathcal{Y}} & {\mathcal{X}\times_{F,\mathcal{X}} \mathcal{Y}} & {\mathcal{Y}} \\
            & {\mathcal{X}} & {\mathcal{X}.}
            \arrow["{F_{\mathcal{Y}/\mathcal{X}}}"', from=1-1, to=1-2]
            \arrow["F"', bend right =-30pt, from=1-1, to=1-3]
            \arrow[bend right =12pt, from=1-1, to=2-2]
            \arrow[from=1-2, to=1-3]
            \arrow[from=1-2, to=2-2]
            \arrow[from=1-3, to=2-3]
            \arrow["F"', from=2-2, to=2-3]
        \end{tikzcd}
    \end{displaymath}
\end{definition}

\begin{remark}
    \label{rm:etale_base_change_Frobenii}
    Let $g\colon \mathcal{Y} \to \mathcal{X}$ be an \'{e}tale morphism of Noetherian algebraic stacks over $\mathbb{F}_p$. 
    If $g$ is representable by algebraic spaces (e.g.\ an open immersion), then $F_{\mathcal{Y}/\mathcal{X}}$ is an isomorphism and the diagram
    \begin{displaymath}
        \begin{tikzcd}
            {\mathcal{Y}} & {\mathcal{Y}} \\
            {\mathcal{X}} & {\mathcal{X}}
            \arrow["F_{\mathcal{Y}}"', from=1-1, to=1-2]
            \arrow[from=1-1, to=2-1]
            \arrow["F_{\mathcal{X}}"', from=2-1, to=2-2]
            \arrow["g"', from=1-2, to=2-2]
        \end{tikzcd}
    \end{displaymath}
    is Cartesian. 
    Indeed, as $g$ is representable by algebraic spaces, we may assume that $\mathcal{X}$ is a scheme and $\mathcal{Y}$ is an algebraic space. By choosing an \'etale presentation for $\mathcal{Y}$, we may further assume that $\mathcal{Y}$ is also a scheme. In this case, the desired isomorphism holds by \cite[\href{https://stacks.math.columbia.edu/tag/0EBS}{Tag 0EBS}]{stacks-project}.
\end{remark}

\begin{proposition}
    [Rydh]
    \label{lem:homeomorphism_on_underlying_topological_space}
    Let $\mathcal{X}$ be an algebraic stack over $\mathbb{F}_p$. 
    Then the Frobenius morphism on $\mathcal{X}$ is a separated universal homeomorphism in the sense of \cite[\href{https://stacks.math.columbia.edu/tag/0CI7}{Tag 0CI7}]{stacks-project}. 
    In particular, the diagonal of $F_\mathcal{X}$ is finite.
\end{proposition}

\begin{proof}
    Choose a smooth presentation $U\to\mathcal{X}$. Consider the following diagram
    \begin{displaymath}
        \begin{tikzcd}
            {U} & {\mathcal{X}\times_{F,\mathcal{X}} U} & {U} \\
            & {\mathcal{X}} & {\mathcal{X}.}
            \arrow["{F_{U/\mathcal{X}}}"', from=1-1, to=1-2]
            \arrow["F_{\mathcal{X}}"', bend right =-30pt, from=1-1, to=1-3]
            \arrow[bend right =12pt, from=1-1, to=2-2]
            \arrow[from=1-2, to=1-3]
            \arrow[from=1-2, to=2-2]
            \arrow[from=1-3, to=2-3]
            \arrow["F"', from=2-2, to=2-3]
        \end{tikzcd}
    \end{displaymath}
    We know that $F_U$ is a universal homeomorphism \cite[\href{https://stacks.math.columbia.edu/tag/0CC8}{Tag 0CC8}]{stacks-project}. 
    By \cite[Lemma 3.2.4]{Olsson:2007}, the relative Frobenius morphism $F_{U/\mathcal{X}}$ is a finite flat covering. 
    Moreover, it is also universally injective. It follows that $F_{U/\mathcal{X}}$ is a universal homeomorphism. 
    By \cite[\href{https://stacks.math.columbia.edu/tag/0H2M}{Tag 0H2M}]{stacks-project}, the projection $\mathcal{X}\times_{F,\mathcal{X}} U\to U$ is a universal homeomorphism.
    It follows from \cite[\href{https://stacks.math.columbia.edu/tag/0DTQ}{Tag 0DTQ}]{stacks-project} that $F_\mathcal{X}$ is a universal homeomorphism.
    
    To see that $F_\mathcal{X}$ has finite diagonal, we first observe that the relative diagonal $\Delta_{F_\mathcal{X}}$ is the same as the relative Frobenius morphism of the diagonal $\Delta_\mathcal{X}$ of $\mathcal{X}$. Indeed, we have the following fibered square
    \begin{displaymath}
        \begin{tikzcd}
            {\mathcal{X}} & {\mathcal{X}\times_{F_\mathcal{X},\mathcal{X}}\mathcal{X}} & {\mathcal{X}\times\mathcal{X}} \\
            & {\mathcal{X}} & {\mathcal{X}\times\mathcal{X}.}
            \arrow["\Delta_{F_\mathcal{X}}"', from=1-1, to=1-2]
            \arrow["F_{\mathcal{X}\times\mathcal{X}}"', bend right =-30pt, from=1-1, to=1-3]
            \arrow[bend right =12pt, from=1-1, to=2-2]
            \arrow[from=1-2, to=1-3]
            \arrow[from=1-2, to=2-2]
            \arrow["F_\mathcal{X}\times F_\mathcal{X}", from=1-3, to=2-3]
            \arrow["\Delta_\mathcal{X}"', from=2-2, to=2-3]
        \end{tikzcd}
    \end{displaymath}
    Using the same argument above, we see that $\Delta_{F_\mathcal{X}}$ is a universal homeomorphism. 
    As $\Delta_{F_\mathcal{X}}$ is representable, \cite[\href{https://stacks.math.columbia.edu/tag/04DF}{Tag 04DF}]{stacks-project} implies it is integral.
    Therefore, $\Delta_{F_\mathcal{X}}$ is integral and locally of finite type, and so it must be finite.
\end{proof}

\begin{remark}
    The fact that $\Delta_{F_\mathcal{X}}$ is a universal homeomorphism was previously known due to Lei Zhang \cite[Lemma 3.1]{Zhang:2024}.
\end{remark}

\begin{example}
    [Hall]
    \label{ex:nonrepresentability}
    Consider the classifying stack $B\mathbb{G}_{m}$ over $\mathbb{F}_p$. 
    The absolute Frobenius morphism is not representable because $B\mathbb{G}_{m}\times_{B\mathbb{G}_m, F_{\mathbb{G}_m}}\mathbb{F}_p$ is isomorphic to $B\mu_p$ (which is an algebraic stac). 
    To see this, let $X$ be a scheme over $\mathbb{F}_p$. 
    A morphism $X\to B\mathbb{G}_m$ gives us a $\mathbb{G}_m$-torsor, which corresponds uniquely, up to isomorphism, to a line bundle $\mathcal{L}$ on $X$. 
    Let $F_X$ be the Frobenius morphism on $X$. 
    By definition of the Frobenius morphism on $B\mathbb{G}_m$, we see that $F_{B\mathbb{G}_m}(X)$ is given by sending $\mathcal{L}$ to $F_{X}^{\ast}\mathcal{L}\cong L^{\otimes p}$. 
    Denote $\operatorname{Spec}(\mathbb{F}_p)\to B\mathbb{G}_m$ the morphism corresponding to the trivial line bundle. 
    Then the objects of $B\mathbb{G}_{m}\times_{B\mathbb{G}_m, F_{\mathbb{G}_m}}\mathbb{F}_p(X)$ correspond to line bundles $\mathcal{L}$ on $X$ with a trivialization $\mathcal{L}^{\otimes p}\cong\mathcal{O}_X$. 
    It is well-known that this is precisely the groupoid of $\mu_p$-torsors over $X$ in the fppf topology. 
    Since $X$ is arbitrary, we see that $B\mathbb{G}_{m}\times_{B\mathbb{G}_m, F_{\mathbb{G}_m}}\mathbb{F}_p$ is isomorphic to $B\mu_p$ over $\mathbb{F}_p$. 
    More generally, let $G$ be an algebraic group (of finite type) over $\mathbb{F}_p$ and $BG$ be the corresponding classifying stack. 
    Then the Frobenius morphism on $BG$ is representable if, and only if, the Frobenius morphism on $G$ is a monomorphism \cite[Warning\ 3.1.3]{Olsson:2007}. However, this only occurs when $G$ is \'{e}tale.
\end{example}

\begin{corollary}
    \label{cor:Frobenius_for_DM_is_representable}
    Let $\mathcal{X}$ be an algebraic stack over $\mathbb{F}_p$. 
    Then the absolute Frobenius morphism $F_\mathcal{X}\colon\mathcal{X}\to\mathcal{X}$ is representable by algebraic spaces if, and only if, $\mathcal{X}$ is Deligne--Mumford. 
\end{corollary}

\begin{proof}
    We first observe that $F_\mathcal{X}\colon \mathcal{X} \to \mathcal{X}$ is representable by algebraic spaces if, and only if, the induced morphism on the stabilizer groups $G_x \to G_{F(x)}$ is a monomorphism for every geometric point $x$ of $\mathcal{X}$. 
    On the other hand, the Frobenius morphism $F\colon \mathcal{X} \to \mathcal{X}$ induces a morphism $G_x \to G_{F(x)}$, which coincides with the relative Frobenius morphism of $G_x$ over its residual field (see \Cref{def:absolute_Frobenius}). It suffices to show that $\mathcal{X}$ is Deligne--Mumford if, and only if, the relative Frobenius morphism of $G_x$ is a monomorphism for every geometric point $x$ of $\mathcal{X}$. 
    However, the relative Frobenius morphism of $G_x$ is a monomorphism if, and only if, $G_x$ is \'etale over its residue field. Thus, the claim follows.
\end{proof}

\section{\texorpdfstring{$F$}{F}-finiteness}
\label{sec:finiteness_F-finiteness}

We propose an extension of $F$-finiteness from schemes to algebraic stacks over $\mathbb{F}_p$.

\begin{definition}
    \label{def:F_finite}
    An algebraic stack $\mathcal{X}$ over $\mathbb{F}_p$ is called \textbf{$F$-finite} if the Frobenius morphism $F_\mathcal{X}\colon \mathcal{X} \to \mathcal{X}$ is concentrated and locally of finite type. 
\end{definition}

\begin{remark}
    \label{rmk:F-finiteness}
    \Cref{def:F_finite} generalizes the notion of $F$-finiteness for schemes. Indeed, the Frobenius morphism on a scheme is always an integral morphism of schemes. 
    Moreover, every quasi-compact quasi-separated morphism of schemes is concentrated, whereas every locally of finite type integral morphism is finite. 
\end{remark}

\begin{lemma}
    \label{lem:F-finiteness_ascent_descent}
    Let $f\colon\mathcal{Y}\to\mathcal{X}$ be a morphism of algebraic stacks.
    \begin{enumerate}
        \item If $\mathcal{X}$ is $F$-finite (resp.\ $F$-finite and $F_\mathcal{X}$ is representable) and $f$ is concentrated (resp.\ representable) and locally of finite type, then $\mathcal{Y}$ is $F$-finite (resp.\ $F$-finite and $F_\mathcal{Y}$ is representable).
        \item If $\mathcal{X}$ is concentrated, $\mathcal{Y}$ is $F$-finite and $f$ is flat, surjective, representable, and locally of finite presentation, then $\mathcal{X}$ is $F$-finite.
    \end{enumerate}
\end{lemma}

\begin{proof}
    Consider the commutative diagram
    \begin{displaymath}
        \begin{tikzcd}
            {\mathcal{Y}} & {\mathcal{X}\times_{F,\mathcal{X}} \mathcal{Y}} & {\mathcal{Y}} \\
            & {\mathcal{X}} & {\mathcal{X}.}
            \arrow["{F_{\mathcal{Y}/\mathcal{X}}}"', from=1-1, to=1-2]
            \arrow["F_{\mathcal{Y}}"', bend right =-30pt, from=1-1, to=1-3]
            \arrow["f",bend right =12pt, from=1-1, to=2-2]
            \arrow[from=1-2, to=1-3]
            \arrow["f^\prime",from=1-2, to=2-2]
            \arrow[from=1-3, to=2-3]
            \arrow["F_{\mathcal{X}}"', from=2-2, to=2-3]
        \end{tikzcd}
    \end{displaymath}
    Suppose $\mathcal{X}$ is $F$-finite. 
    It suffices to show $F_{\mathcal{Y}/\mathcal{X}}$ is concentrated and locally of finite type. 
    By assumption, both $f$ and $f^\prime$ are concentrated and locally of finite type. 
    The result then follows from \cite[Lemma 2.5(4)]{Hall/Rydh:2017} and \cite[\href{https://stacks.math.columbia.edu/tag/06U9}{Tag 06U9}]{stacks-project}. 

    Suppose $\mathcal{Y}$ is $F$-finite. 
    The same argument shows that $F_{\mathcal{Y}/\mathcal{X}}$ is concentrated and locally of finite type. 
    Applying the argument once more to $F_\mathcal{Y}$ and $F_{\mathcal{Y}/\mathcal{X}}$ gives that $\mathcal{X}\times_{F,\mathcal{X}} \mathcal{Y}\to\mathcal{Y}$ is concentrated and locally of finite type. 
    The statement then follows from \cite[Lemma 2.5(2)]{Hall/Rydh:2017} and \cite[\href{https://stacks.math.columbia.edu/tag/06U7}{Tag 06U7}]{stacks-project}.
\end{proof}

\begin{remark}
    In particular, \Cref{lem:F-finiteness_ascent_descent} implies that $F$-finiteness is local in the fppf topology for concentrated algebraic stacks. This is false in general. Indeed, consider the smooth covering map $\operatorname{Spec}(\mathbb{F}_p)\to B\mathbb{G}_a$. The field $\mathbb{F}_p$ is certainly $F$-finite. It turns out that $B\mathbb{G}_a$ is not $F$-finite \Cref{ex:Ga_not_F-finite}.
\end{remark}

\begin{proposition}
    \label{prop:F-finite_via_finite_presentation}
    Let $\mathcal{X}$ be an algebraic stack over $\mathbb{F}_p$. 
    Denote by $F_{\mathcal{X}}\colon \mathcal{X} \to \mathcal{X}$ the Frobenius morphism of $\mathcal{X}$. 
    Then the following are equivalent:
    \begin{enumerate}
        \item \label{prop:F-finite_via_finite_presentation1} $\mathcal{X}$ is $F$-finite
        \item \label{prop:F-finite_via_finite_presentation2} there is a smooth surjective morphism $U \to \mathcal{X}$ from an $F$-finite scheme and $\mathcal{X}\times_{F_\mathcal{X},\mathcal{X}} U$ is concentrated.
        \item \label{prop:F-finite_via_finite_presentation3} $U$ is $F$-finite and $\mathcal{X}\times_{F_\mathcal{X},\mathcal{X}} U$ is concentrated for every smooth surjective morphism $U \to \mathcal{X}$.
    \end{enumerate}
    If $\mathcal{X}$ is locally Noetherian, then these statements are equivalent to
    \begin{enumerate}
        \setcounter{enumi}{3}
        \item \label{prop:F-finite_via_finite_presentation4}$F_\mathcal{X}$ is proper and $F_{\mathcal{X},\ast}\colon\operatorname{QCoh}(\mathcal
        X)\to\operatorname{QCoh}(\mathcal
        X)$ is exact.
    \end{enumerate}
\end{proposition}

\begin{proof}
    From \Cref{lem:F-finiteness_ascent_descent}, $\eqref{prop:F-finite_via_finite_presentation1} \implies \eqref{prop:F-finite_via_finite_presentation3}$. 
    It is clear that $\eqref{prop:F-finite_via_finite_presentation3} \implies \eqref{prop:F-finite_via_finite_presentation2}$. 
    For $\eqref{prop:F-finite_via_finite_presentation2} \implies \eqref{prop:F-finite_via_finite_presentation1}$. 
    Assume there is a smooth surjective morphism $s\colon U \to \mathcal{X}$ from an $F$-finite scheme such that $\mathcal{X}\times_{F_\mathcal{X},\mathcal{X}} U$ is concentrated. 
    It follows from \cite[Lemma 2.5(4)]{Hall/Rydh:2017} that the projection $\mathcal{X}\times_{F_\mathcal{X},\mathcal{X}} U\to U$ is also concentrated. 
    By \cite[Lemma 2.5(2)]{Hall/Rydh:2017}, we see that $F_\mathcal{X}$ is concentrated. 
    Moreover, the morphisms $F_U$ and $F_{U/\mathcal{X}}$ are both locally of finite type. 
    So is the projection $\mathcal{X}\times_{F_\mathcal{X},\mathcal{X}} U\to U$. 
    By descent, we have that $F_\mathcal{X}$ is locally of finite type as desired.

    Now suppose $\mathcal{X}$ is locally Noetherian. 
    We first show $\eqref{prop:F-finite_via_finite_presentation2} \implies \eqref{prop:F-finite_via_finite_presentation4}$. 
    Suppose there is a smooth surjective morphism $U \to \mathcal{X}$ from an $F$-finite scheme and $\mathcal{X}\times_{F_\mathcal{X},\mathcal{X}} U$ is concentrated. 
    Since $U$ is $F$-finite, we observed that the projection $\mathcal{X}\times_{F_\mathcal{X},\mathcal{X}} U\to U$ is locally of finite type. 
    By \Cref{lem:homeomorphism_on_underlying_topological_space}, we see that the diagonal of $F_\mathcal{X}$ is finite, and hence that of $\mathcal{X}\times_{F_\mathcal{X},\mathcal{X}} U\to U$ as well. 
    Applying \cite[Theorem 6.12]{Rydh:2013}, we can find a coarse moduli space $\pi\colon \mathcal{X}\times_{F_\mathcal{X},\mathcal{X}} U\to X$ over $U$ such that $\pi$ is a proper universal homeomorphism and $X$ is an algebraic space that is locally of finite type over $U$. 
    Since $\mathcal{X}\times_{F_\mathcal{X},\mathcal{X}} U$ is concentrated and locally Noetherian, $\pi_\ast$ is exact and preserves coherence. 
    Moreover, we know that $\mathcal{X}\times_{F_\mathcal{X},\mathcal{X}} U\to U$ is a universal homeomorphism. 
    Hence, $X\to U$ is a universal homeomorphism from an algebraic space that is locally of finite type, and thus must be finite. 
    It follows that $F_\mathcal{X}$ is proper and $F_{\mathcal{X},\ast}$ is exact.
    
    It suffices to show $\eqref{prop:F-finite_via_finite_presentation4} \implies \eqref{prop:F-finite_via_finite_presentation3}$. 
    Suppose $F_\mathcal{X}$ is proper and $F_{\mathcal{X},\ast}$ is exact. Choose an arbitrary smooth presentation $U\to\mathcal{X}$.
    It follows that $p_2\colon \mathcal{X}\times_{F_\mathcal{X},\mathcal{X}} U\to U$ is also proper and $p_{2,\ast}$ is exact. By \cite[Lemma 3.2.4]{Olsson:2007}, the relative Frobenius $F_{U/\mathcal{X}}$ is finite. It follows that $F_U$ is proper and thus $U$ is $F$-finite.
    By \cite[Theorem 6.12]{Rydh:2013}, the projection $p_2$ factors through the coarse moduli space
    \begin{displaymath}
    \mathcal{X}\times_{F_\mathcal{X},\mathcal{X}} U\overset{\pi}{\longrightarrow} X\overset{p_2^\prime}{\longrightarrow} U
    \end{displaymath}
    where $\pi$ is a separated universal homeomorphism and $X$ is an algebraic space over $U$ and $\mathcal{O}_X\cong\pi_\ast\mathcal{O}_{\mathcal{X}\times_{F_\mathcal{X},\mathcal{X}} U}$. Since $p_2$ is quasi-compact, quasi-separated, and $p_{2,*}$ is exact, so is $\pi$. It follows that $\mathcal{X}\times_{F_\mathcal{X},\mathcal{X}} U$ is concentrated by \cite[Theorem 3.6]{Abramovich/Olsson/Vistoli:2008}. This finishes the proof.
\end{proof}

\begin{example}
    [van Dobben de Bruyn--Hartl]
    \label{ex:DM_F-finite}
    Let $X$ be an $F$-finite scheme. 
    Any Deligne--Mumford stack that is locally of type over $X$ is $F$-finite. This follows from \Cref{cor:Frobenius_for_DM_is_representable} and \Cref{prop:F-finite_via_finite_presentation}.
\end{example}

\begin{example}
    [Infinite stabilizers]
    \label{ex:Gm_F-finite}
    Consider the classifying stack $B\mathbb{G}_m$ over $\mathbb{F}_p$. By \Cref{ex:nonrepresentability}, the Frobenius morphism on $B\mathbb{G}_m$ is not representable. 
    However, it is $F$-finite because $B\mu_p$ is concentrated and proper over $\mathbb{F}_p$. 
    This is because $\mu_p$ is a finite linearly reductive algebraic group over $\mathbb{F}_p$. 
    Therefore, we see that $B\mathbb{G}_m$ is $F$-finite. 
\end{example}

\begin{example}
    [Necessity of concentrated]
    \label{ex:Ga_not_F-finite}
    Consider the classifying stack $B\mathbb{G}_a$ over $\mathbb{F}_p$. 
    The fibre product of $F_{B\mathbb{G}_a}$, with the natural covering $\operatorname{Spec}(\mathbb{F}_p)\to B\mathbb{G}_a$, is precisely $B\alpha_p$. 
    It is still proper but not concentrated over $\mathbb{F}_p$ because $\alpha_p$ is not linearly reductive in characteristic $p$. 
    Therefore, the Frobenius morphism on $B\mathbb{G}_a$ is not $F$-finite. 
\end{example}

\begin{remark}
    One could remove the concentrated condition in \Cref{def:F_finite}, and instead require the absolute Frobenius morphism to be locally of finite type. 
    In this case, stacks such as $B\mathbb{G}_a$ and $B\alpha_p$ would be $F$-finite. 
    However, the Frobenius pushforward would no longer be exact on these stacks, as demonstrated in \Cref{ex:Ga_not_F-finite}. 
    On the other hand, $F$-finite algebraic stacks need not be concentrated. Indeed, a Deligne--Mumford stack locally of finite type over $\mathbb{F}_p$ need not be concentrated (e.g.\ $B\mathbb{Z}/p\mathbb{Z}$) but must be $F$-finite by \Cref{ex:DM_F-finite}. 
\end{remark}

\begin{proposition}
    \label{lem:F-finite_classifying_stack}
    Let $k$ be a field of characteristic $p>0$ that is $F$-finite. 
    Let $G$ be a group algebraic space locally of finite type over $k$. 
    Let $\overline{G}=G\times_{k}\overline{k}$. 
    Consider the following statements:
    \begin{enumerate}
        \item\label{lem:F-finite_classifying_stack1} The classifying stack $BG$ is $F$-finite.
        \item\label{lem:F-finite_classifying_stack2} The Frobenius kernel of $G$ is linearly reductive.
        \item\label{lem:F-finite_classifying_stack3} The identity component $\overline{G}^\circ$ is diagonalizable.
    \end{enumerate}
    Then we have $\eqref{lem:F-finite_classifying_stack1}\iff\eqref{lem:F-finite_classifying_stack2}\impliedby\eqref{lem:F-finite_classifying_stack3}$. 
    If $G$ is also affine, then $\eqref{lem:F-finite_classifying_stack2}\implies\eqref{lem:F-finite_classifying_stack3}$.
\end{proposition}

\begin{proof}
    For $\eqref{lem:F-finite_classifying_stack1}\iff\eqref{lem:F-finite_classifying_stack2}$, let $K$ be the Frobenius kernel of $G$. We see that $BG\times_{F_{BG},BG}\operatorname{Spec}(k)\cong BK$. 
    By \Cref{prop:F-finite_via_finite_presentation}, $BG$ is $F$-finite if, and only if, $BK$ is concentrated and proper. 
    Applying \cite[Theorem B]{Hall/Rydh:2015}, the latter is equivalent to the statement that $K$ is linearly reductive. 
    In particular, $K$ is finite over $k$.

    We check that $\eqref{lem:F-finite_classifying_stack3}\implies\eqref{lem:F-finite_classifying_stack2}$.
    Suppose $\overline{G}^\circ$ is diagonalizable. 
    Note that the Frobenius kernel $\overline{K}$ of $\overline{G}$ is a closed subgroup of $\overline{G}^\circ$. 
    It follows that the Frobenius kernel is also diagonalizable and thus linearly reductive. 
    Descending along the field extension $\overline{k}/k$, we see that the Frobenius kernel $K$ of $G$ is linearly reductive.
    
    For $\eqref{lem:F-finite_classifying_stack2}\implies\eqref{lem:F-finite_classifying_stack3}$, suppose that $G$ is also affine. 
    We claim that $G^\circ$ does not contain $\alpha_p$ as a subgroup. 
    Indeed, we first note that the Frobenius kernel $K$ is connected and thus contained in $G^\circ$. Since $K$ is linearly reductive, it does not contain $\alpha_p$ as a subgroup. 
    This implies that $G^\circ$ does not contain $\alpha_p$ as a subgroup because $\alpha_p$ is annihilated by the Frobenius map and therefore must be contained in $K$. 
    Since $G^\circ$ is a connected affine algebraic group over $k$, it is of multiplicative type. 
    Thus, $\overline{G}^\circ$ is diagonalizable. 
\end{proof}

\begin{remark}
    \label{rmk:F-finite_fields}
    A field $k$ of characteristic $p>0$ is $F$-finite if and only if $[k:k^p]$ is finite. For example, every perfect field of characteristic $p>0$ is $F$-finite. For an imperfect example, we see that $\mathbb{F}_p(x_1,\ldots,x_n)$ is $F$-finite. However, the field $\mathbb{F}_p(x_1,x_2,\ldots)$ has infinite transcendence degree and thus cannot be $F$-finite. In particular, \Cref{lem:F-finite_classifying_stack} is false over non-$F$-finite fields like $\mathbb{F}_p(x_1,x_2,\ldots)$. Indeed, the trivial group $G=\operatorname{Spec}(\mathbb{F}_p(x_1,x_2,\ldots))$ is linearly reductive but not $F$-finite. 
\end{remark}

\begin{example}
    \label{ex:F-finite_elliptic_curves}
    The affine assumption on $G$ is necessary in \Cref{lem:F-finite_classifying_stack}. 
    To see this, let $E$ be an elliptic curve over $\overline{\mathbb{F}_p}$. 
    Let $\overline{E}$. 
    It is well-known that $E$ is ordinary (resp.\ supersingular) if, and only if, the Frobenius kernel of $E$ is isomorphic to $\mu_p$ (resp.\ $\alpha_p$). 
    By \Cref{lem:F-finite_classifying_stack}, $BE$ is $F$-finite if, and only if, $E$ is ordinary. 
    In particular, this tells us that $F$-finite algebraic stacks need not have affine stabilizers.
\end{example}

\begin{remark}
    $F$-finite algebraic stacks need not be concentrated even with the added affine stabilizers assumption. 
    Recall from \cite[Definition 1.1]{Hall/Rydh:2015} that an affine group scheme $G$ over a field $k$ of characteristic $p>0$ is nice if $G^\circ$ is of multiplicative type and the order of its component group $\pi_0(G)$ is coprime to $p$. 
    \Cref{lem:F-finite_classifying_stack} implies that $BG$ is $F$-finite if $G$ is nice. 
    However, the converse is not true. 
    This is because the order of $\pi_0(G)$ need not be coprime to $p$ for $BG$ to be $F$-finite. 
    Therefore, $F$-finite algebraic stacks need not be concentrated by \cite[Theorem B \& C]{Hall/Rydh:2015}.
\end{remark}

We conclude this section with an example of an $F$-finite algebraic stack that is neither Deligne--Mumford nor concentrated. 

\begin{example}
    \label{ex:F-finite_non_DM_non_tame}
    Let $G=\mathbb{G}_m\times\mathbb{Z}/p\mathbb{Z}$. 
    Consider the classifying stack $BG$ over $\mathbb{F}_p$. 
    We see that $G$ is not an \'{e}tale group scheme over $\mathbb{F}_p$, and thus $BG$ is not a Deligne--Mumford stack. 
    Moreover, $G$ is not linearly reductive over $\mathbb{F}_p$. 
    By \cite[Theorem B and C]{Hall/Rydh:2015}, the stack $BG$ is not concentrated.
    We claim that $BG$ is $F$-finite. 
    To see this, we first observe that $F_{BG}$ is of finite type since $BG$ is of finite type over $\mathbb{F}_p$. 
    It suffices to show $F_{BG}$ is concentrated. 
    By \Cref{lem:F-finite_classifying_stack}, it suffices to show the Frobenius kernel of $G$ is linearly reductive. 
    But we know the Frobenius kernel of $\mathbb{G}_m$ is $\mu_p$ by \Cref{ex:Gm_F-finite} and that of $\mathbb{Z}/p\mathbb{Z}$ is trivial by \Cref{rm:etale_base_change_Frobenii} since $\mathbb{Z}/p\mathbb{Z}$ is \'{e}tale over $\mathbb{F}_p$. 
    Therefore, we see that the Frobenius kernel of $G$ is $\mu_p$ and the claim follows. 
\end{example}

\section{Codepth}
\label{sec:Frobenius_generation_Variation_of_codepth}

We study numerical conditions that are later used to detect Frobenius generation. 

\begin{definition}
    \hfill
    \begin{enumerate}
        \item Consider a Noetherian ring $R$. 
        For any finitely generated $R$-module $M$, set $\beta^R (M)$ to be the minimal number of elements required to generate $M$.
        By abuse of notation, define $\beta^X (E) := \beta^R (H^0(X,E))$ where $X=\operatorname{Spec}(R)$ and $E$ is a coherent $\mathcal{O}_X$-module.
        \item If $R$ is a local ring with maximal ideal $\mathfrak{m}$, then we set $\operatorname{codepth}(R) := \beta^R (\mathfrak{m}) - \operatorname{depth}(R)$. 
        More generally, if $X$ is a Noetherian scheme, we set $\operatorname{codepth}(X)$ to be the supremum of $\operatorname{codepth}(\mathcal{O}_{X,p})$ indexed over $p\in X$.
    \end{enumerate}
\end{definition}

\begin{remark}
    \label{rmk:codepth_facts}
    If $f\colon \operatorname{Spec}(S) \to \operatorname{Spec}(R)$ is a  morphism of affine Noetherian schemes and $E$ is a coherent $\mathcal{O}_{\operatorname{Spec}(R)}$-module, then $\beta^{\operatorname{Spec}(R)} (E) \geq \beta^{\operatorname{Spec}(S)} (f^\ast E)$. 
    Indeed, $f^\ast$ is right exact. 
    If $(R,\mathfrak{m},k)$ is an $F$-finite local ring, then $\beta^R (F_\ast^e R) <\infty$. 
    In fact, an argument similar to \cite[Proposition 1.1]{Kunz:1976} shows that $\beta^R (F_\ast^e R) \geq \beta^R (\mathfrak{m})$. 
    Indeed, let $\mathfrak{m}^{[p^e]}$ be the ideal generated by $r^{p^e}$ for all $r\in \mathfrak{m}$. 
    Then for all $e\geq 1$,
    \begin{displaymath}
        \begin{aligned}
            \beta^R (F_\ast^e R) 
            &\geq \beta^R (F_\ast R) 
            = \operatorname{rank}_k (k\otimes^R F_\ast R)
            = \operatorname{rank}_k (F_\ast (R/\mathfrak{m}^{[p]}))
            \\&\geq \operatorname{rank}_k (F_\ast k) \operatorname{rank}_k (R/\mathfrak{m}^{[p]}) \geq \operatorname{rank}_k (\mathfrak{m}/\mathfrak{m}^2) = \beta^R (\mathfrak{m}).
        \end{aligned}
    \end{displaymath}
    Consequently, we obtain $\operatorname{codepth}(R) \leq \beta^R (F_\ast^e R) - \operatorname{depth}(R)$ for all $e\geq 1$.
\end{remark}

\begin{lemma}
    \label{lem:bound_along_etale}
    Let $f\colon Y \to X$ be an \'{e}tale morphism of $F$-finite schemes. 
    Then for every $e>0$,
    \begin{displaymath}
        \sup_{s\in X} \left\{ \beta^{\mathcal{O}_{X,s}} (F_\ast^e \mathcal{O}_{X,s}) - \operatorname{depth}(\mathcal{O}_{X,s})  \right\} \geq \sup_{t\in Y} \left\{ \beta^{\mathcal{O}_{Y,t}} (F_\ast^e \mathcal{O}_{Y,t}) - \operatorname{depth}(\mathcal{O}_{Y,t}) \right\}.
    \end{displaymath}
\end{lemma}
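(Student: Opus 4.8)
The plan is to reduce the stated inequality of suprema to a pointwise comparison at corresponding points of $Y$ and $X$. Concretely, I would fix $t \in Y$, write $s = f(t) \in X$, and prove
\[
\beta^{\mathcal{O}_{Y,t}}(F_\ast^e \mathcal{O}_{Y,t}) - \operatorname{depth}(\mathcal{O}_{Y,t}) \;\le\; \beta^{\mathcal{O}_{X,s}}(F_\ast^e \mathcal{O}_{X,s}) - \operatorname{depth}(\mathcal{O}_{X,s}).
\]
Bounding the right-hand side by the supremum over all of $X$ and then taking the supremum over $t \in Y$ yields the lemma. Every quantity in sight is finite, since a localization of an $F$-finite scheme is $F$-finite, so \Cref{rmk:codepth_facts} applies to each of these local rings.

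The key input is that Frobenius pushforward commutes with \'etale base change. I would apply \Cref{lem:etale_base_change_Frobenii} with $g = f$ (a morphism of schemes, hence representable by schemes) and $E = \mathcal{O}_X$. Since $X$ and $Y$ are $F$-finite, $F^e$ is finite, hence affine, so $\mathbf{R}F_\ast^e$ has no higher cohomology here; and since $f$ is flat, $\mathbf{L}f^\ast = f^\ast$. Thus the lemma gives an isomorphism of coherent sheaves $F_\ast^e \mathcal{O}_Y \cong f^\ast F_\ast^e \mathcal{O}_X$. Passing to stalks at $t$, and using the elementary identification $(F_\ast^e \mathcal{O}_X)_s \cong F_\ast^e(\mathcal{O}_{X,s})$ (valid because $F^e$ is affine and the identity on underlying spaces by \Cref{lem:homeomorphism_on_underlying_topological_space}), together with the analogous identity on $Y$, I obtain an isomorphism of $\mathcal{O}_{Y,t}$-modules
\[
F_\ast^e(\mathcal{O}_{Y,t}) \;\cong\; F_\ast^e(\mathcal{O}_{X,s}) \otimes_{\mathcal{O}_{X,s}} \mathcal{O}_{Y,t}.
\]

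With this in hand the two terms compare separately. For the minimal number of generators: a generating set of $F_\ast^e(\mathcal{O}_{X,s})$ over $\mathcal{O}_{X,s}$ base changes to a generating set of the tensor product over $\mathcal{O}_{Y,t}$, so $\beta^{\mathcal{O}_{Y,t}}(F_\ast^e \mathcal{O}_{Y,t}) \le \beta^{\mathcal{O}_{X,s}}(F_\ast^e \mathcal{O}_{X,s})$ by the right-exactness observation recorded in \Cref{rmk:codepth_facts}. For depth: $f$ \'etale makes $\mathcal{O}_{X,s} \to \mathcal{O}_{Y,t}$ a flat local homomorphism, so by the flat local base change formula for depth, $\operatorname{depth}(\mathcal{O}_{Y,t}) = \operatorname{depth}(\mathcal{O}_{X,s}) + \operatorname{depth}(\mathcal{O}_{Y,t}/\mathfrak{m}_s \mathcal{O}_{Y,t}) \ge \operatorname{depth}(\mathcal{O}_{X,s})$ (in fact equality, since the closed fibre is the field $\kappa(t)$). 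Subtracting the depth inequality from the $\beta$ inequality gives the displayed pointwise estimate, and the proof concludes.

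I do not expect a serious obstacle; the only care needed is bookkeeping. The two delicate points are (i) verifying that the \'etale base change isomorphism of sheaves really does localize to the displayed tensor identity of local rings and modules — that is, that the $\mathcal{O}_{X,s}$-module structure on $(F_\ast^e \mathcal{O}_X)_s$ matches $F_\ast^e(\mathcal{O}_{X,s})$ — and (ii) invoking the flat-local-base-change behaviour of depth with the correct hypotheses. Should one prefer to bypass \Cref{lem:etale_base_change_Frobenii}, the same sheaf isomorphism follows from the fact that the square built from $F_X$, $F_Y$ and two copies of $f$ is Cartesian whenever $f$ is \'etale, but citing the already-established lemma is cleaner.
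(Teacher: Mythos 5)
Your proposal is correct and follows essentially the same route as the paper: a pointwise comparison at $t$ and $s=f(t)$ using the commutation of Frobenius pushforward with \'etale pullback (\Cref{lem:etale_base_change_Frobenii}), right-exactness of pullback to compare $\beta$, and invariance of depth under \'etale maps. The only cosmetic difference is that you apply the base-change isomorphism globally and then localize, whereas the paper localizes first and applies it to the induced map of local spectra.
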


\begin{proof}
    As $f$ is \'{e}tale, the induced morphism on local rings $\mathcal{O}_{X,f(t)} \to \mathcal{O}_{Y,t}$ is \'{e}tale. 
    Denote by $f_t\colon \operatorname{Spec}(\mathcal{O}_{Y,t}) \to \operatorname{Spec}(\mathcal{O}_{X,f(t)})$ for the associated morphism of $\mathcal{O}_{X,f(t)} \to \mathcal{O}_{Y,t}$ on affine spectra. 
    By flatness of $f_t$, we know that
    \begin{displaymath}
        \beta^{\mathcal{O}_{X,f(t)}} (F_\ast^e \mathcal{O}_{X,f(t)}) \geq \beta^{\mathcal{O}_{Y,t}} (f_t^\ast F_\ast^e \mathcal{O}_{X,f(t)}).
    \end{displaymath}
    However, $f_t$ being \'{e}tale allows us to use \Cref{rm:etale_base_change_Frobenii}. 
    In particular, 
    \begin{displaymath}
        f_t^\ast F_\ast^e \mathcal{O}_{X,f(t)} \cong F_\ast^e f_t^\ast \mathcal{O}_{X,f(t)} \cong F_\ast^e \mathcal{O}_{Y,t},
    \end{displaymath}
    and so,
    \begin{displaymath}
        \beta^{\mathcal{O}_{X,f(t)}} (F_\ast^e \mathcal{O}_{X,f(t)}) \geq \beta^{\mathcal{O}_{Y,t}} (F_\ast^e \mathcal{O}_{Y,t}).
    \end{displaymath}
    Furthermore, \'{e}taleness of $f_t$ implies $\operatorname{depth}(\mathcal{O}_{Y,t}) = \operatorname{depth}(\mathcal{O}_{X,f(t)})$ (see e.g.\ \cite[\href{https://stacks.math.columbia.edu/tag/039T}{Tag 039T}]{stacks-project}). 
    So, it follows that
    \begin{displaymath}
        \beta^{\mathcal{O}_{X,f(t)}} (F_\ast^e \mathcal{O}_{X,f(t)}) - \operatorname{depth}(\mathcal{O}_{X,f(t)}) \geq \beta^{\mathcal{O}_{Y,t}} (F_\ast^e \mathcal{O}_{Y,t}) - \operatorname{depth}(\mathcal{O}_{Y,t}).
    \end{displaymath}
    Now, taking the supremum over all $t\in Y$, we have the desired inequality:
    \begin{displaymath}
        \begin{aligned}
        \sup_{s\in X} \left\{ \beta^{\mathcal{O}_{X,s}} (F_\ast^e \mathcal{O}_{X,s}) - \operatorname{depth}(\mathcal{O}_{X,s})  \right\} 
        &\geq \sup_{t\in Y} \left\{ \beta^{\mathcal{O}_{X,f(t)}} (F_\ast^e \mathcal{O}_{X,f(t)}) - \operatorname{depth}(\mathcal{O}_{X,f(t)})  \right\}
        \\&= \sup_{t\in Y} \left\{ \beta^{\mathcal{O}_{Y,t}} (F_\ast^e \mathcal{O}_{Y,t}) - \operatorname{depth}(\mathcal{O}_{Y,t}) \right\}.
        \end{aligned}
    \end{displaymath}
\end{proof}

\begin{notation}
    \label{not:gamme_function}
    Let $X$ be a Noetherian scheme and $E$ a coherent $\mathcal{O}_X$-module. Set $\gamma (E)$ to be the following,
    \begin{displaymath}
        \sup_{s\in X} \left\{ \beta^{\mathcal{O}_{X,s}} (E_s) - \operatorname{depth}(\mathcal{O}_{X,s})  \right\}.
    \end{displaymath}
\end{notation}

\begin{example}
    \label{ex:beta_depth_to_codepth}
    Let $X$ be an $F$-finite scheme. 
    Using \Cref{rmk:codepth_facts}, it can be checked that $\operatorname{codepth}(X)\leq \gamma (F^e_\ast \mathcal{O}_X)$ for every $e\geq 1$. 
    Indeed, this holds at the level of stalks, and so one can take the supremum over all $s\in X$. 
    Moreover, by arguing affine locally, $\gamma (F^e_\ast \mathcal{O}_X)<\infty$. 
    To see this, quasi-compactness of $X$ makes the problem affine local. Hence, we can assume $X$ is affine. 
    By \cite[Proposition 1.1]{Kunz:1976}, an $F$-finite ring has finite Krull dimension. 
    Hence, $0\leq \operatorname{depth}(\mathcal{O}_{X,p})\leq \dim X$ for all $p\in X$ \cite[\href{https://stacks.math.columbia.edu/tag/00LK}{Tag 00LK}]{stacks-project}. 
    As $X$ is $F$-finite and affine, there exists an $N_e\geq 0$ such that $\beta^{\mathcal{O}_X} (F^e_\ast \mathcal{O}_X)\leq N_e$. 
    Thus, for all $p\in X$, $\beta^{\mathcal{O}_{X,p}} (F^e_\ast \mathcal{O}_{X,p})\leq N_e$.
\end{example}

\section{Frobenius generation}
\label{sec:frob_gen}

We prove our results concerned with Frobenius generation.

\subsection{Schemes}
\label{sec:frob_gen_scheme}

\begin{lemma}
    \label{lem:generation_is_affine_local}
    Let $X$ be a Noetherian separated scheme. 
    Choose $E,G\in D^b_{\operatorname{coh}}(X)$ and $P\in \operatorname{Perf}(X)$ a compact generator for $D_{\operatorname{qc}}(X)$. 
    The following are equivalent:
    \begin{enumerate}
        \item \label{lem:generation_is_affine_local1} $E\in \langle P \otimes^{\mathbf{L}} G \rangle$
        \item \label{lem:generation_is_affine_local2} $\mathbf{L}j^\ast E \in \langle \mathbf{L}j^\ast G \rangle$ for all open immersions $j\colon U \to X$ from a quasi-affine scheme 
        \item \label{lem:generation_is_affine_local3} there exists an affine open cover $U_i$ of $X$ with associated open immersions $s_i \colon U_i \to X$ such that $\mathbf{L}s_i^\ast E \in \langle \mathbf{L}s_i^\ast G \rangle$.
    \end{enumerate}
\end{lemma}

\begin{proof}
    $\eqref{lem:generation_is_affine_local2} \implies \eqref{lem:generation_is_affine_local3}$ is immediate. 
    For $\eqref{lem:generation_is_affine_local1} \implies \eqref{lem:generation_is_affine_local2}$, let $s\colon U\to X$ be an open immersion. 
    Then $\mathbf{L}s^\ast \colon D^b_{\operatorname{coh}}(X) \to D^b_{\operatorname{coh}}(U)$ is a Verdier localization. 
    If $U$ is quasi-affine, then so $s$ by \cite[\href{https://stacks.math.columbia.edu/tag/054G}{Tag 054G}]{stacks-project}. 
    It follows from \cite[Lemma 2.6 and Example 3.11]{Hall/Rydh:2017} (see also \cite[Corollary 3.6]{Lank:2026}), $\mathbf{L}j^\ast P$ is a compact generator of $D_{\operatorname{qc}}(U)$. 
    Since $U$ is a quasi-affine scheme, $\operatorname{Perf}(U)=\langle\mathcal{O}_U\rangle$. 
    It follows that 
    \begin{displaymath}
        \mathbf{L}j^\ast E \in\mathbf{L}j^\ast\langle P \otimes^{\mathbf{L}} G \rangle\subseteq \langle \mathbf{L}j^\ast G\otimes^{\mathbf{L}}\mathbf{L}j^\ast P\rangle=\langle \mathbf{L}j^\ast G\otimes^{\mathbf{L}}\mathcal{O}_U\rangle=\langle \mathbf{L}j^\ast G \rangle
    \end{displaymath}
    as desired. 

    For $\eqref{lem:generation_is_affine_local3} \implies \eqref{lem:generation_is_affine_local1}$, we follow the cocovering argument of \cite[Theorem 5.15]{Rouquier:2008}. 
    Assume there exists an affine open cover $X=\cup^N_{i=1} U_i$ with associated open immersions $s_i \colon U_i \to X$ such that $\mathbf{L}s_i^\ast E \in \langle \mathbf{L}s_i^\ast G \rangle$. 
    For an arbitrary nonempty subset $I\subseteq\{1,\ldots,N\}$, set $U_I=\cap_{i\in I}U_i$ and let $s_I\colon U_I\to X$ be the canonical open immersion. Since $X$ is separated, $U_I$ is an affine scheme for every $I$. 
    We claim that
    \begin{displaymath}
        \mathbf{R}s_{I,\ast}\mathbf{L}s_I^\ast E\subseteq\overline{\langle P \otimes^{\mathbf{L}} G \rangle}_{m_I}
    \end{displaymath}
    for some $m_I>0$ for every $I$. 
    To see this, we first observe that we have $\mathbf{L}s_I^\ast E \in \langle \mathbf{L}s_I^\ast G \rangle$ for every $I$. 
    By \cite[Theorem 6.2]{Neeman:2021a}, there exists $n_I$ such that $\mathbf{R}s_\ast \mathcal{O}_{U_I} \in \overline{\langle  P \rangle}_{n_I}$. By the projection formula, we have
    \begin{displaymath}
        \mathbf{R}s_{I,\ast}\mathbf{L}s_I^\ast G\simeq G\otimes^{\mathbf{L}}\mathbf{R}s_{I,\ast}\mathcal{O}_U\subseteq G\otimes^{\mathbf{L}}\overline{\langle  P \rangle}_{n_I}\subseteq\overline{\langle G\otimes^{\mathbf{L}} P \rangle}_{n_I}.
    \end{displaymath}
    It follows that $\mathbf{R}s_{I,\ast}\mathbf{L}s_I^\ast E\in\overline{\langle G\otimes^{\mathbf{L}} P \rangle}_{m_I}$ for some $m_I$ as claimed. Observe that the subcategories $D_{\operatorname{qc},X\setminus U_i}(X)$ form a cocovering of $D_{\operatorname{qc}}(X)$. 
    By induction on $N$, we see that $E\in\overline{\langle G\otimes^{\mathbf{L}} P \rangle}_{N}$ for some $N>0$. 
    Indeed, we may assume, by induction, that $X=U\cup V$ where $U$ and $V$ are affine schemes. Let $s\colon U\to X$, $t\colon V\to X$ and $r\colon U\cap V\to X$. 
    By \cite[Proposition 5.10]{Rouquier:2008}, we have a Mayer--Vietoris triangle
    \begin{displaymath}
        E\longrightarrow\mathbf{R}s_{\ast}\mathbf{L}s^\ast E\oplus\mathbf{R}t_{\ast}\mathbf{L}t^\ast E\longrightarrow\mathbf{R}r_{\ast}\mathbf{L}r^\ast E\to E[1].
    \end{displaymath}
    By the induction hypothesis, we have $\mathbf{R}s_{\ast}\mathbf{L}s^\ast E\in\overline{\langle G\otimes^{\mathbf{L}} P \rangle}_{m_s}$, $\mathbf{R}t_{\ast}\mathbf{L}t^\ast E\in\overline{\langle G\otimes^{\mathbf{L}} P \rangle}_{m_t}$ and $\mathbf{R}r_{\ast}\mathbf{L}r^\ast E\in\overline{\langle G\otimes^{\mathbf{L}} P \rangle}_{m_r}$ for some $m_s,m_t,m_r>0$. Therefore we have $E\in\overline{\langle G\otimes^{\mathbf{L}} P \rangle}_{N}$ where $N=\max\{m_s,m_t\}+m_r+1$ as desired. 
    Since $E,G\in D_{\operatorname{coh}}^b(X)$, the result follows from \cite[Remark 2.19]{Lank:2024}.
\end{proof}

\begin{lemma}
    \label{lem:residue_field_frobenius_generation}
    Let $(R,\mathfrak{m},k)$ be an $F$-finite local ring of characteristic $p$. Denote by $K$ the Koszul complex on a minimal set of generators for $\mathfrak{m}$. Let $M\in D_{\operatorname{coh}}^b(R)$ such that $\mathfrak{m}\in\operatorname{supp}(M)$. 
    If $e \geq \lfloor \log_p (\gamma(F_\ast R))\rfloor+1$, then $k \in \langle F_\ast^e (M\otimes^\mathbf{L} K) \rangle_1$. In particular, $k \in \langle F^e_\ast (M\otimes^\mathbf{L}\operatorname{Perf}_{\{\mathfrak{m}\}} (R)) \rangle_1$. 
\end{lemma}

\begin{proof}
    By \Cref{rmk:codepth_facts}, $\operatorname{codepth}(R) \leq \gamma(F_\ast R)$. Since $e \geq \lfloor \log_p (\gamma(F_\ast R))\rfloor+1$, we have $p^e> \gamma(F_\ast R) \geq \operatorname{codepth}(R)$. Since $\mathfrak{m}\in\operatorname{supp}(M)$, we have $\mathfrak{m}\in\operatorname{supp}(M\otimes^{\mathbf{L}}K)$. 
    Let $A$ be the animated Koszul $R$-algebra whose underlying complex is $K$.
    Then the proof of \cite[Theorem 4.1.13]{Bhatt/Blickle/Schwede/Tucker:2026} shows the $e$-fold Frobenius map factors over $A\to \pi_0(A)=k$. It follows that $F_\ast^e(M\otimes^{\mathbf{L}}K)$ is a nonzero complex of finite dimensional $k$-vector spaces. Therefore we have $k \in \langle F_\ast^e(M\otimes^{\mathbf{L}}K \rangle_1$ as desired. 
    The last claim follows from the fact $F_\ast^e$ is an endofunctor on $D^b_{\operatorname{coh},\{\mathfrak{m}\}} (R)$.
\end{proof}

\begin{proposition}
    \label{prop:relative_Frobenius_generation_local_ring}
    Let $X=\operatorname{Spec}(R)$ where $R$ is an $F$-finite ring. 
    Consider a closed subset $Z\subseteq X$ and $e \geq \lfloor \log_p (\gamma(F_\ast \mathcal{O}_X))\rfloor + 1$.
    Let $G$ be a compact generator of $D_{\operatorname{qc},Z}(X)$. For every $M\in D_{coh}^b(X)$ with $Z\subseteq\operatorname{supp}(M)$, we have
    \begin{displaymath}
        D^b_{\operatorname{coh},Z}(X)= \langle F_\ast^e (M\otimes^{\mathbf{L}}G) \rangle.
    \end{displaymath}
\end{proposition}

\begin{proof}
    We first assume that $R$ is a local ring. 
    If $Z$ is empty, there is nothing to prove.
    Suppose $Z$ is nonempty.
    We proceed by induction on the Krull dimension of $R$. 
    Denote by $i\colon \operatorname{Spec}(\kappa(q))\to X$ the closed immersion of the unique closed point $q\in X$. 
    Set $j\colon U \to X$ for the open immersion associated to $U:=X\setminus \{q\}$. 
    If $\dim X = 0$, then $X$ is Artinian. 
    Let $K$ be the Koszul complex on a minimal set of generators for $\mathfrak{m}_q$
    By \Cref{lem:residue_field_frobenius_generation}, we have $i_\ast \mathcal{O}_{\operatorname{Spec}(\kappa(q))} \in \langle F_\ast^e (M\otimes^\mathbf{L} K) \rangle$.
    But $K$ is compact and thus $K\in\langle G\rangle$. It follows that $i_\ast \mathcal{O}_{\operatorname{Spec}(\kappa(q))} \in \langle F_\ast^e (M\otimes^\mathbf{L} K)\rangle\subseteq\langle F_\ast^e (M\otimes^\mathbf{L} G)\rangle$, which proves the base case. 

    Now, assume we have proved the claim for $F$-finite local rings of Krull dimension at most $0 \leq N$. 
    Suppose $\dim X = N +1$. 
    By \Cref{lem:relative_denseness_open_immersion}, there exists a Verdier localization sequence
    \begin{displaymath}
        D^b_{\operatorname{coh},\{q\}} (X) \to D^b_{\operatorname{coh},Z}(X) \xrightarrow{\mathbf{L}j^\ast} D^b_{\operatorname{coh},U\cap Z}(U).
    \end{displaymath}
    Observe, from \Cref{lem:residue_field_frobenius_generation}, $i_\ast \mathcal{O}_{\operatorname{Spec}(\kappa(q))} \in \langle F^e_\ast (M\otimes^\mathbf{L}\operatorname{Perf}_{\{q\}} (X)) \rangle\subseteq\langle F_\ast^e (M\otimes^\mathbf{L}G)\rangle$. 
    Since $X$ has a unique closed point $q$, it follows that $\dim U \leq N$. 
    Choose an affine open cover $U_i$ for $U$ with associated open immersions $s_i \colon U_i \to U$. 
    By \Cref{rm:etale_base_change_Frobenii}, $\mathbf{L}s_i^\ast F_\ast^e G \cong F_\ast^e \mathbf{L}s_i^\ast G$. 
    Moreover, \Cref{rmk:relative_perf_dense_localization} says $\langle \mathbf{L}s_i^\ast G \rangle = \operatorname{Perf}_{Z \cap U_i} (U_i)$. 
    Moreover, we have $\gamma(F_*\mathcal{O}_{X,u})\leq\gamma(F_*\mathcal{O}_X)$. Let $E\in D_{\operatorname{coh}^{b},Z\cap U_i}(U_i)$. By induction hypothesis, we have $E_u\in\langle F_\ast^e (M_u\otimes^\mathbf{L} G_u)\rangle$ for every $u\in U_i$.
    Applying \cite[Theorem 3.6]{Letz:2021}, we obtain $\mathbf{L}s_i^\ast E\in \langle F_\ast^e \mathbf{L}s_i^\ast (M\otimes^\mathbf{L}G) \rangle$. 
    Since this is true for every $i$, we see that $\mathbf{L}s^\ast F_\ast^e (M\otimes^\mathbf{L}G)$ is a classical generator for $D^b_{\operatorname{coh},Z\cap U}(U)$ by \Cref{lem:generation_is_affine_local}. because $\mathcal{O}_U$ is a compact generator for $D_{\operatorname{qc}}(U)$. 
    Thus, $F_\ast^e (M\otimes^\mathbf{L}G)$ is a classical generator for $D^b_{\operatorname{coh},Z}(X)$ (see e.g.\ \cite[Proposition 4.3]{Elagin/Lunts/Schnurer:2020}). 
    This finishes the proof.

    For a general $R$, let $E\in D_{\operatorname{coh},Z}^b(R)$. Let $\mathfrak{p}\subseteq R$ be a prime ideal. The local case tells us that $E_\mathfrak{p}\in \langle F_\ast^e (M_\mathfrak{p}\otimes^\mathbf{L}G_\mathfrak{p})\rangle$. Since $\mathfrak{p}$ is arbitrary, we may apply \cite[Theorem 3.6]{Letz:2021} and the result follows.
\end{proof}

\begin{corollary}
    \label{prop:relative_Frobenius_generation_schemes}
    Let $X$ be a Noetherian separated $F$-finite scheme. 
    Let $Z\subseteq X$ be a closed subset. 
    Let $e \geq \lfloor \log_p (\gamma(F_\ast \mathcal{O}_X))\rfloor + 1$. Let $P$ be a compact generator of $D_{\operatorname{qc},Z}(X)$. 
    For every object $G\in D_{\operatorname{coh}}^b(X)$ with $\operatorname{supp}(G)=Z$, we have
    \begin{displaymath}
        D^b_{\operatorname{coh},Z}(X)= \langle F_\ast^e (G\otimes^\mathbf{L}P) \rangle.
    \end{displaymath}
\end{corollary}

\begin{proof}
    Set $H=F_\ast^e (G\otimes^\mathbf{L}P)$. Let $E\in D^b_{\operatorname{coh},Z}(X)$. Choose an affine open covering $U_i$ of $X$ with associated open immersions $s_i\colon U_i\to U$. By \Cref{prop:relative_Frobenius_generation_local_ring}, we have $\mathbf{L}s_i^\ast E\in\langle \mathbf{L}s_i^\ast H\rangle$ for every $i$. Let $Q$ be a compact generator of $D_{\operatorname{qc}}(X)$. By \Cref{lem:generation_is_affine_local}, we see that $E\in\langle Q\otimes^\mathbf{L} H\rangle$. By the projection formula, we have$$Q\otimes^\mathbf{L} H\simeq F_\ast^e(F^{\ast,e}Q\otimes^\mathbf{L}G\otimes^\mathbf{L}P)\simeq F_\ast^e(G\otimes^\mathbf{L}F^{\ast,e}Q\otimes^\mathbf{L}P).$$But $F^{\ast,e}Q\otimes^\mathbf{L}P$ is a perfect complex supported on $Z$. It follows that $F_{\ast,e}Q\otimes^\mathbf{L}P\in\langle P\rangle$ in $D_{\operatorname{qc},Z}(X)$. Therefore we have that$$E\in\langle Q\otimes^\mathbf{L} H\rangle\subseteq \langle F_\ast^e (G\otimes^\mathbf{L}P) \rangle$$and the result follows.
\end{proof}

\begin{remark}
    \hfill
    \begin{enumerate}
        \item As a consequence of \Cref{prop:relative_Frobenius_generation_local_ring} and \Cref{lem:generation_is_affine_local}, \Cref{prop:relative_Frobenius_generation_schemes} recovers the main result of \cite{Ballard/Iyengar/Lank/Mukhopadhyay/Pollitz:2023}. In fact, this occurs by independent methods. See \Cref{sec:intro_what_we_do_frob_gen} for discussion.
        \item  Note that $\langle F_\ast^e (G\otimes^\mathbf{L}P) \rangle$ would rarely be a strong generator of $D^b_{\operatorname{coh},Z}(X)$. 
        See \cite[Remark 8.1]{Neeman:2022}.
    \end{enumerate}
\end{remark}

\subsection{Stacks}
\label{sec:frob_gen_stacks}

\begin{hypothesis}
    \label{hyp:frob_gen}
    Let $\mathcal{X}$ be a Noetherian algebraic stack. 
    We say that $\mathcal{X}$ satisfies \Cref{hyp:frob_gen} if for every $Z\subseteq |\mathcal{X}|$ closed there exists $e_0 \geq 0$ such that $D^b_{\operatorname{coh},Z}(\mathcal{X})= \langle \mathbf{R} F_\ast^e \operatorname{Perf}_Z (\mathcal{X}) \rangle$ for every $e\geq e_0$. 
\end{hypothesis}

\begin{proposition}
    \label{prop:open_immersion}
    Let $\mathcal{X}$ be a Noetherian $F$-finite algebraic stack. 
    Consider an open immersion $j\colon \mathcal{U} \to \mathcal{X}$. 
    If $\mathcal{X}$ satisfies \Cref{hyp:frob_gen}, then so does $\mathcal{U}$.
\end{proposition}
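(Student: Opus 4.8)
The plan is to transport \Cref{hyp:frob_gen} from $\mathcal{X}$ to $\mathcal{U}$ along the Verdier localization $\mathbf{L}j^\ast$ supplied by \Cref{lem:relative_denseness_open_immersion}, using the base-change isomorphism of \Cref{lem:etale_base_change_Frobenii} to match up the generating objects. Fix a closed subset $Z\subseteq|\mathcal{U}|$ and let $\overline{Z}$ be its closure in $|\mathcal{X}|$; since $Z$ is closed in $|\mathcal{U}|$ we have $\overline{Z}\cap|\mathcal{U}| = Z$. Because $j\colon\mathcal{U}\to\mathcal{X}$ is an open immersion — in particular representable by algebraic spaces, of finite presentation, with Noetherian source — \Cref{cor:F-finiteness_etale_covers} shows $\mathcal{U}$ is $F$-finite with Frobenius representable by algebraic spaces, hence concentrated, so $\mathbf{R}F_\ast^e$ is an endofunctor of $D_{\operatorname{qc}}(\mathcal{U})$ which (using \Cref{lem:homeomorphism_on_underlying_topological_space} for supports and properness for boundedness and coherence) carries $\operatorname{Perf}_Z(\mathcal{U})$ into $D^b_{\operatorname{coh},Z}(\mathcal{U})$. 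This already gives $\langle \mathbf{R}F_\ast^e\operatorname{Perf}_Z(\mathcal{U})\rangle \subseteq D^b_{\operatorname{coh},Z}(\mathcal{U})$ for every $e>0$, so it suffices to prove the reverse inclusion for $e\gg 0$.

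By \Cref{hyp:frob_gen} applied to $\mathcal{X}$ and the closed set $\overline{Z}$, there is an $e_0$ with $D^b_{\operatorname{coh},\overline{Z}}(\mathcal{X}) = \langle \mathbf{R}F_\ast^e\operatorname{Perf}_{\overline{Z}}(\mathcal{X})\rangle$ for all $e\geq e_0$. Fix such an $e$ and take $E\in D^b_{\operatorname{coh},Z}(\mathcal{U})$. Since $\mathcal{X}$ is affine-pointed, \Cref{lem:relative_denseness_open_immersion} tells us $\mathbf{L}j^\ast$ restricts to a Verdier localization $D^b_{\operatorname{coh},\overline{Z}}(\mathcal{X})\to D^b_{\operatorname{coh},Z}(\mathcal{U})$; such a functor is essentially surjective, so we may pick $\widetilde{E}\in D^b_{\operatorname{coh},\overline{Z}}(\mathcal{X})$ with $\mathbf{L}j^\ast\widetilde{E}\cong E$. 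As $\widetilde{E}\in \langle \mathbf{R}F_\ast^e\operatorname{Perf}_{\overline{Z}}(\mathcal{X})\rangle$ and $\mathbf{L}j^\ast$ is triangulated, hence preserves the defining filtration of the thick closure, we obtain $E\cong\mathbf{L}j^\ast\widetilde{E}\in \langle \mathbf{L}j^\ast\mathbf{R}F_\ast^e\operatorname{Perf}_{\overline{Z}}(\mathcal{X})\rangle$.

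Finally I would identify the generators: for $P\in\operatorname{Perf}_{\overline{Z}}(\mathcal{X})$, \Cref{lem:etale_base_change_Frobenii} applied to the open immersion $j$ gives $\mathbf{L}j^\ast\mathbf{R}F_\ast^e P\cong\mathbf{R}F_\ast^e\mathbf{L}j^\ast P$, while $\mathbf{L}j^\ast P$ is perfect with $\operatorname{supp}(\mathbf{L}j^\ast P)\subseteq\operatorname{supp}(P)\cap|\mathcal{U}|\subseteq\overline{Z}\cap|\mathcal{U}| = Z$, i.e.\ $\mathbf{L}j^\ast P\in\operatorname{Perf}_Z(\mathcal{U})$. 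Thus $\mathbf{L}j^\ast\mathbf{R}F_\ast^e\operatorname{Perf}_{\overline{Z}}(\mathcal{X})$ lies, up to isomorphism, inside $\mathbf{R}F_\ast^e\operatorname{Perf}_Z(\mathcal{U})$, whence $E\in\langle \mathbf{R}F_\ast^e\operatorname{Perf}_Z(\mathcal{U})\rangle$. Since $E$ and $Z$ were arbitrary and $e_0$ was independent of $E$, this shows $\mathcal{U}$ satisfies \Cref{hyp:frob_gen}. The genuinely nontrivial inputs are the two cited lemmas; I expect \Cref{lem:relative_denseness_open_immersion} to carry the weight, as it is what permits lifting an object of $D^b_{\operatorname{coh},Z}(\mathcal{U})$ back to $\mathcal{X}$, and the one routine thing to track carefully is the behaviour of supports under passing to closures and restricting along $j$.
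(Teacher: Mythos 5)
Your proof is correct and follows essentially the same route as the paper's: pass to the closure $\overline{Z}$, use \Cref{lem:relative_denseness_open_immersion} to realize $D^b_{\operatorname{coh},Z}(\mathcal{U})$ as a Verdier localization of $D^b_{\operatorname{coh},\overline{Z}}(\mathcal{X})$, and commute $\mathbf{L}j^\ast$ past $\mathbf{R}F_\ast^e$ via \Cref{lem:etale_base_change_Frobenii}, checking that pullbacks of perfect complexes land in $\operatorname{Perf}_Z(\mathcal{U})$. The only extra detail you supply (the explicit lift $\widetilde{E}$ and the support bookkeeping) is implicit in the paper's argument, so there is nothing to change.
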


\begin{proof}
    By \Cref{lem:F-finiteness_ascent_descent}, we know that $\mathcal{U}$ is $F$-finite. 
    Choose a closed subset $Z\subseteq |\mathcal{U}|$. 
    Define $W= \overline{Z}$ (i.e.\ the closure of $Z$ in $|\mathcal{X}|$). 
    By \Cref{lem:relative_denseness_open_immersion}, $\mathbf{L} j^\ast$ restricts to a Verdier localization $D^b_{\operatorname{coh},W}(\mathcal{X}) \to D^b_{\operatorname{coh},Z}(\mathcal{U})$. 
    As $\mathcal{X}$ satisfies \Cref{hyp:frob_gen}, it follows that $\langle \mathbf{R} F^e_\ast \operatorname{Perf}_W (\mathcal{X}) \rangle = D^b_{\operatorname{coh},W}(\mathcal{X})$ for $e\gg 0$, and so, $\langle \mathbf{L} j^\ast  \mathbf{R} F^e_\ast \operatorname{Perf}_W (\mathcal{X}) \rangle = D^b_{\operatorname{coh},Z}(\mathcal{U})$. 
    From $j$ being an open immersion, it is \'{e}tale and so \Cref{rm:etale_base_change_Frobenii} tells us $\mathbf{L} j^\ast \mathbf{R} F^e_\ast E \cong \mathbf{R} F_\ast^e \mathbf{L} j^\ast E$ for all $E\in D_{\operatorname{qc}}(\mathcal{X})$. 
    Hence, $\langle \mathbf{R} F^e_\ast \mathbf{L} j^\ast \operatorname{Perf}_W (\mathcal{X}) \rangle = D^b_{\operatorname{coh},Z}(\mathcal{U})$, which shows that $\mathcal{U}$ satisfies \Cref{hyp:frob_gen} because $\mathbf{L} j^\ast \operatorname{Perf}_W (\mathcal{X}) \subseteq \operatorname{Perf}_Z (\mathcal{U})$.
\end{proof}

\begin{lemma}
    \label{lem:finite_cover_quotients_for_support}
    Let $\mathcal{X}$ be a concentrated Noetherian algebraic stack satisfying approximation by compacts. 
    Suppose there is a finite flat surjective morphism $f\colon V \to \mathcal{X}$ from an affine scheme. 
    For any closed subset $Z\subseteq |\mathcal{X}|$, the functor $\mathbf{R}f_\ast \colon D^b_{\operatorname{coh},f^{-1}(Z)}(V) \to D^b_{\operatorname{coh},Z}(\mathcal{X})$ is essentially dense (i.e.\ everything in the target category is a direct summand of something in the essential image).
\end{lemma}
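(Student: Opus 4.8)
<br>

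The goal is to show that $\mathbf{R}f_\ast \colon D^b_{\operatorname{coh},f^{-1}(Z)}(V) \to D^b_{\operatorname{coh},Z}(\mathcal{X})$ hits every object up to the thick closure, i.e. that its essential image classically generates the target. The plan is to first reduce to a statement about single coherent sheaves and then exploit the fact that $f$ is finite, flat, and surjective so that $\mathbf{R}f_\ast = f_\ast$ is exact and the unit/counit of the adjunction $(\mathbf{L}f^\ast, \mathbf{R}f_\ast)$ are well-behaved.

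First I would observe that since $f$ is finite flat, $\mathbf{R}f_\ast$ agrees with the underived $f_\ast$ and is $t$-exact, so it does send $D^b_{\operatorname{coh},f^{-1}(Z)}(V)$ into $D^b_{\operatorname{coh},Z}(\mathcal{X})$ (the support condition uses \Cref{lem:bounded_coherent_sub_additive_on_support_for_flat_pullback} together with $f(f^{-1}(Z))\subseteq Z$). Next, using \Cref{lem:dbcoh_relative_equivalence_tricats} and the standard dévissage along the cohomological filtration (as in the proof of that lemma), it suffices to show that every coherent sheaf $M$ on $\mathcal{X}$ supported on $Z$ lies in the thick subcategory generated by the essential image. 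The key input is the counit $\varepsilon\colon \mathbf{L}f^\ast \circ \mathbf{R}f_\ast \to \operatorname{id}$; however, to apply it I really want to split off $M$ from $f_\ast \mathbf{L}f^\ast M = f_\ast(\mathcal{O}_V \otimes_{\mathcal{O}_{\mathcal{X}}} M) = (f_\ast \mathcal{O}_V)\otimes_{\mathcal{O}_{\mathcal{X}}} M$. Here I would use that $f$ surjective and finite flat implies $f_\ast \mathcal{O}_V$ is a faithful coherent $\mathcal{O}_{\mathcal{X}}$-module that is \emph{locally free}? No --- it need not be locally free since $\mathcal{X}$ may be singular, but it is at least a coherent module whose support is all of $|\mathcal{X}|$. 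The correct move: since $f$ is affine (finite), $V\to\mathcal{X}$ corresponds to a sheaf of algebras $\mathcal{A}=f_\ast\mathcal{O}_V$, and the unit $\mathcal{O}_{\mathcal{X}}\to\mathcal{A}$ is a split monomorphism of $\mathcal{O}_{\mathcal{X}}$-modules precisely when... it is not in general. So instead I would not try to split $M$ directly.

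The better approach, and the one I expect the authors take, is approximation combined with the projection formula. Given a coherent $M$ on $\mathcal{X}$ supported on $Z$, I would first lift it to a compact (perfect on a cover — but since $\mathcal{X}$ is concentrated, compact $=$ perfect) approximation: by the hypothesis that $\mathcal{X}$ satisfies approximation by compacts, for the closed set $Z$ there is an integer $r$ and, treating $M$ as $(m-r)$-pseudocoherent for suitable $m$, a compact $C\in D_{\operatorname{qc},Z}(\mathcal{X})$ with $C\to M$ an isomorphism on $\mathcal{H}^i$ for $i>m$ and surjection at $i=m$; iterating downward gives $M$ as a finite extension of summands of shifts of compacts in $D_{\operatorname{qc},Z}(\mathcal{X})$, i.e. $M\in\langle \operatorname{Perf}_Z(\mathcal{X})\rangle$. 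Then it suffices to hit each $P\in\operatorname{Perf}_Z(\mathcal{X})$. For such $P$, the projection formula gives $\mathbf{R}f_\ast \mathbf{L}f^\ast P \cong P\otimes_{\mathcal{O}_{\mathcal{X}}}^{\mathbf{L}} f_\ast\mathcal{O}_V$, and since $f$ is surjective finite flat, $f_\ast\mathcal{O}_V$ is a coherent sheaf whose support is everything and which contains $\mathcal{O}_{\mathcal{X}}$ as a direct summand after a finite faithfully flat (indeed finite flat surjective) base change — hmm, again splitting is the issue. I would instead argue that $\mathbf{L}f^\ast P\in \operatorname{Perf}_{f^{-1}(Z)}(V)$ (perfection is preserved by pullback, support by \Cref{lem:bounded_coherent_sub_additive_on_support_for_flat_pullback}), so $\mathbf{R}f_\ast\mathbf{L}f^\ast P = P\otimes^{\mathbf{L}}f_\ast\mathcal{O}_V$ lies in the essential image; and then the counit $P\otimes^{\mathbf{L}}f_\ast\mathcal{O}_V \to P$ — whether this has a section — is where faithfulness enters: since $f$ is finite flat surjective, $f_\ast\mathcal{O}_V$ is a faithful $\mathcal{O}_{\mathcal{X}}$-algebra, hence by a standard argument $P$ is a direct summand of $P\otimes^{\mathbf{L}}f_\ast\mathcal{O}_V$ in the derived category — explicitly, because $\mathcal{O}_{\mathcal{X}}\to f_\ast\mathcal{O}_V$ admits a retraction Zariski-locally after picking a nonzerodivisor in each fiber? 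This is exactly the subtle point.

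The main obstacle, then, is establishing that the counit $\mathbf{R}f_\ast\mathbf{L}f^\ast P\to P$ (equivalently $\mathcal{O}_{\mathcal{X}}\to f_\ast\mathcal{O}_V$ tensored with $P$) splits, or finding a way around it; for a \emph{finite flat surjective} $f$ this is the assertion that $\mathcal{O}_{\mathcal{X}}$ is a pure (even a direct) submodule of $f_\ast\mathcal{O}_V$, which does hold — one can check it after smooth base change to the affine scheme $U$ (or directly: $f_\ast\mathcal{O}_V$ is locally free of positive rank over the smooth cover, and locally free of positive rank contains the trivial line bundle as a summand only after further localization, but faithful flatness still gives that $\mathcal{O}\to f_\ast\mathcal{O}_V$ is a split injection of modules when... it is split as a map of $\mathcal{O}_{\mathcal{X}}$-modules because it is a finite flat ring map with a section? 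No). I would resolve this by the classical fact that for a finite locally free surjective morphism, $\mathcal{O}_{\mathcal{X}}$ is a direct summand of $f_\ast\mathcal{O}_V$ as an $\mathcal{O}_{\mathcal{X}}$-module — this is false in general (e.g. $\mathbb{F}_p[t]\to\mathbb{F}_p[t^{1/p}]$ if $p$ divides the rank the trace is zero), so the clean statement must instead use that $P\in\operatorname{Perf}_Z$ together with $f_\ast\mathcal{O}_V$ being faithful over $\mathcal{X}$ puts $P$ in $\langle P\otimes^{\mathbf{L}}f_\ast\mathcal{O}_V\rangle$ by a thickness argument on the ring $f_\ast\mathcal{O}_V$-modules — I would cite the standard descent/faithful-module generation lemma for this. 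So the proof I propose: (1) $\mathbf{R}f_\ast = f_\ast$ is exact and preserves the support conditions; (2) reduce the claim to showing $\operatorname{Perf}_Z(\mathcal{X})$ lies in the essential image up to thick closure, using approximation by compacts and the dévissage of \Cref{lem:dbcoh_relative_equivalence_tricats}; (3) for $P\in\operatorname{Perf}_Z(\mathcal{X})$, note $\mathbf{L}f^\ast P\in\operatorname{Perf}_{f^{-1}(Z)}(V)$ and $\mathbf{R}f_\ast\mathbf{L}f^\ast P\cong P\otimes^{\mathbf{L}}f_\ast\mathcal{O}_V$ by the projection formula; (4) conclude $P\in\langle\mathbf{R}f_\ast\mathbf{L}f^\ast P\rangle$ because $f_\ast\mathcal{O}_V$ is a faithful coherent $\mathcal{O}_{\mathcal{X}}$-algebra (so $P$ is built from $P\otimes^{\mathbf{L}}f_\ast\mathcal{O}_V$ by finitely many cones/summands — this is the crux and uses concentratedness to stay within perfect complexes). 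The hard step is unquestionably (4), controlling the finite-flat-but-not-split situation in positive characteristic.
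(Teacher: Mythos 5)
Your step (4) is a genuine gap, and the detour that led you there rests on a false counterexample. The paper's proof is a three-line argument built on exactly the point you talked yourself out of: for a finite flat \emph{surjective} morphism $f\colon V\to\mathcal{X}$, the unit $E\to\mathbf{R}f_\ast\mathbf{L}f^\ast E$ is a split monomorphism for every $E\in D_{\operatorname{qc}}(\mathcal{X})$ (the paper cites \cite[Lemma 5.7]{DeDeyn/Lank/ManaliRahul/Peng:2025} for this); combined with \Cref{lem:bounded_coherent_sub_additive_on_support_for_flat_pullback} and flatness, which put $\mathbf{L}f^\ast E$ in $D^b_{\operatorname{coh},f^{-1}(Z)}(V)$, every $E\in D^b_{\operatorname{coh},Z}(\mathcal{X})$ is a direct summand of an object in the essential image. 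Your proposed counterexample to the splitting, $\mathbb{F}_p[t]\to\mathbb{F}_p[t^{1/p}]$, is wrong: the target is free over the source with basis $1,t^{1/p},\dots,t^{(p-1)/p}$, so projection onto the first basis vector retracts the inclusion. Vanishing of the trace rules out \emph{one particular} retraction, not all of them. In general, for $R\to S$ finite, flat (hence projective) and faithfully flat with $R$ Noetherian local, $1\neq 0$ in $S\otimes_R\kappa$, so $1$ is a minimal generator of $S$ and Nakayama splits $R\to S$; globally, $S/R$ is then locally a summand of a projective, hence projective, so $\operatorname{Ext}^1_R(S/R,R)=0$ and the splitting exists globally. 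This is the content of the cited lemma in the stacky setting.

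Having (incorrectly) discarded the splitting, your fallback argument never closes. The claim that $f_\ast\mathcal{O}_V$ being a ``faithful coherent $\mathcal{O}_{\mathcal{X}}$-algebra'' forces $P\in\langle P\otimes^{\mathbf{L}}f_\ast\mathcal{O}_V\rangle$ is precisely what needs proof; faithfulness of a module alone does not yield thick-subcategory generation (you would need the splitting, or a ghost/level argument you neither state nor cite), and you concede as much by calling it ``the crux.'' The preliminary reductions are also heavier than needed and partly unjustified: the dévissage to single coherent sheaves and the passage to $\operatorname{Perf}_Z(\mathcal{X})$ via approximation by compacts are not required (and the assertion $M\in\langle\operatorname{Perf}_Z(\mathcal{X})\rangle$ from iterated approximation is itself not immediate). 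In the paper, approximation by compacts and concentratedness enter only to guarantee that $\mathbf{L}f^\ast E$ again has bounded coherent cohomology supported on $f^{-1}(Z)$; the essential density is then an immediate consequence of the split unit.
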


\begin{proof}
    By \cite[Lemma 6.2]{DeDeyn/Lank/ManaliRahul/Peng:2025}, the unit morphism of derived pullback/pushforward $E \to \mathbf{R}f_\ast\mathbf{L}f^\ast E$ splits for all $E\in D_{\operatorname{qc}}(\mathcal{X})$. 
    Choose $E\in D^b_{\operatorname{coh},Z}(\mathcal{X})$. 
    Since $f$ is finite and faithfully flat, we have $\mathbf{L}f^\ast E \in D^b_{\operatorname{coh},f^{-1}(Z)}(V)$. 
    Yet, flatness of $f$ implies $\mathbf{L}f^\ast E$ has bounded cohomology. Thus, the claim follows.
\end{proof}

\begin{remark}
    \label{rm:finite_cover_quotients_for_support}
    \Cref{lem:finite_cover_quotients_for_support} is false if $\mathcal{X}$ is not concentrated. 
    For example, let $V=\operatorname{Spec}(\mathbb{F}_p)$, $\mathcal{X}=B\mathbb{Z}/p\mathbb{Z}$ and $f\colon V\to\mathcal{X}$ be the canonical covering. 
    Then the natural morphism $\mathcal{O_X}\to f_\ast\mathcal{O}_V$ does not split. 
    Indeed, we see that $f_\ast\mathcal{O}_V$ is the regular representation of $\mathbb{Z}/p\mathbb{Z}$ over $\mathbb{F}_p$ and it does not contain $\mathcal{O}_{\mathcal{X}}$, which is the trivial representation of $\mathbb{Z}/p\mathbb{Z}$.
\end{remark}

\begin{proposition}
    \label{prop:finite_cover}
    Let $\mathcal{X}$ be a concentrated $F$-finite algebraic stack satisfying approximation by compacts.
    If there is a finite flat surjective morphism $f\colon V \to \mathcal{X}$ from an affine scheme, then $\mathcal{X}$ satisfies \Cref{hyp:frob_gen}.
\end{proposition}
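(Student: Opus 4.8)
The plan is to transport the scheme-level result \Cref{prop:relative_Frobenius_generation_schemes} from the affine cover $V$ down to $\mathcal{X}$ along the finite flat morphism $f$, exploiting the fact that the absolute Frobenius is natural in the stack. First I would check that $V$ is $F$-finite. Since $f$ is finite it is representable by schemes, separated, and of finite type, and as $\mathcal{X}$ is Noetherian this makes $f$ of finite presentation; thus \Cref{cor:F-finiteness_etale_covers} applies (using that $F\colon\mathcal{X}\to\mathcal{X}$ is representable by algebraic spaces) and shows $V$ is an $F$-finite scheme. By \Cref{lem:sharpened_relative_Frobenius_generation_schemes} the quantity $\gamma(F_\ast\mathcal{O}_V)$ is finite, so set $e_0 := \lceil\log_p\gamma(F_\ast\mathcal{O}_V)\rceil$, which depends only on $V$. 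Fix a closed $Z\subseteq|\mathcal{X}|$, put $Z' := f^{-1}(Z)\subseteq V$, and fix any $e\geq e_0$; by \Cref{prop:relative_Frobenius_generation_schemes} there is $G\in\operatorname{Perf}_{Z'}(V)$ with $D^b_{\operatorname{coh},Z'}(V) = \langle F^e_{V,\ast}G\rangle$.

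Next I would push forward along $\mathbf{R}f_\ast$. By \Cref{lem:finite_cover_quotients_for_support}, $\mathbf{R}f_\ast$ restricts to an essentially dense functor $D^b_{\operatorname{coh},Z'}(V)\to D^b_{\operatorname{coh},Z}(\mathcal{X})$; since $\mathbf{R}f_\ast$ is triangulated, essential density gives $\langle\mathbf{R}f_\ast(D^b_{\operatorname{coh},Z'}(V))\rangle = D^b_{\operatorname{coh},Z}(\mathcal{X})$, and $\langle\mathbf{R}f_\ast\langle F^e_{V,\ast}G\rangle\rangle = \langle\mathbf{R}f_\ast F^e_{V,\ast}G\rangle$, whence
\[
D^b_{\operatorname{coh},Z}(\mathcal{X}) = \langle \mathbf{R}f_\ast F^e_{V,\ast}G\rangle.
\]
Now I would use that the absolute Frobenius is functorial, so $f\circ F_V = F_{\mathcal{X}}\circ f$ and therefore $\mathbf{R}f_\ast\circ\mathbf{R}F_{V,\ast}\cong\mathbf{R}F_{\mathcal{X},\ast}\circ\mathbf{R}f_\ast$; iterating this and using that $f$ and $F_V$ are finite (hence have exact pushforward) yields $\mathbf{R}f_\ast F^e_{V,\ast}G\cong\mathbf{R}F^e_{\mathcal{X},\ast}(f_\ast G)$. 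Since $f$ is finite flat of finite presentation, $f_\ast$ carries $\operatorname{Perf}(V)$ into $\operatorname{Perf}(\mathcal{X})$ (check smooth-locally on $\mathcal{X}$, where $f_\ast\mathcal{O}_V$ is finite locally free), and $\operatorname{supp}(f_\ast G) = f(\operatorname{supp}(G))\subseteq f(Z')\subseteq Z$, so $f_\ast G\in\operatorname{Perf}_Z(\mathcal{X})$. Finally $F$ is proper ($\mathcal{X}$ is $F$-finite) and is the identity on $|\mathcal{X}|$ by \Cref{lem:homeomorphism_on_underlying_topological_space}, so $\mathbf{R}F^e_{\mathcal{X},\ast}\operatorname{Perf}_Z(\mathcal{X})\subseteq D^b_{\operatorname{coh},Z}(\mathcal{X})$, and hence
\[
D^b_{\operatorname{coh},Z}(\mathcal{X}) = \langle\mathbf{R}F^e_{\mathcal{X},\ast}(f_\ast G)\rangle \subseteq \langle\mathbf{R}F^e_{\mathcal{X},\ast}\operatorname{Perf}_Z(\mathcal{X})\rangle \subseteq D^b_{\operatorname{coh},Z}(\mathcal{X}),
\]
forcing equality. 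As $e\geq e_0$ was arbitrary and $e_0$ does not depend on $Z$, this verifies \Cref{hyp:frob_gen}.

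The main obstacle I anticipate is assembling the compatibility $\mathbf{R}f_\ast\circ\mathbf{R}F_{V,\ast}^e\cong\mathbf{R}F_{\mathcal{X},\ast}^e\circ\mathbf{R}f_\ast$ cleanly—either by using the relative Frobenius squares of \Cref{def:absolute_Frobenius} coherently, or by appealing directly to naturality of the absolute Frobenius on $\mathbb{F}_p$-stacks—together with the verification that $f_\ast$ preserves perfectness on the stack. Everything else is either one of the standing hypotheses (concentrated, approximation by compacts, $F$ representable) or a formal consequence of \Cref{prop:relative_Frobenius_generation_schemes} and \Cref{lem:finite_cover_quotients_for_support}.
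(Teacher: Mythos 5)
Your proposal is correct and follows essentially the same route as the paper: establish $F$-finiteness of $V$, invoke \Cref{prop:relative_Frobenius_generation_schemes} on the affine cover, push forward along the essentially dense $\mathbf{R}f_\ast$ of \Cref{lem:finite_cover_quotients_for_support}, and commute $\mathbf{R}f_\ast$ past the Frobenius pushforwards. The only (harmless) variations are that you deduce $F$-finiteness of $V$ from \Cref{cor:F-finiteness_etale_covers} rather than the paper's base-change argument via \Cref{prop:F-finite_via_finite_presentation}, and that you track a single generator $f_\ast G$ with an explicit bound $e_0$ instead of the whole class $\mathbf{R}f_\ast F_\ast^e\operatorname{Perf}_{f^{-1}(Z)}(V)$.
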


\begin{proof}
    Note that $V$ is $F$-finite by \Cref{lem:F-finiteness_ascent_descent}. Let $Z\subseteq |\mathcal{X}|$ be a closed subset. 
    By \Cref{prop:relative_Frobenius_generation_local_ring}, $V$ satisfies \Cref{hyp:frob_gen}. 
    As $\mathcal{X}$ is concentrated and satisfies approximation by compacts, \Cref{lem:finite_cover_quotients_for_support} implies $\mathbf{R} f_\ast \colon D^b_{\operatorname{coh},f^{-1}(Z)}(V) \to D^b_{\operatorname{coh},Z}(\mathcal{X})$ is essentially dense. 
    We can find an $e>0$ such that 
    \begin{displaymath}
        \langle F_\ast^e \operatorname{Perf}_{f^{-1} (Z)} (V) \rangle = D^b_{\operatorname{coh},f^{-1}(Z)}(V).
    \end{displaymath}
    Hence, we have that $\langle \mathbf{R} f_\ast F_\ast^e \operatorname{Perf}_{f^{-1} (Z)} \rangle = D_{\operatorname{coh},Z}^b(\mathcal{X})$. 
    However, there is a natural isomorphism of functors $\mathbf{R} f_\ast F_\ast^e \to \mathbf{R} F_\ast^e \mathbf{R} f_\ast$ and $\mathbf{R}f_\ast\operatorname{Perf}_{f^{-1} (Z)} (V) \subseteq \operatorname{Perf}_Z(\mathcal{X})$ since $f$ is finite and flat.
    It follows that $\mathcal{X}$ satisfies \Cref{hyp:frob_gen}. 
\end{proof}

\begin{remark}
    Recall an \textbf{\'{e}tale neighborhood} is an open immersion $i\colon \mathcal{U} \to \mathcal{X}$ and \'{e}tale morphism $\mathcal{Y} \xrightarrow{f} \mathcal{X}$ which is an isomorphism over $|\mathcal{X}|\setminus |\mathcal{U}|$ (endowed with reduced induced substack structure).
\end{remark}

\begin{proposition}
    \label{prop:etale_nbhd}
    Let $\mathcal{X}$ be a concentrated $F$-finite algebraic stack. 
    Consider an \'{e}tale neighborhood
    \begin{displaymath}
        \begin{tikzcd}
            f^{-1}(\mathcal{U})\arrow[r,"i"]\arrow[d,"g"] & \mathcal{Y}\arrow[d,"f"]\\
            \mathcal{U}\arrow[r,"j"] & \mathcal{X}
        \end{tikzcd}
    \end{displaymath}
    where $j$ is an open immersion and $f$ is a quasi-compact morphism which is representable by algebraic spaces. 
    Suppose both $\mathcal{X}$ and $\mathcal{Y}$ satisfy the Thomason condition. 
    If $\mathcal{U}$ and $\mathcal{Y}$ satisfies \Cref{hyp:frob_gen}, then so does $\mathcal{X}$. 
\end{proposition}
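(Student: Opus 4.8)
The idea is to run an étale-dévissage argument relative to a closed subset, reducing the problem on $\mathcal{X}$ to the already-known cases on the open substack $\mathcal{U}$ and on the étale chart $\mathcal{Y}$. Fix a closed subset $Z\subseteq|\mathcal{X}|$. Write $W=|\mathcal{X}|\setminus|\mathcal{U}|$ for the closed complement, over which $f$ is an isomorphism. The key structural input is that $D^b_{\operatorname{coh},Z}(\mathcal{X})$ can be glued from $D^b_{\operatorname{coh},Z\cap W}(\mathcal{X})$ — equivalently $D^b_{\operatorname{coh}, f^{-1}(Z\cap W)}(\mathcal{Y})$ via the isomorphism — and $D^b_{\operatorname{coh},Z\cap|\mathcal{U}|}(\mathcal{U})$, obtained by the Verdier localization from \Cref{lem:relative_denseness_open_immersion}. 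So I would first establish a semiorthogonal-type or generation-glueing statement: if $G_W$ classically generates the kernel piece (objects supported on $Z\cap W$) and $G_U$ classically generates $D^b_{\operatorname{coh},Z\cap|\mathcal{U}|}(\mathcal{U})$, then suitable lifts assemble to a classical generator of $D^b_{\operatorname{coh},Z}(\mathcal{X})$. This is the "two-out-of-three"/dévissage step, and it is where approximation-by-compacts / the Thomason condition is used to produce perfect lifts of objects supported on $W$.

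Next I would feed in the two hypotheses. Since $\mathcal{U}$ satisfies \Cref{hyp:frob_gen}, for $e\gg 0$ there is $G_U\in\operatorname{Perf}_{Z\cap|\mathcal{U}|}(\mathcal{U})$ with $\langle \mathbf{R}F_\ast^e G_U\rangle = D^b_{\operatorname{coh},Z\cap|\mathcal{U}|}(\mathcal{U})$. For the piece supported on $W$: because $f$ is an isomorphism over $W$, restriction along $f$ identifies the subcategory of $D^b_{\operatorname{coh},f^{-1}(Z)}(\mathcal{Y})$ supported on $f^{-1}(Z\cap W)$ with the subcategory of $D^b_{\operatorname{coh},Z}(\mathcal{X})$ supported on $Z\cap W$, and this identification is compatible with $\mathbf{R}F_\ast^e$ via \Cref{lem:etale_base_change_Frobenii} applied to the open immersion $i$ (or directly, since $f$ is an isomorphism there, $\mathbf{R}F_\ast^e$ on $\mathcal{Y}$ restricted to that closed piece matches $\mathbf{R}F_\ast^e$ on $\mathcal{X}$). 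Since $\mathcal{Y}$ satisfies \Cref{hyp:frob_gen} for the closed set $f^{-1}(Z)$, we get $G_{\mathcal{Y}}\in\operatorname{Perf}_{f^{-1}(Z)}(\mathcal{Y})$ with $\langle\mathbf{R}F_\ast^e G_{\mathcal{Y}}\rangle=D^b_{\operatorname{coh},f^{-1}(Z)}(\mathcal{Y})$; pushing forward, $\mathbf{R}f_\ast \mathbf{R}F_\ast^e G_{\mathcal{Y}}\cong \mathbf{R}F_\ast^e\mathbf{R}f_\ast G_{\mathcal{Y}}$ (base-change/commutation of $F$ with $f$), and $\mathbf{R}f_\ast G_{\mathcal{Y}}\in\operatorname{Perf}_Z(\mathcal{X})$ since $f$ is representable, finite-type and — being étale with a section over $W$ — pushforward preserves perfection in the relevant range. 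A subtlety: $\mathbf{R}f_\ast$ of something supported only on $f^{-1}(Z)$ need not be supported on $Z\cap W$; but I only need that the part of the glueing lying over $W$ is hit, and there $f$ is an isomorphism, so this is fine.

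Finally I would assemble: take $G=\mathbf{R}f_\ast G_{\mathcal{Y}}\oplus \widetilde{G_U}$ where $\widetilde{G_U}\in\operatorname{Perf}_Z(\mathcal{X})$ is a perfect lift of $G_U$ along the Verdier localization $\mathbf{L}j^\ast$ (such a lift exists because $\operatorname{Perf}_Z(\mathcal{X})\to\operatorname{Perf}_{Z\cap|\mathcal{U}|}(\mathcal{U})$ is essentially surjective up to summands/shifts — use extension of perfect complexes across an open immersion, valid under the Thomason condition). Then $\mathbf{R}F_\ast^e G$ classically generates $D^b_{\operatorname{coh},Z}(\mathcal{X})$: given $E\in D^b_{\operatorname{coh},Z}(\mathcal{X})$, its restriction $\mathbf{L}j^\ast E$ lies in $\langle\mathbf{R}F_\ast^e G_U\rangle$, so after subtracting a suitable cone built from $\mathbf{R}F_\ast^e \widetilde{G_U}$ we reduce to an object supported on $Z\cap W$, which lies in $\langle \mathbf{R}F_\ast^e \mathbf{R}f_\ast G_{\mathcal{Y}}\rangle$ by the identification over $W$; hence $E\in\langle\mathbf{R}F_\ast^e G\rangle$. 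The main obstacle is the glueing/dévissage bookkeeping — precisely, controlling supports and perfection of $\mathbf{R}f_\ast G_{\mathcal{Y}}$ and producing the perfect lift $\widetilde{G_U}$ — together with checking that a single $e$ works for both pieces simultaneously (take the max of the two thresholds), which is what lets the "$e\gg 0$" in \Cref{hyp:frob_gen} for $\mathcal{X}$ go through.
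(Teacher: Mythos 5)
Your strategy coincides with the paper's: a d\'{e}vissage of $D^b_{\operatorname{coh},Z}(\mathcal{X})$ along the Verdier localization of \Cref{lem:relative_denseness_open_immersion}, with \Cref{hyp:frob_gen} on $\mathcal{U}$ handling the quotient (commuting $\mathbf{R}F_\ast^e$ past $\mathbf{L}j^\ast$ via \Cref{lem:etale_base_change_Frobenii}, and lifting perfect complexes across $j$ up to summands by Thomason--Neeman) and \Cref{hyp:frob_gen} on $\mathcal{Y}$ handling the kernel piece supported on $Z\cap W$, followed by the standard two-step assembly.

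The one step that does not survive scrutiny as written is your treatment of the closed piece. For a quasi-compact \'{e}tale $f$ that is not finite, $\mathbf{R}f_\ast$ does not preserve perfection --- already $\mathbf{R}j_\ast\mathcal{O}_{\mathcal{U}}$ along an open immersion is rarely perfect --- so the assertion that $\mathbf{R}f_\ast G_{\mathcal{Y}}\in\operatorname{Perf}_Z(\mathcal{X})$ for $G_{\mathcal{Y}}$ a classical generator of $\operatorname{Perf}_{f^{-1}(Z)}(\mathcal{Y})$ is unjustified; ``pushforward preserves perfection in the relevant range'' is not an argument. You flag the subtlety yourself, and the repair is exactly what the paper does: apply \Cref{hyp:frob_gen} on $\mathcal{Y}$ only to the closed subset $f^{-1}(Z\cap W)$, which lies in the locus over which $f$ is an isomorphism, and transport the resulting generation statement through the excision equivalence $D_{\operatorname{qc},f^{-1}(Z\cap W)}(\mathcal{Y})\simeq D_{\operatorname{qc},Z\cap W}(\mathcal{X})$ induced by the adjunction for $f$. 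This equivalence commutes with $\mathbf{R}F_\ast^e$ and matches compacts with compacts (hence, by concentratedness and the Thomason condition, perfects with perfects), so one never pushes a perfect complex forward along all of $f$. With that substitution --- and with the paper's device of working with the thick subcategory generated by the image of the whole of $\operatorname{Perf}_Z(\mathcal{X})$ rather than a single chosen lift, the single-generator form being recovered afterwards from the Thomason condition --- your plan matches the paper's proof.
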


\begin{proof}
    By \Cref{lem:F-finiteness_ascent_descent}, both $\mathcal{U}$ and $\mathcal{Y}$ are $F$-finite. Choose a closed subset $W\subseteq |\mathcal{X}|$. 
    Set $Z:=|\mathcal{X}|\setminus |\mathcal{U}|$. 
    Using \Cref{lem:relative_denseness_open_immersion}, there is a Verdier localization
    \begin{displaymath}
        D^b_{\operatorname{coh}Z\cap W}(\mathcal{X}) \to D^b_{\operatorname{coh},W}(\mathcal{X}) \to D^b_{\operatorname{coh},W\cap |\mathcal{U}|}(\mathcal{U}).
    \end{displaymath}
    Also, there is another Verdier localization
    \begin{displaymath} 
       D_{\operatorname{qc},Z\cap W}(\mathcal{X}) 
       \xrightarrow{i_\ast} D_{\operatorname{qc},W}(\mathcal{X}) 
       \xrightarrow{\mathbf{L} j^\ast} D_{\operatorname{qc},W\cap |\mathcal{U}|}(\mathcal{U})
    \end{displaymath}
    where $i_\ast$ is the natural inclusion (see e.g.\ the proof of \cite[Lemma 5.9]{DeDeyn/Lank/ManaliRahul/Peng:2025}). 
    From \cite[Theorem 2.1]{Neeman:1996}, it follows $\mathbf{L} j^\ast\colon \operatorname{Perf}_W (\mathcal{X}) \to \operatorname{Perf}_{W\cap |\mathcal{U}|}(\mathcal{U})$ is a Verdier localization up to direct summands.
    By \cite[Theorem 4.2]{Hall/Rydh:2023}, there is an equivalence $D_{\operatorname{qc},f^{-1}(Z)}(\mathcal{Y}) \to D_{\operatorname{qc},Z}(\mathcal{X})$ induced by the derived pushforward/pullback adjunction along $f$.
    It follow from the hypothesis that $D^b_{\operatorname{coh},Z\cap W}(\mathcal{X}) = \langle  \mathbf{R} F_\ast^e \operatorname{Perf}_{Z\cap W} (\mathcal{X}) \rangle$ if $e\gg 0$. 
    Moreover, from our hypothesis on $\mathcal{U}$, we see that if $e\gg 0$, then
    \begin{displaymath}
        \langle \mathbf{R} F_\ast^e \mathbf{L} j^\ast \operatorname{Perf}_W (\mathcal{X}) \rangle = D^b_{\operatorname{coh},W\cap |\mathcal{U}|}(\mathcal{U}).
    \end{displaymath}
    However, $j$ is \'{e}tale, so $\mathbf{R} F_\ast^e \mathbf{L} j^\ast E \cong \mathbf{L} j^\ast \mathbf{R} F_\ast^e E$ for each $E\in \operatorname{Perf}_W (\mathcal{X})$ via \Cref{rm:etale_base_change_Frobenii}. 
    Hence, $\langle \mathbf{L} j^\ast \mathbf{R} F_\ast^e \operatorname{Perf}_W (\mathcal{X}) \rangle = D^b_{\operatorname{coh},W\cap |\mathcal{U}|}(\mathcal{U})$. 
    It follows that
    \begin{displaymath}
        D^b_{\operatorname{coh},W}(\mathcal{X}) \subseteq \langle \mathbf{R} F_\ast^e \operatorname{Perf}_{Z\cap W} (\mathcal{X}) \oplus \mathbf{R} F_\ast^e \operatorname{Perf}_W (\mathcal{X}) \rangle = \langle \mathbf{R} F_\ast^e \operatorname{Perf}_W (\mathcal{X}) \rangle,
    \end{displaymath}
    see e.g.\ \cite[Lemma's 5.5 \& 5.6]{DeDeyn/Lank/ManaliRahul:2025}. 
    Thus, the claim follows.
\end{proof}

\begin{theorem}
    \label{thm:Frobenius_generation}
    Let $\mathcal{S}$ be a Noetherian $F$-finite algebraic stack with quasi-finite and separated diagonal. 
    If $\mathcal{S}$ is concentrated, then $\mathcal{S}$ satisfies \Cref{hyp:frob_gen}. 
    In fact, for any $Z\subseteq |\mathcal{S}|$ closed and $e \gg 0$, there is a compact generator $G$ of $D_{\operatorname{qc},Z} (\mathcal{S})$ such that $D^b_{\operatorname{coh},Z}(\mathcal{S})= \langle \mathbf{R} F_\ast^e G \rangle$.
\end{theorem}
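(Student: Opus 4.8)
The plan is to show that $\mathcal{S}$ satisfies \Cref{hyp:frob_gen} by \'{e}tale d\'{e}vissage \`{a} la Hall--Rydh \cite{Hall/Rydh:2017}, feeding in the four preceding propositions as the inductive inputs, and then to upgrade to the displayed statement using exactness of $\mathbf{R} F_\ast^e$. First I would record the permanence facts that make the d\'{e}vissage run. Since $\mathcal{S}$ has separated quasi-finite diagonal it is affine-pointed and satisfies the Thomason condition and approximation by compacts; being concentrated, its perfect complexes coincide with its compact objects; and since $F$ is proper, $\mathbf{R} F_\ast$ preserves $D^b_{\operatorname{coh}}$. Let $\mathcal{C}$ be the class of algebraic stacks that are concentrated, $F$-finite, have separated quasi-finite diagonal, and have Frobenius representable by algebraic spaces. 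I would check that $\mathcal{C}$ is stable under quasi-compact open immersions and under quasi-compact \'{e}tale morphisms representable by algebraic spaces --- $F$-finiteness and representability of Frobenius by \Cref{cor:F-finiteness_etale_covers}, the rest by standard permanence of concentratedness, separated quasi-finite diagonal, affine-pointedness, the Thomason condition, and approximation by compacts --- and that a finite flat surjective cover of a stack in $\mathcal{C}$ from an affine scheme has $F$-finite source, as in the proof of \Cref{prop:finite_cover}. In particular, for every $\mathcal{X}\in\mathcal{C}$ the auxiliary hypotheses of \Cref{prop:open_immersion,prop:finite_cover,prop:etale_nbhd} hold automatically.

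Next, let $P(\mathcal{X})$ be the property ``$\mathcal{X}$ satisfies \Cref{hyp:frob_gen}'', regarded as a property of stacks in $\mathcal{C}$. The \'{e}tale d\'{e}vissage principle of Hall--Rydh (which applies precisely because the members of $\mathcal{C}$ have separated quasi-finite diagonal) reduces $P$ on all of $\mathcal{C}$ to the following four statements, which are exactly the propositions above: (i) $P$ holds for affine $F$-finite schemes --- \Cref{prop:relative_Frobenius_generation_schemes}; (ii) $P$ is stable under quasi-compact open immersions --- \Cref{prop:open_immersion}; (iii) $P$ descends along finite, flat, surjective, finitely presented morphisms with affine source --- \Cref{prop:finite_cover}; and (iv) $P(\mathcal{X})$ holds whenever $P$ holds for both members of an \'{e}tale neighborhood $(\mathcal{U}\hookrightarrow\mathcal{X},\ \mathcal{Y}\to\mathcal{X})$ with $\mathcal{Y}\to\mathcal{X}$ quasi-compact, \'{e}tale and representable by algebraic spaces --- \Cref{prop:etale_nbhd}. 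The d\'{e}vissage then yields $P(\mathcal{S})$, so $\mathcal{S}$ satisfies \Cref{hyp:frob_gen}.

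For the final clause, fix $Z\subseteq|\mathcal{S}|$ closed and choose $G\in\operatorname{Perf}_Z(\mathcal{S})$ with $\langle G\rangle=\operatorname{Perf}_Z(\mathcal{S})$; such a classical generator for $\operatorname{Perf}_Z$ is known to exist for concentrated stacks with quasi-finite separated diagonal. For $e\gg 0$ as in \Cref{hyp:frob_gen}, the exact functor $\mathbf{R} F_\ast^e$ sends $\operatorname{Perf}_Z(\mathcal{S})\subseteq D^b_{\operatorname{coh},Z}(\mathcal{S})$ into $D^b_{\operatorname{coh},Z}(\mathcal{S})$ (using properness of $F$ and that $F$ is the identity on $|\mathcal{S}|$, so supports are preserved) and hence carries $\langle G\rangle$ into $\langle\mathbf{R} F_\ast^e G\rangle$; therefore
\[
D^b_{\operatorname{coh},Z}(\mathcal{S})=\langle\mathbf{R} F_\ast^e\operatorname{Perf}_Z(\mathcal{S})\rangle=\langle\mathbf{R} F_\ast^e(\langle G\rangle)\rangle\subseteq\langle\mathbf{R} F_\ast^e G\rangle\subseteq D^b_{\operatorname{coh},Z}(\mathcal{S}),
\]
giving $D^b_{\operatorname{coh},Z}(\mathcal{S})=\langle\mathbf{R} F_\ast^e G\rangle$. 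I expect the main obstacle to be structural rather than computational: an \'{e}tale morphism $\mathcal{Y}\to\mathcal{X}$ need not be ``smaller'' than $\mathcal{X}$ in any obvious sense, so a naive Noetherian induction fails and the argument must be packaged as a genuine \'{e}tale d\'{e}vissage; the technical heart is then \Cref{prop:etale_nbhd}, together with verifying that the auxiliary conditions ($F$-finiteness, concentratedness, the Thomason condition, approximation by compacts, representability of the Frobenius by algebraic spaces) are preserved along the d\'{e}vissage steps and that the bound $e\gg 0$ can be taken uniformly through the finitely many steps.
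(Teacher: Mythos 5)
Your proposal is correct and follows essentially the same route as the paper: an \'{e}tale d\'{e}vissage in the sense of Hall--Rydh over $\mathcal{S}$ (the paper uses \cite[Theorem E]{Hall/Rydh:2018} applied to the $2$-category of representable, separated, finitely presented, quasi-finite flat morphisms to $\mathcal{S}$), with the three inductive inputs being exactly \Cref{prop:open_immersion,prop:finite_cover,prop:etale_nbhd} and the affine base case absorbed into \Cref{prop:finite_cover} via \Cref{prop:relative_Frobenius_generation_schemes}. Your derivation of the single-generator statement from the $1$-Thomason condition and exactness of $\mathbf{R}F_\ast^e$ likewise matches the paper's concluding step.
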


\begin{proof}
    Define $\mathbb{E}$ to be the strictly full $2$-subcategory of algebraic stacks over $\mathcal{S}$ consisting of algebraic stacks whose structure morphism $\mathcal{X} \to \mathcal{S}$ are representable by algebraic spaces, 
    separated, finitely presented, quasi-finite, and flat. 
    We make a few observations regarding the objects of $\mathbb{E}$:
    \begin{itemize}
        \item The source of object in $\mathbb{E}$ is concentrated. 
        Indeed, each morphism in $\mathbb{E}$ is representable by algebraic spaces (see e.g.\ \cite[Lemma 6.7]{DeDeyn/Lank/ManaliRahul:2025}), and hence, must be concentrated via \cite[Lemma 2.5(3)]{Hall/Rydh:2017}. 
        As $\mathcal{S}$ is concentrated, the source of objects in $\mathbb{E}$ is concentrated.
        \item The source of object in $\mathbb{E}$ is $F$-finite. 
        This follows from the fact that every morphism of $\mathbb{E}$ is representable by algebraic spaces and of finite presentation. 
        Hence, \Cref{lem:F-finiteness_ascent_descent} is applicable.
        \item The source of object in $(f\colon \mathcal{X}\to \mathcal{S}) \in \mathbb{E}$ has separated and quasi-finite diagonal. 
        Indeed, each morphism is representable by algebraic spaces. 
        The diagonal of $\mathcal{X}$ is given by the composition $\mathcal{X} \xrightarrow{\Delta_f} \mathcal{X} \times_{\mathcal{S}}\mathcal{X} \to \mathcal{X} \times_{\mathbb{Z}}\mathcal{X}$ where the right morphism is obtained from \cite[\href{https://stacks.math.columbia.edu/tag/04Z1x}{Tag 04Z1}]{stacks-project}. 
        Since $f$ is representable by algebraic spaces, the morphism $\Delta_f$ is a monomorphism and the composition is quasi-finite and separated as desired.
        \item The source of the object in $\mathbb{E}$ satisfies approximation by compacts and the $1$-Thomason condition. 
        Indeed, these respectively follow from \cite[Lemma 4.6(i)]{Hall/Rydh:2019}, \cite[Theorem A]{Hall/Lamarche/Lank/Peng:2025}, and \cite[Theorem A]{Hall/Rydh:2017} coupled with the observations on diagonals.
    \end{itemize}
    Set $\mathbb{D}$ to be the subcategory of objects in $\mathbb{E}$ whose sources satisfy \Cref{hyp:frob_gen}. 
    We use \cite[Theorem E]{Hall/Rydh:2018} to show\footnote{We remind the reader (I2) of \cite[Theorem E]{Hall/Rydh:2018} requires \textit{flatness}.} that $\mathbb{D}=\mathbb{E}$. 
    This requires verifying the following:
    \begin{itemize}
        \item If $(\mathcal{U} \to \mathcal{X})\in\mathbb{E}$ is an open immersion and $\mathcal{X}\in\mathbb{D}$, then $\mathcal{U}\in\mathbb{D}$.
        \item If $(V \to \mathcal{X})\in \mathbb{E}$ is finite, flat and surjective with affine source, then $\mathcal{X}\in\mathbb{D}$.
        \item If $(\mathcal{U} \xrightarrow{i} \mathcal{X})$, $(\mathcal{Y} \xrightarrow{f} \mathcal{X})\in\mathbb{E}$ form an \'{e}tale neighborhood, then $\mathcal{X}\in \mathbb{D}$ whenever $\mathcal{U}$, $\mathcal{Y}\in\mathbb{D}$.
    \end{itemize}
    However, these are exactly \Cref{prop:open_immersion,prop:finite_cover,prop:etale_nbhd}. 
    Then, we see that $\mathcal{S}$ satisfies \Cref{hyp:frob_gen}. 
    Moreover, if coupled with the fact that the source of every object in $\mathbb{E}$ satisfies the $1$-Thomason condition, the last claim follows.
\end{proof}

\begin{example}
    \label{ex:non_F-finite_false}
    Let $k=\mathbb{F}_2$.
    Let $X=\operatorname{Spec}(k[x,y]/(xy))$. 
    Consider the stack $\mathcal{X}=X\times_k B\alpha_2$ over $k$. 
    We know from \Cref{lem:F-finite_classifying_stack} that $B\alpha_2$ is not $F$-finite. 
    We claim that \Cref{hyp:frob_gen} fails for $\mathcal{X}$ and $Z=|\mathcal{X}|$ for every $e\geq 0$. 
    We first see that $\mathbf{R}\pi_\ast\mathcal{O}_X$ is unbounded on $\mathcal{X}$ for every $e>0$. 
    This follows from flat base change and the fact that $\mathbf{R}\Gamma(B\alpha_2, \mathcal{O}_{B\alpha_2})$ is unbounded as $\alpha_2$ is not concentrated \cite[Remark 4.6]{Hall/Rydh:2017}. 
    For $e=0$, we notice $\mathcal{X}$ is singular since $X$ is so. Therefore we have $\operatorname{Perf}(\mathcal{X})\neq D_{\operatorname{coh}}^b(\mathcal{X})$ and the claim follows.
\end{example}

\begin{remark}\label{ex:F-finite_non_tame_maybe}
    From \Cref{ex:non_F-finite_false}, we see that the Frobenius pushforward of bounded complexes may not be bounded and the obstructions lie in the high direct images of the Frobenius morphism. 
    These obstructions do not exist on $F$-finite stacks. 
    In particular, consider the stack $\mathcal{Y}=X\times_k B(\mathbb{Z}/p\mathbb{Z})$ in the notation of \Cref{ex:non_F-finite_false}. 
    We claim that $\mathcal{Y}$ is $F$-finite but not concentrated. 
    It is clear that $\mathcal{Y}$ is not concentrated due to the presence of $\mathbb{Z}/p\mathbb{Z}$ stabilizers. 
    Moreover, it is $F$-finite because it is a Deligne--Mumford stack locally of finite type over $\mathbb{F}_p$ by \Cref{ex:DM_F-finite}. 
    In particular, the Frobenius morphism $F_\mathcal{Y}$ is indeed finite as it is a separated representable universal homeomorphism that is locally of finite type by \Cref{lem:homeomorphism_on_underlying_topological_space} and  \Cref{cor:Frobenius_for_DM_is_representable}. Therefore $\mathbf{R}F_{\mathcal{Y},\ast}$ preserves boundedness. 
    It would be interesting to determine if some variants of \Cref{hyp:frob_gen} would hold for stacks like $\mathcal{Y}$.
\end{remark}

\begin{proof}
    [Proof of \Cref{cor:kunz}]
    First, let $Z$ be contained in the regular locus. 
    By \cite[Proposition 3.6]{DeDeyn/Lank/ManaliRahul/Peng:2025}, $D^b_{\operatorname{coh},Z}(\mathcal{X})=\operatorname{Perf}_Z (\mathcal{X})$.  

    Next, suppose $\mathbf{R} F_\ast^e G\in \operatorname{Perf}_Z (\mathcal{X})$ for all $e \gg 0$ and $G$ an arbitrary classical generator for $\operatorname{Perf}_Z (\mathcal{X})$. 
    By \Cref{thm:Frobenius_generation}, $D^b_{\operatorname{coh},Z}(\mathcal{X}) = \langle \mathbf{R} F_\ast^e G \rangle$ for all $e\gg 0$. 
    Hence, $D^b_{\operatorname{coh},Z}(\mathcal{X})=\operatorname{Perf}_Z (\mathcal{X})$. 
    Thus, from \cite[Proposition 3.6]{DeDeyn/Lank/ManaliRahul/Peng:2025}, $Z$ is contained in the regular locus.
\end{proof}

\begin{corollary}
    \label{cor:strong_frobenius_generator}
    Let $\mathcal{S}$ be a Noetherian separated concentrated $F$-finite Deligne--Mumford stack. 
    If $G$ is a classical generator for $\operatorname{Perf}(\mathcal{S})$ and $e\gg 0$, then $\mathbf{R}F_\ast^e G$ is a strong generator for $D^b_{\operatorname{coh}}(\mathcal{S})$. 
\end{corollary}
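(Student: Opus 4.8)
The plan is to deduce this from \Cref{thm:Frobenius_generation}, which already supplies a \emph{classical} generator, together with the elementary observation that in a triangulated category $\mathcal{T}$ admitting at least one strong generator, every classical generator is automatically strong: if $\mathcal{T}=\langle H\rangle_{n+1}$ and $\langle P\rangle=\mathcal{T}$, then $H\in\langle P\rangle_m$ for some $m$, whence $\mathcal{T}=\langle H\rangle_{n+1}\subseteq\langle P\rangle_{m(n+1)}$ using the standard inclusion $\langle\langle-\rangle_a\rangle_b\subseteq\langle-\rangle_{ab}$. So it suffices to produce, for $e\gg 0$: (i) a classical generation statement $\langle\mathbf{R}F_\ast^e G\rangle=D^b_{\operatorname{coh}}(\mathcal{S})$ for the prescribed $G$, and (ii) \emph{some} strong generator of $D^b_{\operatorname{coh}}(\mathcal{S})$.

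First I would check that $\mathcal{S}$ meets the hypotheses of \Cref{thm:Frobenius_generation}. A Deligne--Mumford stack has unramified, hence quasi-finite, diagonal; separatedness makes the diagonal finite, in particular separated and quasi-finite. By \Cref{cor:Frobenius_for_DM_is_representable} the Frobenius $F\colon\mathcal{S}\to\mathcal{S}$ is representable by algebraic spaces, and $\mathcal{S}$ is concentrated and $F$-finite by hypothesis. Applying \Cref{thm:Frobenius_generation} with $Z=|\mathcal{S}|$ (so that $\operatorname{Perf}_Z(\mathcal{S})=\operatorname{Perf}(\mathcal{S})$) yields, for all $e\gg 0$, an object $G_0\in\operatorname{Perf}(\mathcal{S})$ with $\langle\mathbf{R}F_\ast^e G_0\rangle=D^b_{\operatorname{coh}}(\mathcal{S})$.

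Next, to obtain (i) for the given classical generator $G$ of $\operatorname{Perf}(\mathcal{S})$: since $\mathcal{S}$ is Noetherian and $F$ is proper, $\mathbf{R}F_\ast^e$ is a well-defined triangulated endofunctor of $D^b_{\operatorname{coh}}(\mathcal{S})$, and hence carries $\langle E\rangle$ into $\langle\mathbf{R}F_\ast^e E\rangle$ for every $E\in D^b_{\operatorname{coh}}(\mathcal{S})$. As $\langle G\rangle=\operatorname{Perf}(\mathcal{S})\ni G_0$, we get $\mathbf{R}F_\ast^e G_0\in\langle\mathbf{R}F_\ast^e G\rangle$, and therefore $D^b_{\operatorname{coh}}(\mathcal{S})=\langle\mathbf{R}F_\ast^e G_0\rangle\subseteq\langle\mathbf{R}F_\ast^e G\rangle\subseteq D^b_{\operatorname{coh}}(\mathcal{S})$, so $\mathbf{R}F_\ast^e G$ classically generates $D^b_{\operatorname{coh}}(\mathcal{S})$. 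For (ii): being $F$-finite, $\mathcal{S}$ is quasi-excellent (Kunz \cite{Kunz:1976}) and of finite Krull dimension, so the Aoki-type strong generation results for Deligne--Mumford stacks (\cite{Aoki:2021} together with its extensions to stacks; see \Cref{sec:intro}) furnish a strong generator $H$ of $D^b_{\operatorname{coh}}(\mathcal{S})$. Feeding (i) and (ii) into the opening observation completes the proof.

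The only input beyond \Cref{thm:Frobenius_generation} and formal bookkeeping is (ii), the existence of a strong generator for $D^b_{\operatorname{coh}}(\mathcal{S})$; I expect the main point requiring care to be citing the sharpest available such existence theorem and confirming that $F$-finiteness does yield the quasi-excellence and finite-dimensionality it requires — this is precisely where the Deligne--Mumford hypothesis (in contrast to the more general setting of \Cref{thm:Frobenius_generation}) is used.
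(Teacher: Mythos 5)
Your proposal is correct and takes essentially the same route as the paper: both deduce classical generation by $\mathbf{R}F_\ast^e G$ from \Cref{thm:Frobenius_generation} and then upgrade to strong generation by exhibiting \emph{some} strong generator of $D^b_{\operatorname{coh}}(\mathcal{S})$, obtained from $F$-finiteness via Kunz (excellence and finite Krull dimension) and Aoki's theorem. The only difference is one of bookkeeping: the paper applies Kunz and Aoki to an affine \'etale cover $U\to\mathcal{S}$ and then transfers strong generation to $\mathcal{S}$ via \cite[Corollary 6.10]{DeDeyn/Lank/ManaliRahul:2025}, which is exactly the ``extension to stacks'' you leave as the step requiring care.
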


\begin{proof}
    Let $U \to \mathcal{S}$ be an \'{e}tale surjective morphism from an affine scheme. 
    Note that $U \to \mathcal{S}$ must be separated. 
    From \Cref{prop:F-finite_via_finite_presentation}, $U$ must be $F$-finite. By \cite[Proposition 1.1]{Kunz:1976}, $U$ has finite Krull dimension. Moreover, Theorem 2.5 of loc.\ cit.\ ensures $U$ must be excellent. Hence, \cite[Main Theorem]{Aoki:2021} implies $D^b_{\operatorname{coh}}(U)$ admits a strong generator. 
    Then \cite[Corollary 6.10]{DeDeyn/Lank/ManaliRahul:2025} implies $D^b_{\operatorname{coh}}(\mathcal{X})$ must as well. 
    Thus, the claim follows from \Cref{thm:Frobenius_generation} and \cite[\href{https://stacks.math.columbia.edu/tag/0FXA}{Tag 0FXA}]{stacks-project}.
\end{proof}

\begin{proposition}
    \label{prop:Frobenius_generation_bound}
    Let $\mathcal{S}$ be a Noetherian concentrated $F$-finite Deligne--Mumford stack with separated diagonal. 
    Then $e$ in \Cref{hyp:frob_gen} can be taken to be any integer at least
    \begin{displaymath}
        c:=\lfloor \log_p (\gamma (F_\ast \mathcal{O}_U )) \rfloor + 1.
    \end{displaymath}
    for some \'etale presentation $s\colon U\to \mathcal{S}$ from an affine scheme. 
    In particular, $c$ is always finite.
\end{proposition}

\begin{proof}
    The last claim for finiteness of $c$ follows from \Cref{ex:beta_depth_to_codepth}. 
    As $\mathcal{S}$ is Noetherian with quasi-finite and separated diagonal, it always admits an \'etale presentation $s\colon U\to \mathcal{S}$ from an affine scheme \cite[Theorem 7.2]{Rydh:2011}. 
    Moreover, \Cref{thm:Frobenius_generation} ensures it satisfies \Cref{hyp:frob_gen} as our hypothesis impose it has separated diagonal.

    Define $\mathbb{E}$ to be the strictly full $2$-subcategory of algebraic stacks over $\mathcal{S}$ consists of algebraic stacks whose structure morphisms $\mathcal{X}\to \mathcal{S}$ are representable by algebraic spaces, separated, finitely presented, and \'{e}tale. 
    We make a few observations about objects of $\mathbb{E}$:
    \begin{itemize}
        \item The source of object in $\mathbb{E}$ is Deligne--Mumford. 
        To see, note that each morphism is representable by algebraic spaces, and hence, Deligne--Mumford. 
        It follows from the fact $\mathcal{S}$ is Deligne--Mumford.
        \item The source of object in $\mathbb{E}$ is concentrated, affine-pointed, $F$-finite, satisfies the $1$-Thomason condition, has separated diagonals, and the property of approximation by compact. 
        These follow from similar reasoning as in \Cref{thm:Frobenius_generation}.
        \item Every morphism in $\mathbb{E}$ is \'{e}tale (see e.g.\ \cite[\href{https://stacks.math.columbia.edu/tag/0CIR}{Tag 0CIR}]{stacks-project}).
    \end{itemize}
    Set $\mathbb{D}$ to be the subcategory of objects in $\mathbb{E}$ whose sources satisfy the desired claim. 
    In particular, $\mathbb{D}$ consists of those objects $(g\colon\mathcal{X}\to \mathcal{S})\in \mathbb{E}$ whose source satisfy the property for any $Z\subseteq |\mathcal{S}|$ closed and $e\geq c$, there is a $G\in \operatorname{Perf}_{g^{-1}(Z)} (\mathcal{X})$ such that $D^b_{\operatorname{coh},g^{-1}(Z)}(\mathcal{X})= \langle \mathbf{R} F_\ast^eG \rangle$.
    
    We use \cite[Theorem E]{Hall/Rydh:2018} to show that $\mathbb{D}=\mathbb{E}$. 
    This requires verifying the following:
    \begin{enumerate}
        \item \label{prop:Frobenius_generation_bound1} If $(\mathcal{U} \to \mathcal{X})\in\mathbb{E}$ is an open immersion and $\mathcal{X}\in\mathbb{D}$, then $\mathcal{U}\in\mathbb{D}$.
        \item \label{prop:Frobenius_generation_bound2} If $(V \to \mathcal{X})\in \mathbb{E}$ is finite, flat and surjective with affine source, then $\mathcal{X}\in\mathbb{D}$.
        \item \label{prop:Frobenius_generation_bound3} If $(\mathcal{U} \xrightarrow{i} \mathcal{X})$, $(\mathcal{Y} \xrightarrow{f} \mathcal{X})\in\mathbb{E}$, where $i$ is an open immersion, $f$ is \'{e}tale and an isomorphism over $|\mathcal{X}|\setminus |\mathcal{U}|$ (endowed with reduced induced substack structure), i.e.\ an \'{e}tale neighborhood, then $\mathcal{X}\in \mathbb{D}$ whenever $\mathcal{U}$, $\mathcal{Y}\in\mathbb{D}$.
    \end{enumerate}
    We can argue in a similar fashion in \Cref{thm:Frobenius_generation} to show \eqref{prop:Frobenius_generation_bound1} and \eqref{prop:Frobenius_generation_bound3} are satisfied. 
    Indeed, one can observe that the arguments in these steps do not depend on the choice of $e\geq c$. 
    We are left to show \eqref{prop:Frobenius_generation_bound2}. 

    Suppose $(V \to \mathcal{X})\in \mathbb{E}$ is finite, flat, and surjective with an affine source. 
    From the argument of \eqref{prop:Frobenius_generation_bound2} in \Cref{thm:Frobenius_generation}, it suffices to show that the structure morphism satisfies $V\to \mathcal{S}\in \mathbb{D}$. 
    Consider the fibered square
    \begin{displaymath}
        \begin{tikzcd}
            {V\times_\mathcal{S} U} & U \\
            V & {\mathcal{S}.}
            \arrow["{h^\prime}", from=1-1, to=1-2]
            \arrow["{s^\prime}"', from=1-1, to=2-1]
            \arrow["s", from=1-2, to=2-2]
            \arrow["h"', from=2-1, to=2-2]
        \end{tikzcd}
    \end{displaymath}
    Note that $s$ is separated as it is a morphism from a separated scheme (i.e.\ $U$ is affine) to an algebraic stack with separated diagonal. 
    By base change, we know that $s^\prime$ is a separated, \'{e}tale, representable and thus quasi-affine by \cite[Proposition 3.1]{Rydh:2015}. 
    As $h$ is representable by algebraic spaces, the fiber product is an algebraic space. 
    Therefore the fiber product $V\times_\mathcal{S} U$ must be a quasi-affine scheme. 
    By base change, $s^\prime$ is an \'{e}tale surjective morphism. 
    Applying \Cref{lem:bound_along_etale} to $h^\prime$, we see that
    \begin{displaymath}
        c \geq \lfloor \log_p ( \gamma (F_\ast \mathcal{O}_{V\times_\mathcal{S} U}) ) \rfloor +1.
    \end{displaymath}

    Now, let $Z\subseteq |\mathcal{X}|$ be closed. 
    As stated above, it suffices to show that $D^b_{\operatorname{coh},f^{-1}(Z)}(V) = \langle F_\ast^e \operatorname{Perf}_{f^{-1}(Z)}(V) \rangle$. 
    Let $G\in \operatorname{Perf}_{f^{-1}(Z)}(V)$ be a classical generator. 
    Since $V\times_\mathcal{S} U$ is a quasi-affine scheme, $(s^\prime)^\ast G$ is a classical generator for $\operatorname{Perf}_{(f\circ s^\prime)^{-1}(Z)} (V\times_\mathcal{S} U)$. 
    To see, note that \cite[Lemma 4.8(3)]{Hall/Rydh:2017} implies
    \begin{displaymath}
        \operatorname{supp}(\mathbf{L} (s^\prime)^\ast G)= (s^\prime)^{-1}(\operatorname{supp}(G))=(f\circ s^\prime)^{-1}(Z).
    \end{displaymath}
    Then, for each $t\in (f\circ s^\prime)^{-1}(Z)$, \cite[Lemma 1.2]{Neeman:1992} ensures that
    \begin{displaymath}
        \langle (\mathbf{L} (s^\prime)^\ast G)_t \rangle = \operatorname{Perf}_{(f\circ s^\prime)^{-1}(Z) \cap \operatorname{Spec}(\mathcal{O}_{V\times_\mathcal{S} U,t})} (\operatorname{Spec}(\mathcal{O}_{V\times_\mathcal{S} U,t})).
    \end{displaymath}
    However, applying \Cref{lem:generation_is_affine_local} and \cite[Theorem 3.6]{Letz:2021}, it follows that $\mathbf{L} (s^\prime)^\ast G$ is a classical generator for $\operatorname{Perf}_{(f\circ s^\prime)^{-1}(Z)} (V\times_\mathcal{S} U)$ as $\mathcal{O}_{V\times_\mathcal{S} U}$ is a compact generator of $D_{\operatorname{qc}}(V\times_\mathcal{S} U,t)$ (using that $V\times_\mathcal{S} U$ is a quasi-affine scheme).

    Next, \Cref{prop:relative_Frobenius_generation_schemes} implies that for each $E\in D^b_{\operatorname{coh},f^{-1}(Z)}(V)$, 
    \begin{displaymath}
        ((\mathbf{L} s^\prime)^\ast E)_t \in \langle (\mathbf{L} (s^\prime)^\ast F_\ast^e G)_t \rangle = \langle (F_\ast^e \mathbf{L} (s^\prime)^\ast G)_t \rangle = \langle F_\ast^e (\mathbf{L} (s^\prime)^\ast G)_t \rangle
    \end{displaymath}
    where the second equality uses \Cref{rm:etale_base_change_Frobenii} and the third equality uses that Frobenius commutes with localization.
    There is a commutative square
    \begin{displaymath}
        \begin{tikzcd}
            {\operatorname{Spec}(\mathcal{O}_{V\times_\mathcal{S} U,t})} & {\operatorname{Spec}(\mathcal{O}_{V,s^\prime(t)})} \\
            {V\times_\mathcal{S} U} & V
            \arrow[from=1-1, to=1-2]
            \arrow[from=1-1, to=2-1]
            \arrow[from=1-2, to=2-2]
            \arrow["{s^\prime}"', from=2-1, to=2-2]
        \end{tikzcd}
    \end{displaymath} 
    of natural morphisms. 
    From the work above, we see the derived pullback of $E_{f(t)}$ along the faithfully flat morphism of affine schemes $\operatorname{Spec}(\mathcal{O}_{V\times_\mathcal{S} U,t}) \to \operatorname{Spec}(\mathcal{O}_{V,s^\prime(t)})$ belongs to $\langle F_\ast^e (\mathbf{L} (s^\prime)^\ast G)_t \rangle$. 
    According to \cite[Corollary 2.16]{Letz:2021}, it follows that $E_{s^\prime(t)} \in \langle (F_\ast^e G)_t \rangle$. 
    However, $s^\prime$ is surjective, so this holds for all points of the affine scheme $V$. 
    Consequently, \cite[Theorem 3.6]{Letz:2021} implies $E \in \langle F_\ast^e G \rangle$, which completes the proof.
\end{proof}

\begin{remark}
    \label{rmk:stacky_curve}
    Recall that a \textbf{stacky curve} is a smooth, proper, geometrically connected Deligne--Mumford
    stack $\mathcal{X}$ of Krull dimension one over a field $k$ that is generically a scheme \cite[Definition 5.2.1]{Voight/Zureick-Brown:2022}. The latter condition means there is an open dense substack which is a scheme. 
    Now, the coarse moduli space $\pi\colon \mathcal{X} \to X$ of the stacky curve is a smooth projective curve over $k$. 
    Indeed, \cite[Lemma 5.3.4]{Voight/Zureick-Brown:2022} says $X$ is smooth over $k$, whereas \cite[Theorem 6.12]{Rydh:2013} and \cite[\href{https://stacks.math.columbia.edu/tag/0A26}{Tag 0A26}]{stacks-project} ensure $X$ is projective over $k$. 
\end{remark}

\begin{lemma}
    \label{lem:good_moduli_dense}
    Let $\pi \colon \mathcal{X} \to X$ be a good moduli space (see \cite[\S 4]{Alper:2013}) with Noetherian source and target. 
    Then $\mathbf{R}\pi_\ast\colon D^b_{\operatorname{coh}}(\mathcal{X}) \to D^b_{\operatorname{coh}}(X)$ is essentially surjective.
\end{lemma}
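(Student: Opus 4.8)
The plan is to produce an explicit preimage of any $F\in D^b_{\operatorname{coh}}(X)$ built from the derived pullback $\mathbf{L}\pi^\ast F$ together with a single truncation. Two structural consequences of the good moduli space hypothesis drive the argument. First, $\pi$ is cohomologically affine, so $\pi_\ast$ is exact on quasi-coherent sheaves; by the degeneration of the hypercohomology spectral sequence this says that $\mathbf{R}\pi_\ast\colon D_{\operatorname{qc}}(\mathcal{X})\to D_{\operatorname{qc}}(X)$ is t-exact for the standard t-structures, i.e.\ $\mathcal{H}^i(\mathbf{R}\pi_\ast G)\cong \pi_\ast\mathcal{H}^i(G)$ for all $i$. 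Second, $\pi_\ast\mathcal{O}_{\mathcal{X}}\cong\mathcal{O}_X$, hence $\mathbf{R}\pi_\ast\mathcal{O}_{\mathcal{X}}\cong\mathcal{O}_X$; combined with the projection formula for the concentrated morphism $\pi$ (a cohomologically affine morphism is quasi-compact, quasi-separated, and concentrated; cf.\ \cite{Hall/Rydh:2017}) this yields the key identity
\begin{displaymath}
\mathbf{R}\pi_\ast\mathbf{L}\pi^\ast F\;\cong\; F\otimes^{\mathbb{L}}_{\mathcal{O}_X}\mathbf{R}\pi_\ast\mathcal{O}_{\mathcal{X}}\;\cong\; F
\qquad\text{for every }F\in D_{\operatorname{qc}}(X).
\end{displaymath}

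Now fix $F\in D^b_{\operatorname{coh}}(X)$ and let $a\leq b$ be the least and greatest degrees in which $\mathcal{H}^\bullet(F)$ is nonzero. Set $G:=\mathbf{L}\pi^\ast F$. Because $\pi^\ast$ sends coherent sheaves to finite-type quasi-coherent sheaves and $\mathcal{X}$ is Noetherian, $G$ has coherent cohomology, and since $\pi^\ast$ is right exact, $G\in D^{\leq b}_{\operatorname{coh}}(\mathcal{X})$; a priori $G$ may be unbounded below. I would then take $E:=\tau_{\geq a}G$, which lies in $D^b_{\operatorname{coh}}(\mathcal{X})$ as its cohomology is coherent and concentrated in degrees $[a,b]$. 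Applying $\mathbf{R}\pi_\ast$ to the truncation triangle $\tau_{<a}G\to G\to E\xrightarrow{+1}$ and using t-exactness, the cohomology of $\mathbf{R}\pi_\ast(\tau_{<a}G)$ vanishes in degrees $\geq a$ by construction, while in degrees $i<a$ it equals $\pi_\ast\mathcal{H}^i(G)=\mathcal{H}^i(\mathbf{R}\pi_\ast G)=\mathcal{H}^i(F)=0$. Hence $\mathbf{R}\pi_\ast(\tau_{<a}G)\cong 0$, so the triangle gives $\mathbf{R}\pi_\ast E\cong\mathbf{R}\pi_\ast G\cong F$, which is the required preimage.

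The main point requiring care is the isomorphism $\mathbf{R}\pi_\ast\mathbf{L}\pi^\ast F\cong F$ in the stacky setting: one should invoke the projection formula for concentrated morphisms of algebraic stacks together with the defining property $\mathbf{R}\pi_\ast\mathcal{O}_{\mathcal{X}}\cong\mathcal{O}_X$ of a good moduli space. A secondary remark worth including is that $\mathbf{R}\pi_\ast$ does land in $D^b_{\operatorname{coh}}(X)$ — boundedness from cohomological affineness, coherence from preservation of coherence by $\pi_\ast$ under the Noetherian hypotheses — so that the functor in the statement is well defined. Finally, if one prefers to avoid the projection formula, one can instead verify $\mathbf{R}\pi_\ast\mathbf{L}\pi^\ast\mathcal{G}\cong\mathcal{G}$ for a single coherent sheaf $\mathcal{G}$ directly from t-exactness, once $\pi_\ast L_j\pi^\ast\mathcal{G}=0$ for $j>0$ and $\pi_\ast\pi^\ast\mathcal{G}\cong\mathcal{G}$ are known, and then attempt to bootstrap to bounded complexes; but that approach must grapple with gluing the extension classes, which the truncation argument above sidesteps entirely.
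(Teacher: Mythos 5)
Your argument is correct, but it takes a genuinely different route from the paper's. The paper's proof is closer to the ``fallback'' you dismiss at the end: it cites \cite[Proposition 4.5]{Alper:2013} for the fact that the unit $\mathcal{F}\to\pi_\ast\pi^\ast\mathcal{F}$ is an isomorphism on quasi-coherent \emph{sheaves}, represents $F$ by a bounded complex of coherent sheaves, applies the underived $\pi^\ast$ componentwise, and uses exactness of $\pi_\ast$ to identify $\mathbf{R}\pi_\ast$ with the componentwise pushforward; the unit is then an isomorphism of honest complexes, so there is no gluing of extension classes to worry about --- that concern only arises if one tries to assemble the preimage cohomology sheaf by cohomology sheaf, not if one works with a chosen complex of sheaves. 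Your route instead takes $\mathbf{L}\pi^\ast F$, invokes the projection formula for the concentrated morphism $\pi$ together with $\mathbf{R}\pi_\ast\mathcal{O}_{\mathcal{X}}\cong\mathcal{O}_X$ to get $\mathbf{R}\pi_\ast\mathbf{L}\pi^\ast F\cong F$, and then truncates to land in $D^b_{\operatorname{coh}}(\mathcal{X})$. What your approach buys is independence from \cite[Proposition 4.5]{Alper:2013} (the projection formula plus $\pi_\ast\mathcal{O}_{\mathcal{X}}\cong\mathcal{O}_X$ reproves that unit isomorphism); what it costs is the extra truncation step needed because $\mathbf{L}\pi^\ast F$ is a priori unbounded below ($\pi$ need not be flat), and a slightly heavier reliance on t-exactness of $\mathbf{R}\pi_\ast$ on unbounded-below complexes when showing $\mathbf{R}\pi_\ast(\tau_{<a}G)\cong 0$ --- this is fine since $\pi_\ast$ has cohomological dimension zero, but it deserves a word, as does the implicit identification of the lisse-\'etale $\mathbf{R}\pi_\ast$ on $D_{\operatorname{qc}}$ with the derived functor of $\pi_\ast$ on quasi-coherent sheaves (a comparison both proofs use silently).
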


\begin{proof}
    The following idea appeared in the proof of \cite[Lemma 2.17]{Ballard/Favero:2012}, but we spell it out in slightly more generality. 
    By \cite[Proposition 4.5]{Alper:2013}, the unit of the underived pullback/pushforward adjunction for $\pi$ is an isomorphism on quasi-coherent sheaves. 
    Moreover, the pushforward functor of $\pi$ is exact on quasi-coherent sheaves. 
    Hence, we may identify $\mathbf{R}\pi_\ast$ with the assignment of applying $\pi_\ast$ to each component of a complex. 
    Therefore we may apply the unit of the underived pullback/pushforward adjunction for $\pi$ termwise to obtain a quasi-isomorphism for every complex with quasi-coherent cohomology sheaves. 
    Thus, the desired claim follows.
\end{proof}

\begin{proposition}
    \label{prop:F-thickness_stacky_curve}
    Consider a tame stacky curve $\mathcal{X}$ over an algebraically closed field of prime characteristic. 
    Denote by $\pi\colon \mathcal{X} \to X$ the associated coarse moduli space. Suppose there is an $e>0$ such that $\mathbf{R} F_\ast^e \mathcal{O}_{\mathcal{X}}$ is a strong generator for $D^b_{\operatorname{coh}}(\mathcal{X})$. 
    Then the curve $X$ has genus zero. 
\end{proposition}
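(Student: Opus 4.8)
The plan is to transport the hypothesis down to the coarse moduli space and invoke the curve case of \cite{Ballard/Iyengar/Lank/Mukhopadhyay/Pollitz:2023}. By \Cref{rmk:stacky_curve}, the coarse space $\pi\colon\mathcal{X}\to X$ has $X$ a smooth projective curve over the given algebraically closed field $k$. First I would record that, since $\mathcal{X}$ is \emph{tame}, the map $\pi$ is a good moduli space in the sense of \cite{Alper:2013}: tameness makes $\pi_\ast$ exact on quasi-coherent sheaves and $\pi_\ast\mathcal{O}_{\mathcal{X}}=\mathcal{O}_X$. Hence \Cref{lem:good_moduli_dense} applies, so $\mathbf{R}\pi_\ast\colon D^b_{\operatorname{coh}}(\mathcal{X})\to D^b_{\operatorname{coh}}(X)$ is essentially surjective; moreover, exactly as in the proof of that lemma, $\mathbf{R}\pi_\ast$ is computed by applying the exact functor $\pi_\ast$ termwise, so that $\mathbf{R}\pi_\ast\mathcal{O}_{\mathcal{X}}\cong\mathcal{O}_X$.

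Next I would use the elementary observation that an exact, essentially surjective functor $\Phi\colon\mathcal{T}\to\mathcal{T}'$ of triangulated categories sends strong generators to strong generators: being exact and additive, $\Phi$ preserves $\operatorname{add}$ and cones, so $\Phi(\langle G\rangle_n)\subseteq\langle\Phi G\rangle_n$ for every $n$; hence $\langle G\rangle_{n+1}=\mathcal{T}$ forces $\langle\Phi G\rangle_{n+1}$ to contain every object of $\mathcal{T}'$ up to isomorphism, i.e.\ to be all of $\mathcal{T}'$. Applying this with $\Phi=\mathbf{R}\pi_\ast$ and $G=\mathbf{R}F_\ast^e\mathcal{O}_{\mathcal{X}}$ shows $\mathbf{R}\pi_\ast\mathbf{R}F_\ast^e\mathcal{O}_{\mathcal{X}}$ is a strong generator of $D^b_{\operatorname{coh}}(X)$. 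By functoriality of the absolute Frobenius morphism, $\pi\circ F_{\mathcal{X}}^e=F_X^e\circ\pi$, and therefore
\[
\mathbf{R}\pi_\ast\mathbf{R}F_\ast^e\mathcal{O}_{\mathcal{X}}\;\cong\;\mathbf{R}F_{X,\ast}^e\,\mathbf{R}\pi_\ast\mathcal{O}_{\mathcal{X}}\;\cong\;\mathbf{R}F_{X,\ast}^e\mathcal{O}_X\;=\;F_{X,\ast}^e\mathcal{O}_X ,
\]
the last equality because $X$ is regular, so $F_X$ is finite flat by Kunz \cite{Kunz:1976} and $F_{X,\ast}$ is exact. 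Thus $F_{X,\ast}^e\mathcal{O}_X$ is a strong generator of $D^b_{\operatorname{coh}}(X)$ for a smooth projective curve over an algebraically closed field of characteristic $p$, and the curve case of \cite{Ballard/Iyengar/Lank/Mukhopadhyay/Pollitz:2023} then forces $X$ to have genus zero.

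The parts that are just bookkeeping are the stability of generation under $\mathbf{R}\pi_\ast$ and the identity $\mathbf{R}\pi_\ast\mathbf{R}F_\ast^e\cong\mathbf{R}F_{X,\ast}^e\mathbf{R}\pi_\ast$. Two points deserve a little care. First, one must be sure that the coarse space map of a tame stacky curve genuinely is a good moduli space with exact pushforward and $R^{>0}\pi_\ast\mathcal{O}_{\mathcal{X}}=0$, so that \Cref{lem:good_moduli_dense} applies and $\mathbf{R}\pi_\ast\mathcal{O}_{\mathcal{X}}=\mathcal{O}_X$. Second --- and this is the real obstacle --- one must match the cited curve statement to the present hypothesis: we are only given a \emph{single} $e>0$ for which $\mathbf{R}F_\ast^e\mathcal{O}_{\mathcal{X}}$ strongly generates, so what is needed from \cite{Ballard/Iyengar/Lank/Mukhopadhyay/Pollitz:2023} is the characterization of genus-zero smooth projective curves over an algebraically closed field as precisely those $X$ for which $F_{X,\ast}^e\mathcal{O}_X$ is a strong generator of $D^b_{\operatorname{coh}}(X)$ for some (equivalently, all) $e\geq 1$.
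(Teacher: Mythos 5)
Your proposal is correct and follows essentially the same route as the paper's proof: pass to the coarse moduli space (a smooth projective curve), use tameness to get a good moduli space so that $\mathbf{R}\pi_\ast$ is exact and essentially surjective and hence carries strong generators to strong generators, identify $\mathbf{R}\pi_\ast\mathbf{R}F_\ast^e\mathcal{O}_{\mathcal{X}}\cong F_\ast^e\mathcal{O}_X$ via the unit isomorphism and compatibility with Frobenius, and conclude by the curve case of \cite{Ballard/Iyengar/Lank/Mukhopadhyay/Pollitz:2023}. Your explicit remarks on the ``single $e$'' issue and on why essentially surjective exact functors preserve strong generation are exactly the points the paper leaves implicit.
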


\begin{proof}
    By \cite[Theorem 4.10]{Ballard/Iyengar/Lank/Mukhopadhyay/Pollitz:2023}, it suffices to show that $F_\ast^e \mathcal{O}_X$ is a strong generator for $D^b_{\operatorname{coh}}(X)$. 
    As $\mathcal{X}$ is tame, \cite[Example 8.1]{Alper:2013} tells us $\pi$ is a good moduli space.
    There is a string of isomorphisms,
    \begin{displaymath}
        \mathbf{R}\pi_\ast \mathbf{R} F_\ast^e \mathcal{O}_{\mathcal{X}} \cong F_\ast^e \mathbf{R} \pi_\ast \mathcal{O}_{\mathcal{X}} \cong F_\ast^e \mathcal{O}_X
    \end{displaymath}
    where we have used the isomorphism $\mathcal{O}_X \to \mathbf{R} \pi_\ast \mathcal{O}_{\mathcal{X}}$ coming from the unit of derived pullback/pushforward adjunction. 
    If we can show $\pi_\ast \mathbf{R} F_\ast^e \mathcal{O}_{\mathcal{X}}$ is a strong generator for $D^b_{\operatorname{coh}}(X)$, then we are done. However, \Cref{lem:good_moduli_dense} tells us $\mathbf{R} \pi_\ast$ is essentially dense on $D^b_{\operatorname{coh}}$ and the result follows.
\end{proof}

\appendix

\section{Verdier localizations with support}

We record a Verdier localization sequence for the bounded derived category of coherent sheaves with support on Noetherian algebraic stacks. 
In the case without support, this is \cite[Proposition B.1]{Hall/Lamarche/Lank/Peng:2025}. 
Our argument for the support case follows closely from that of \cite[Proposition B.1]{Hall/Lamarche/Lank/Peng:2025}. 

\begin{notation}
    Let $\mathcal{X}$ be a Noetherian algebraic stack. 
    We write $\overline{\operatorname{coh}}(\mathcal{X})\subseteq\operatorname{Qcoh}(\mathcal{X})$ for the full subcategory consisting of countably generated quasi-coherent $\mathcal{O_X}$-modules. 
    Set $D_{\overline{\operatorname{coh}}}^b(\mathcal{X})\subseteq D_{\operatorname{qc}}(\mathcal{X})$ to be the full subcategory consisting of bounded complexes with countably generated cohomology sheaves. 
\end{notation}
 
\begin{remark}
    By \cite[Lemma A.1]{Hall:2022}, $\overline{\operatorname{coh}}(\mathcal{X})\subseteq\operatorname{Qcoh}(\mathcal{X})$ is a Serre subcategory. 
    It follows that $D_{\overline{\operatorname{coh}}}^b(\mathcal{X})$ is a triangulated subcategory of $D_{\operatorname{qc}}(\mathcal{X})$.
\end{remark}

\begin{lemma}
    \label{lem:relative_denseness_open_immersion}
    Let $\mathcal{X}$ be a Noetherian algebraic stack. 
    Consider an open immersion $j\colon \mathcal{U} \to \mathcal{X}$. 
    Suppose $Z\subseteq |\mathcal{U}|$ is closed. 
    Denote by $\overline{Z}$ for the closure of $Z$ in $|\mathcal{X}|$. 
    Then $\mathbf{L} j^\ast$ restricts to a Verdier localization $D^b_{\operatorname{coh},\overline{Z}} (\mathcal{X}) \to D^b_{\operatorname{coh},Z}(\mathcal{U})$ whose kernel consists of $E\in D^b_{\operatorname{coh},\overline{Z}} (\mathcal{X})$ with support is contained in $Z\cap (|\mathcal{X}|\setminus |\mathcal{U}|)$.
\end{lemma}

\begin{proof}
    By \cite[Proposition B.1]{Hall/Lamarche/Lank/Peng:2025}, we have the following localization sequence
    \begin{displaymath}
        D^b_{\operatorname{coh},|\mathcal{X}|\setminus|\mathcal{U}|}(\mathcal{X})
        \longrightarrow D^b_{\operatorname{coh}} (\mathcal{X}) \to D^b_{\operatorname{coh}}(\mathcal{U}).
    \end{displaymath}
    Let $M\in D^b_{\operatorname{coh},Z} (\mathcal{U})$. 
    We first claim that there exists $\overline{M}\in D^b_{\operatorname{coh},\overline{Z}} (\mathcal{X})$ such that $\mathbf{L} j^\ast\overline{M}\simeq M$. 
    Indeed, we may write $\mathbf{R} j_\ast M\simeq \operatorname{hocolim}_s M_s$ for some $M_s\in D^b_{\operatorname{coh}} (\mathcal{X})$ and the induced morphisms $\mathcal{H}^{i}(M_s)\to\mathcal{H}^i(\mathbf{R} j_\ast M)$ are monomorphisms for every $i\in\mathbb{Z}$ \cite[Lemma A.3]{Hall:2022}. Since $M$ is supported on $Z$, we see that $\mathbf{R} j_\ast M$ is supported on $\overline{Z}$. 
    It follows that $M_s$ is supported on $\overline{Z}$ for all $s$. 
    Note that $j$ is an open immersion and thus flat. 
    The composition $j^\ast M_s\to j^\ast\mathbf{R} j_\ast M\simeq M$ induces a monomorphism $\mathcal{H}^{i}(M_s)\to\mathcal{H}^i(\mathbf{R} j_\ast M)$ for every $i$. 
    Since $M\in D^b_{\operatorname{coh}}(\mathcal{U})$, we have $j^\ast M_s\simeq M$ for $s\gg 0$. 
    As $M_s$ is supported on $\overline{Z}$, the claim follows and $D^b_{\operatorname{coh},\overline{Z}} (\mathcal{X}) \to D^b_{\operatorname{coh},Z}(\mathcal{U})$ is essentially surjective.

    It remains to show that the kernel of $D^b_{\operatorname{coh},\overline{Z}} (\mathcal{X}) \to D^b_{\operatorname{coh},Z}(\mathcal{U})$ is $D^b_{\operatorname{coh},Z\cap (|\mathcal{X}|\setminus |\mathcal{U}|)} (\mathcal{X})$. 
    To this end, let $\phi\colon M\to N$ be a morphism in $D^b_{\overline{\operatorname{coh}}} (\mathcal{X})\cap D_{\operatorname{qc},\overline{Z}}(\mathcal{X})$ such that $N\in D^b_{\operatorname{coh},\overline{Z}} (\mathcal{X})$ and $j^\ast\phi$ is an isomorphism. 
    We claim that there exists $\psi\colon M^\prime\to M$ such that $j^\ast\psi$ is also an isomorphism and $M^\prime\in D^b_{\operatorname{coh},\overline{Z}} (\mathcal{X})$. 
    To see this claim, we form the following triangle
    \begin{displaymath}
        K\longrightarrow M\overset{\phi}{\longrightarrow} N\longrightarrow K[1].
    \end{displaymath}
    By assumption, we have $j^\ast K\simeq 0$ and thus $\operatorname{supp}(K)\subseteq\overline{Z}\cap(|\mathcal{X}|\setminus |\mathcal{U}|)=Z\cap (|\mathcal{X}|\setminus |\mathcal{U}|)$. 
    Applying \cite[Lemma A.3]{Hall:2022}, we may write $K\simeq\operatorname{hocolim}_s K_s$ such that $K_s\in D^b_{\operatorname{coh}} (\mathcal{X})$ and the induced morphism $\mathcal{H}^{i}(K_s)\to\mathcal{H}^i(K)$ is a monomorphism for every $i$. 
    It follows that $K_s\in D^b_{\operatorname{coh},Z\cap (|\mathcal{X}|\setminus |\mathcal{U}|)} (\mathcal{X})$ for every $s$. Since $N\in D^b_{\operatorname{coh},\overline{Z}} (\mathcal{X})$, the morphism $N\to K[1]$ factors through $K_s[1]\to K[1]$ for some $s\gg 0$ by \cite[Lemma 1.2]{Hall/Rydh:2017}. 
    Form the following triangle
    \begin{displaymath}
        K_s\longrightarrow M^\prime\longrightarrow N\longrightarrow K[1].
    \end{displaymath}
    By construction, we have $M^\prime\in D^b_{\operatorname{coh},\overline{Z}} (\mathcal{X})$.
    Moreover, we obtain a morphism $\psi\colon M^\prime\to M$ by TR3 and the claim follows. 
    This finishes the proof.
\end{proof}

\bibliographystyle{alpha}
\bibliography{mainbib}

@misc{stacks-project,
    shorthand    = {Stacks},
    author       = {The {Stacks Project Authors}},
    title        = {\textit{Stacks Project}},
    howpublished = {\url{https://stacks.math.columbia.edu}},
    year         = {2026},
}

@article{Elagin/Lunts/Schnurer:2020,
    author = {Elagin, Alexey and Lunts, Valery A. and Schn{\"u}rer, Olaf M.},
    title = {Smoothness of derived categories of algebras},
    fjournal = {Moscow Mathematical Journal},
    journal = {Mosc. Math. J.},
    issn = {1609-3321},
    volume = {20},
    number = {2},
    pages = {277--309},
    year = {2020},
    language = {English},
    keywords = {16E45,16E35,14F08,16H05},
    url = {www.mathjournals.org/mmj/2020-020-002/2020-020-002-003.html},
    zbMATH = {7206644},
    Zbl = {1468.16021}
}

@article{Rydh:2013,
    author = {Rydh, David},
    title = {Existence and properties of geometric quotients},
    fjournal = {Journal of Algebraic Geometry},
    journal = {J. Algebr. Geom.},
    issn = {1056-3911},
    volume = {22},
    number = {4},
    pages = {629--669},
    year = {2013},
    language = {English},
    doi = {10.1090/S1056-3911-2013-00615-3},
    keywords = {14A20,14L24},
    zbMATH = {6221881},
    Zbl = {1278.14003}
}

@article {Aoki:2021,
    AUTHOR = {Aoki, Ko},
    TITLE = {Quasiexcellence implies strong generation},
    JOURNAL = {J. Reine Angew. Math.},
    FJOURNAL = {Journal für die Reine und Angewandte Mathematik. [Crelle's
            Journal]},
    VOLUME = {780},
    YEAR = {2021},
    PAGES = {133--138},
    ISSN = {0075-4102},
    MRCLASS = {14F08 (18G80)},
    DOI = {10.1515/crelle-2021-0034},
    URL = {https://doi.org/10.1515/crelle-2021-0034},
}

@article{Keller/Murfet/VanDenBergh:2009,
    author = {Keller, Bernhard and Murfet, Daniel and Van den Bergh, Michel},
    title = {On two examples by {Iyama} and {Yoshino}},
    fjournal = {Compositio Mathematica},
    journal = {Compos. Math.},
    issn = {0010-437X},
    volume = {147},
    number = {2},
    pages = {591--612},
    year = {2011},
    language = {English},
    doi = {10.1112/S0010437X10004902},
    keywords = {13C14,14F05,18E30,16G20},
    zbMATH = {5882544},
    Zbl = {1264.13016}
}

@article{Iyengar/Takahashi:2016,
    author = {Iyengar, Srikanth B. and Takahashi, Ryo},
    title = {Annihilation of cohomology and strong generation of module categories},
    fjournal = {IMRN. International Mathematics Research Notices},
    journal = {Int. Math. Res. Not.},
    issn = {1073-7928},
    volume = {2016},
    number = {2},
    pages = {499--535},
    year = {2016},
    language = {English},
    doi = {10.1093/imrn/rnv136},
    keywords = {13D03,13D09,13C60,16E30},
    zbMATH = {6561542},
    Zbl = {1355.13015}
}

@article {Lunts:2010,
    AUTHOR = {Lunts, Valery A.},
    TITLE = {Categorical resolution of singularities},
    JOURNAL = {J. Algebra},
    FJOURNAL = {Journal of Algebra},
    VOLUME = {323},
    YEAR = {2010},
    NUMBER = {10},
    PAGES = {2977--3003},
    ISSN = {0021-8693},
    MRCLASS = {18E30 (14E15 14F05)},
    MRNUMBER = {2609187},
    MRREVIEWER = {Jon Eivind Vatne},
    DOI = {10.1016/j.jalgebra.2009.12.023},
    URL = {https://doi.org/10.1016/j.jalgebra.2009.12.023},
}

@Article{Neeman:2021a,
    Author = {Neeman, Amnon},
    Title = {Strong generators in {{\(\mathbf{D}^{\mathrm{perf}}(X)\)}} and {{\(\mathbf{D}^b_{\mathrm{coh}}(X)\)}}},
    FJournal = {Annals of Mathematics. Second Series},
    Journal = {Ann. Math. (2)},
    ISSN = {0003-486X},
    Volume = {193},
    Number = {3},
    Pages = {689--732},
    Year = {2021},
    Language = {English},
    DOI = {10.4007/annals.2021.193.3.1},
    Keywords = {18G80,18G20},
    zbMATH = {7353240},
    Zbl = {1478.18014}
}

@article{Schwede/Shipley:2003,
    author = {Schwede, Stefan and Shipley, Brooke},
    title = {Stable model categories are categories of modules},
    fjournal = {Topology},
    journal = {Topology},
    issn = {0040-9383},
    volume = {42},
    number = {1},
    pages = {103--153},
    year = {2003},
    language = {English},
    doi = {10.1016/S0040-9383(02)00006-X},
    keywords = {55U35,55P42,18G55,18E30},
    zbMATH = {1895949},
    Zbl = {1013.55005}
}

@article{Hanlon/Hicks/Lazarev:2024,
    author = {Hanlon, Andrew and Hicks, Jeff and Lazarev, Oleg},
    title = {Resolutions of toric subvarieties by line bundles and applications},
    fjournal = {Forum of Mathematics, Pi},
    journal = {Forum Math. Pi},
    issn = {2050-5086},
    volume = {12},
    pages = {58},
    note = {Id/No e24},
    year = {2024},
    language = {English},
    doi = {10.1017/fmp.2024.21},
    keywords = {14M25,18G10,14F08},
    zbMATH = {7959038}
}

@article{Favero/Huang:2023,
    author = {Favero, David and Huang, Jesse},
    title = {Rouquier dimension is {Krull} dimension for normal toric varieties},
    fjournal = {European Journal of Mathematics},
    journal = {Eur. J. Math.},
    issn = {2199-675X},
    volume = {9},
    number = {4},
    pages = {13},
    note = {Id/No 91},
    year = {2023},
    language = {English},
    doi = {10.1007/s40879-023-00686-1},
    keywords = {14M25,32S60,14F08,14J33},
    zbMATH = {7754540},
    Zbl = {1541.14070}
}

@article{Castorena/Neumann:2024,
    author = {Castorena, Abel and Neumann, Frank},
    title = {On actions of {Frobenius} morphisms for moduli stacks of principal bundles over algebraic curves},
    fjournal = {Bulletin des Sciences Math{\'e}matiques},
    journal = {Bull. Sci. Math.},
    issn = {0007-4497},
    volume = {193},
    pages = {26},
    note = {Id/No 103450},
    year = {2024},
    language = {English},
    doi = {10.1016/j.bulsci.2024.103450},
    keywords = {14D23,14F20,14G15,14H10},
    zbMATH = {7863198},
    Zbl = {1547.14016}
}

@article{Hall/Rydh:2015,
    author = {Hall, Jack and Rydh, David},
    title = {Algebraic groups and compact generation of their derived categories of representations},
    fjournal = {Indiana University Mathematics Journal},
    journal = {Indiana Univ. Math. J.},
    issn = {0022-2518},
    volume = {64},
    number = {6},
    pages = {1903--1923},
    year = {2015},
    language = {English},
    doi = {10.1512/iumj.2015.64.5719},
    keywords = {14F05,14L15,13D09,14A20,18G10},
    url = {www.iumj.indiana.edu/IUMJ/ABS/2015/5719},
    zbMATH = {6543228},
    Zbl = {1348.14045}
}

@Article{Ballard/Favero:2012,
    Author = {Ballard, Matthew and Favero, David},
    Title = {Hochschild dimensions of tilting objects},
    FJournal = {IMRN. International Mathematics Research Notices},
    Journal = {Int. Math. Res. Not.},
    ISSN = {1073-7928},
    Volume = {2012},
    Number = {11},
    Pages = {2607--2645},
    Year = {2012},
    Language = {English},
    DOI = {10.1093/imrn/rnr124},
    Keywords = {18E30,14F05},
    zbMATH = {6074833},
    Zbl = {1250.18011}
}

@book{Voight/Zureick-Brown:2022,
    author = {Voight, John and Zureick-Brown, David},
    title = {The canonical ring of a stacky curve},
    fseries = {Memoirs of the American Mathematical Society},
    series = {Mem. Am. Math. Soc.},
    issn = {0065-9266},
    volume = {1362},
    isbn = {978-1-4704-5228-5; 978-1-4704-7094-4},
    year = {2022},
    publisher = {Providence, RI: American Mathematical Society (AMS)},
    language = {English},
    doi = {10.1090/memo/1362},
    keywords = {14Q05,11F11},
    zbMATH = {7573742},
    Zbl = {1497.14119}
}

@article{Neeman:1992,
    author = {Neeman, Amnon},
    title = {The chromatic tower for {{\(D(R)\)}}. {With} an appendix by {Marcel} {B{\"o}kstedt}},
    fjournal = {Topology},
    journal = {Topology},
    issn = {0040-9383},
    volume = {31},
    number = {3},
    pages = {519--532},
    year = {1992},
    language = {English},
    doi = {10.1016/0040-9383(92)90047-L},
    keywords = {18E30,13E05,55Q10},
    zbMATH = {166187},
    Zbl = {0793.18008}
}

@article {Lipman/Neeman:2007,
    AUTHOR = {Lipman, Joseph and Neeman, Amnon},
    TITLE = {Quasi-perfect scheme-maps and boundedness of the twisted
            inverse image functor},
    JOURNAL = {Illinois J. Math.},
    FJOURNAL = {Illinois Journal of Mathematics},
    VOLUME = {51},
    YEAR = {2007},
    NUMBER = {1},
    PAGES = {209--236},
    ISSN = {0019-2082,1945-6581},
    MRCLASS = {14A15},
    MRNUMBER = {2346195},
    MRREVIEWER = {Stefan\ Schr\"{o}er},
    URL = {http://projecteuclid.org/euclid.ijm/1258735333},
}

@article {Kunz:1976,
    AUTHOR = {Kunz, Ernst},
    TITLE = {On Noetherian rings of characteristic $p$},
    JOURNAL = {Amer. J. Math.},
    FJOURNAL = {American Journal of Mathematics},
    VOLUME = {98},
    YEAR = {1976},
    PAGES = {999--1013},
    ISSN = {0002-9327},
    MRCLASS = {13E05 (14E15)},
    MRNUMBER = {432625},
    MRREVIEWER = {Seydi, Hamet},
    DOI = {10.2307/2374038},
    URL = {https://doi.org/10.2307/2374038},
}

@article {Rouquier:2008,
    AUTHOR = {Rouquier, Rapha\"el},
    TITLE = {Dimensions of triangulated categories},
    JOURNAL = {J. K-Theory},
    FJOURNAL = {Journal of K-Theory. K-Theory and its Applications in
            Algebra, Geometry, Analysis &amp; Topology},
    VOLUME = {1},
    YEAR = {2008},
    PAGES = {193--256},
    ISSN = {1865-2433},
    MRCLASS = {18E30 (14F05)},
    DOI = {10.1017/is007011012jkt010},
    URL = {https://doi.org/10.1017/is007011012jkt010},
}

@misc{Hall/Lamarche/Lank/Peng:2025,
    title={Compact approximation and descent for algebraic stacks}, 
    author={Jack Hall and Alicia Lamarche and Pat Lank and Fei Peng},
    year={2025},
    url={https://arxiv.org/abs/2504.21125}, 
    eprint={2504.21125},
    archivePrefix={arXiv},
    howpublished	= {\href{https://arxiv.org/abs/arXiv:2504.21125}{arXiv:2504.21125}},
    publisher     = {arXiv},
    note = {to appear in Forum Math., Sigma}
}

@article{Lank/Venkatesh:2025, 
    title={Triangulated characterizations of singularities}, 
    DOI={10.1017/nmj.2025.11}, 
    journal={Nagoya Mathematical Journal}, 
    author={Pat Lank and Sridhar Venkatesh}, 
    year={2025}, 
    pages={1–15}
}

@article{Lank/McDonald/Venkatesh:2025,
    title = {Measuring rationality of Schwede–Takagi pairs},
    journal = {Journal of Pure and Applied Algebra},
    volume = {230},
    number = {6},
    pages = {108268},
    year = {2026},
    issn = {0022-4049},
    doi = {https://doi.org/10.1016/j.jpaa.2026.108268},
    url = {https://www.sciencedirect.com/science/article/pii/S002240492600099X},
    author = {Pat Lank and Peter M. McDonald and Sridhar Venkatesh},
}

@article {Neeman:2022,
    AUTHOR = {Neeman, Amnon},
    TITLE = {Bounded {$t$}-structures on the category of perfect complexes},
    JOURNAL = {Acta Math.},
    FJOURNAL = {Acta Mathematica},
    VOLUME = {233},
    YEAR = {2024},
    NUMBER = {2},
    PAGES = {239--284},
    ISSN = {0001-5962,1871-2509},
    MRCLASS = {18G80 (14F08 18N55 19D35)},
    MRNUMBER = {4827655},
    DOI = {10.4310/acta.2024.v233.n2.a2},
    URL = {https://doi.org/10.4310/acta.2024.v233.n2.a2},
}

@article{Hall/Priver:2024,
    author = {Hall, Jack and Priver, Kyle},
    title = {A generalized {Bondal}-{Orlov} full faithfulness criterion for {Deligne}-{Mumford} stacks},
    fjournal = {Selecta Mathematica. New Series},
    journal = {Sel. Math., New Ser.},
    issn = {1022-1824},
    volume = {32},
    number = {3},
    pages = {47},
    note = {Id/No 51},
    year = {2026},
    language = {English},
    doi = {10.1007/s00029-026-01155-9},
    keywords = {14F06,14F08,14A20},
    zbMATH = {8211357}
}

@article{Lank:2024,
    title={Descent conditions for generation in derived categories},
    volume={228},
    ISSN={0022-4049},
    url={http://dx.doi.org/10.1016/j.jpaa.2024.107671},
    DOI={10.1016/j.jpaa.2024.107671},
    number={9},
    journal={Journal of Pure and Applied Algebra},
    publisher={Elsevier BV},
    author={Lank, Pat},
    year={2024},
    month=sep, pages={107671} 
}

@misc{Lank:2026,
    title={Perfect generation for regular algebraic stacks}, 
    author={Pat Lank},
    year={2026},
    url={https://arxiv.org/abs/2601.04053}, 
    eprint={2601.04053},
    archivePrefix={arXiv},
    howpublished	= {\href{https://arxiv.org/abs/2601.04053}{arXiv:2601.04053}},
    publisher     = {arXiv},
}

@Article{Kunz:1969,
    Author = {Kunz, Ernst},
    Title = {Characterizations of regular local rings of characteristic {{\(p\)}}},
    FJournal = {American Journal of Mathematics},
    Journal = {Am. J. Math.},
    ISSN = {0002-9327},
    Volume = {91},
    Pages = {772--784},
    Year = {1969},
    Language = {English},
    DOI = {10.2307/2373351},
    Keywords = {13H05,13H99},
    zbMATH = {3300626},
    Zbl = {0188.33702}
}

@article{Stevenson:2014a,
    AUTHOR = {Stevenson, Greg},
    TITLE = {Subcategories of singularity categories via tensor actions},
    JOURNAL = {Compos. Math.},
    FJOURNAL = {Compositio Mathematica},
    VOLUME = {150},
    YEAR = {2014},
    NUMBER = {2},
    PAGES = {229--272},
    ISSN = {0010-437X},
    MRCLASS = {14F05 (14B05 14M10 18E30)},
    MRNUMBER = {3177268},
    MRREVIEWER = {Pawel Sosna},
    DOI = {10.1112/S0010437X1300746X},
    URL = {https://doi.org/10.1112/S0010437X1300746X},
}

@article{DeDeyn/Lank/Lank/ManaliRahul/Venkatesh:2026,
    author = {{De Deyn}, Timothy and Lank, Pat and {Manali Rahul}, Kabeer and Venkatesh, Sridhar},
    title = {Measuring birational derived splinters},
    journal = {Bulletin of the London Mathematical Society},
    volume = {58},
    number = {5},
    pages = {e70362},
    doi = {https://doi.org/10.1112/blms.70362},
    url = {https://londmathsoc.onlinelibrary.wiley.com/doi/abs/10.1112/blms.70362},
    eprint = {https://londmathsoc.onlinelibrary.wiley.com/doi/pdf/10.1112/blms.70362},
    year = {2026}
}

@article{Letz:2021,
    author = {Letz, Janina C.},
    title = {Local to global principles for generation time over commutative {Noetherian} rings},
    fjournal = {Homology, Homotopy and Applications},
    journal = {Homology Homotopy Appl.},
    issn = {1532-0073},
    volume = {23},
    number = {2},
    pages = {165--182},
    year = {2021},
    language = {English},
    doi = {10.4310/HHA.2021.v23.n2.a10},
    keywords = {18G80,13D09,13B30,16E35},
    zbMATH = {7420249},
    Zbl = {1485.18021}
}

@misc{Bhatt/Blickle/Schwede/Tucker:2026,
    title={{{\(F\)}}-finite schemes have a dualizing complex},
    author={Bhargav Bhatt and Manuel Blickle and Karl Schwede and Kevin Tucker},
    year={2026},
    url={https://arxiv.org/abs/2604.20005},
    eprint={2604.20005},
    archivePrefix={arXiv},
    howpublished    = {\href{https://arxiv.org/abs/2604.20005}{arXiv:2604.20005}},
    publisher     = {arXiv},
}

@book{Olsson:2007,
    author = {Olsson, Martin C.},
    title = {Crystalline cohomology of algebraic stacks and {Hyodo}-{Kato} cohomology},
    fseries = {Ast{\'e}risque},
    series = {Ast{\'e}risque},
    issn = {0303-1179},
    volume = {316},
    isbn = {978-2-85629-249-5},
    year = {2007},
    publisher = {Paris: Soci{\'e}t{\'e} Math{\'e}matique de France},
    language = {English},
    keywords = {14F30,14F20,14G20,11G25},
    url = {smf4.emath.fr/Publications/Asterisque/2007/316/html/smf_ast_316.html},
    zbMATH = {5350468},
    Zbl = {1199.14006}
}

@misc{DeDeyn/Lank/ManaliRahul:2025,
    title={Descending strong generation in algebraic geometry},
    author={Timothy {De Deyn} and Pat Lank and Kabeer {Manali Rahul}},
    year={2025},
    url={https://arxiv.org/abs/2502.08629},
    eprint={2502.08629},
    archivePrefix={arXiv},
    howpublished    = {\href{https://arxiv.org/abs/2502.08629}{arXiv:2502.08629}},
    publisher     = {arXiv},
}

@article {Bondal/VandenBergh:2003,
    AUTHOR = {Bondal, Alexei and {V}an den {B}ergh, Michel},
    TITLE = {Generators and representability of functors in commutative and
    noncommutative geometry},
    JOURNAL = {Mosc. Math. J.},
    FJOURNAL = {Moscow Mathematical Journal},
    VOLUME = {3},
    YEAR = {2003},
    NUMBER = {1},
    PAGES = {1--36, 258},
    ISSN = {1609-3321,1609-4514},
    MRCLASS = {18E30 (14F05)},
    MRNUMBER = {1996800},
    MRREVIEWER = {Ioannis\ Emmanouil},
    DOI = {10.17323/1609-4514-2003-3-1-1-36},
    URL = {https://doi.org/10.17323/1609-4514-2003-3-1-1-36},
}

@Article{Neeman:1996,
    Author = {Neeman, Amnon},
    Title = {The {Grothendieck} duality theorem via {Bousfield}'s techniques and {Brown} representability},
    FJournal = {Journal of the American Mathematical Society},
    Journal = {J. Am. Math. Soc.},
    ISSN = {0894-0347},
    Volume = {9},
    Number = {1},
    Pages = {205--236},
    Year = {1996},
    Language = {English},
    DOI = {10.1090/S0894-0347-96-00174-9},
    Keywords = {14F20,14C35,14F05,55P42},
    zbMATH = {868352},
    Zbl = {0864.14008}
}

@misc{DeDeyn/Lank/ManaliRahul/Peng:2025,
    title={Categorical characterizations of regularity for algebraic stacks}, 
    author={Timothy De Deyn and Pat Lank and Kabeer Manali--Rahul and Fei Peng},
    year={2025},
    url={https://arxiv.org/abs/2504.02813},
    eprint={2504.02813},
    archivePrefix={arXiv},
    primaryClass={math.AG},
    howpublished    = {\href{https://arxiv.org/abs/2504.02813}{arXiv:2504.02813}},
    publisher     = {arXiv},
}

@article{Alper:2013,
    author = {Alper, Jarod},
    title = {Good moduli spaces for {Artin} stacks},
    fjournal = {Annales de l'Institut Fourier},
    journal = {Ann. Inst. Fourier},
    issn = {0373-0956},
    volume = {63},
    number = {6},
    pages = {2349--2402},
    year = {2013},
    language = {English},
    doi = {10.5802/aif.2833},
    keywords = {14L24,14L30,14J15},
    zbMATH = {6325437},
    Zbl = {1314.14095}
}

@article{Hall/Rydh:2019,
    author = {Hall, Jack and Rydh, David},
    title = {Coherent {Tannaka} duality and algebraicity of {Hom}-stacks},
    fjournal = {Algebra \& Number Theory},
    journal = {Algebra Number Theory},
    issn = {1937-0652},
    volume = {13},
    number = {7},
    pages = {1633--1675},
    year = {2019},
    language = {English},
    doi = {10.2140/ant.2019.13.1633},
    keywords = {14A20,14D23,18D10},
    url = {hdl.handle.net/10150/634911},
    zbMATH = {7110518},
    Zbl = {1423.14010}
}

@article{Rydh:2015,
    author = {Rydh, David},
    title = {Noetherian approximation of algebraic spaces and stacks},
    fjournal = {Journal of Algebra},
    journal = {J. Algebra},
    issn = {0021-8693},
    volume = {422},
    pages = {105--147},
    year = {2015},
    language = {English},
    doi = {10.1016/j.jalgebra.2014.09.012},
    keywords = {14A20},
    zbMATH = {6370246},
    Zbl = {1308.14006}
}

@article{Hall/Rydh:2018,
    author = {Hall, Jack and Rydh, David},
    title = {Addendum to: {\'E}tale d{\'e}vissage, descent and pushouts of stacks},
    fjournal = {Journal of Algebra},
    journal = {J. Algebra},
    issn = {0021-8693},
    volume = {498},
    pages = {398--412},
    year = {2018},
    language = {English},
    doi = {10.1016/j.jalgebra.2017.11.027},
    keywords = {14A20,14F20,18A30,18F20},
    zbMATH = {6834838},
    Zbl = {1441.14006}
}

@article{Hall/Rydh:2017,
    author = {Hall, Jack and Rydh, David},
    title = {Perfect complexes on algebraic stacks},
    fjournal = {Compositio Mathematica},
    journal = {Compos. Math.},
    issn = {0010-437X},
    volume = {153},
    number = {11},
    pages = {2318--2367},
    year = {2017},
    language = {English},
    doi = {10.1112/S0010437X17007394},
    keywords = {14F05,13D09,14A20,18E30},
    zbMATH = {6810455},
    Zbl = {1390.14057}
}

@article{Ballard/Iyengar/Lank/Mukhopadhyay/Pollitz:2023,
    author = {Ballard, Matthew R. and Iyengar, Srikanth B. and Lank, Pat and Mukhopadhyay, Alapan and Pollitz, Josh},
    title = {High {Frobenius} pushforwards generate the bounded derived category},
    fjournal = {Forum of Mathematics, Sigma},
    journal = {Forum Math. Sigma},
    issn = {2050-5094},
    volume = {14},
    pages = {29},
    note = {Id/No e12},
    year = {2026},
    language = {English},
    doi = {10.1017/fms.2025.10156},
    keywords = {14A30,13A35,14G17,13D09},
    zbMATH = {8155611}
}

@misc{Zhang:2024,
    title={Drinfeld's lemma for algebraic stacks}, 
    author={Zhang, Lei},
    year={2024},
    url={https://arxiv.org/abs/2408.02991},
    eprint={2408.02991},
    archivePrefix={arXiv},
    primaryClass={math.AG},
    howpublished    = {\href{https://arxiv.org/abs/2408.02991}{arXiv:2408.02991}},
    publisher     = {arXiv},
}

@article {Rydh:2011,
    AUTHOR = {Rydh, David},
    TITLE = {\'Etale d\'evissage, descent and pushouts of stacks},
    JOURNAL = {J. Algebra},
    FJOURNAL = {Journal of Algebra},
    VOLUME = {331},
    YEAR = {2011},
    PAGES = {194--223},
    ISSN = {0021-8693,1090-266X},
    MRCLASS = {14A20 (14F20 18F20)},
    MRNUMBER = {2774654},
    MRREVIEWER = {Hsian-Hua\ Tseng},
    DOI = {10.1016/j.jalgebra.2011.01.006},
    URL = {https://doi.org/10.1016/j.jalgebra.2011.01.006},
}

@misc{Hall:2022,
    title={Further remarks on derived categories of algebraic stacks}, 
    author={Hall, Jack},
    year={2022},
    url={https://arxiv.org/abs/2205.09312},
    eprint={2205.09312},
    archivePrefix={arXiv},
    primaryClass={math.AG},
    howpublished    = {\href{https://arxiv.org/abs/2205.09312}{arXiv:2205.09312}},
    publisher     = {arXiv},
}

@article {Abramovich/Olsson/Vistoli:2008,
    AUTHOR = {Abramovich, Dan and Olsson, Martin and Vistoli, Angelo},
     TITLE = {Tame stacks in positive characteristic},
   JOURNAL = {Ann. Inst. Fourier (Grenoble)},
  FJOURNAL = {Universit\'e{} de Grenoble. Annales de l'Institut Fourier},
    VOLUME = {58},
      YEAR = {2008},
    NUMBER = {4},
     PAGES = {1057--1091},
      ISSN = {0373-0956,1777-5310},
   MRCLASS = {14A20 (14L15)},
  MRNUMBER = {2427954},
MRREVIEWER = {Hsian-Hua\ Tseng},
       DOI = {10.5802/aif.2378},
       URL = {https://doi.org/10.5802/aif.2378},
}

@article {Hall/Rydh:2023,
    AUTHOR = {Hall, Jack and Rydh, David},
     TITLE = {Mayer-{V}ietoris squares in algebraic geometry},
   JOURNAL = {J. Lond. Math. Soc. (2)},
  FJOURNAL = {Journal of the London Mathematical Society. Second Series},
    VOLUME = {107},
      YEAR = {2023},
    NUMBER = {5},
     PAGES = {1583--1612},
      ISSN = {0024-6107,1469-7750},
   MRCLASS = {14A20 (13F40 14F08 14F20)},
  MRNUMBER = {4585296},
MRREVIEWER = {Ariyan\ Javanpeykar},
       DOI = {10.1112/jlms.12575},
       URL = {https://doi.org/10.1112/jlms.12575},
}

\end{document}